\tikzset{
	every loop/.style={very thick},
	comp/.style={circle,black,draw,thin,inner sep=0pt,minimum size=5pt,font=\tiny},
	order bottom left/.style={pos=.05,left,font=\tiny},
	order top left/.style={pos=.9,left,font=\tiny},
	order bottom right/.style={pos=.05,right,font=\tiny},
	order top right/.style={pos=.9,right,font=\tiny},
	order middle right/.style={draw,thin,shape=rectangle,inner sep=2pt,outer sep=5pt,pos=.5,right,font=\tiny},
	order middle left/.style={draw,thin,shape=rectangle,inner sep=2pt,outer sep=5pt,pos=.5,left,font=\tiny},
	order node dis/.style={text width=.75cm},
	circled number/.style={circle, draw, inner sep=0pt, minimum size=12pt},
	below left with distance/.style={below left,text height=10pt},
	below right with distance/.style={below right,text height=10pt}
}
\def\ww{node[red]{$\mathbf{\scriptstyle\times}$}}
\def\nn{node{$\scriptstyle\bullet$}}
\def\be{\begin{equation}}   \def\ee{\end{equation}}     \def\bes{\begin{equation*}}    \def\ees{\end{equation*}}
\def\ba{\begin{align}} \def\ea{\end{align}}
\def\bas{\bes\begin{aligned}}  \def\eas{\end{aligned}\ees}
\def\={\;=\;}  \def\+{\,+\,} 
\newcommand\PP{\mathbb P}
\newcommand\bC{\mathbb C}
\newcommand\C{\mathbb C}
\newcommand\R{\mathbb R}
\newcommand\Z{\mathbb Z}
\newcommand\bZ{\mathbb Z}
\newcommand{\bH}{\mathbb{H}}
\newcommand{\halfplane}{\mathbb{H}}
\newcommand{\chalfplane}{\overline{\mathbb{H}}}
\newcommand{\bk}{k}
\newcommand\cl{\mathcal}
\newcommand{\calD}{\cl D}
\newcommand{\calA}{\cl A}
\newcommand\pvd{\operatorname{pvd}}
\newcommand{\modules}{\operatorname{mod}}
\newcommand{\Mod}{\operatorname{Mod}}
\newcommand{\Hom}{\operatorname{Hom}}
\newcommand{\Aut}{\operatorname{Aut}}
\newcommand{\Ext}{\operatorname{Ext}}
\newcommand\rep{\operatorname{rep}}
\newcommand\ext{\operatorname{ext}}
\newcommand{\sph}{\operatorname{sph}}
\newcommand{\GL}{\operatorname{GL}}
\newcommand{\rk}{\operatorname{rank}}
\newcommand{\torsion}{\mathcal{T}}
\newcommand{\torsionfree}{\mathcal{F}}
\newcommand{\Exch}{\operatorname{Exch}}
\newcommand\CY{CY}
\newcommand\stab{\operatorname{Stab}}
\newcommand\Stab{\operatorname{Stab}}
\newcommand{\Sim}{\operatorname{Sim}}
\newcommand{\Qgrad}{\bar{Q}}
\newcommand\bra{\langle}
\newcommand\ket{\rangle}
\newcommand{\del}{\partial}
\renewcommand{\Re}{\operatorname{Re}}
\renewcommand{\Im}{\operatorname{Im}}
\makeatletter \@addtoreset{equation}{section} \makeatother
\theoremstyle{plain}
\newtheorem{theorem}{Theorem}[section]
\newtheorem{lemma}[theorem]{Lemma}
\newtheorem{prop}[theorem]{Proposition}
\newtheorem{cor}[theorem]{Corollary}
\theoremstyle{definition}
\newtheorem{definition}[theorem]{Definition}
\newtheorem{rmk}[theorem]{Remark}
\newtheorem*{rmk*}{Remark}
\newtheorem*{pb*}{Problem}
\def\be{\begin{equation}}
\def\ee{\end{equation}}
\def\bes{\begin{equation*}}
\def\ees{\end{equation*}}
\def\ba{\be\begin{aligned}}
\def\ea{\end{aligned}\ee}
\def\bas{\bes\begin{aligned}}
\def\eas{\end{aligned}\ees}
\def\w{\mathbf{w}}
\newcommand{\Tri}{{\w\equiv\mathbf{1}}}
\newcommand\surf{\mathbf{S}}  
\newcommand\surfo{{\mathbf{S}}_\Tri}  
\newcommand\sow{\surf_\w}  
\newcommand{\MM}{\mathbb{M}}
\newcommand{\EG}{\operatorname{EG}}
\DeclareDocumentCommand{\qmoduli}{ O{g} O{}}{{\mathrm{Quad}^{#2}_{#1}}}
\newcommand{\Quad}{\operatorname{Quad}}
\newcommand{\FQuad}{\operatorname{FQuad}}
\newcommand{\MCG}{MCG}
\DeclareMathAlphabet{\mathpzc}{OT1}{pzc}{m}{n}
\begin{document}


\title{Spaces of Bridgeland stability conditions in representation theory}
\titlemark{Spaces of Bridgeland stability conditions}

\emsauthor{1}{Anna Barbieri}{A.~Barbieri}

\emsaffil{1}{Universit\`a di Verona, Dipartimento di Informatica - Settore Matematica, 
Strada Le Grazie 15, 37134 Verona - Italy \email{anna.barbieri@univr.it}}

\begin{abstract}
The space of Bridgeland stability conditions is a complex manifold that can be attached to a triangulated category, of which it encodes some homological properties. These notes are an introduction to this topic, with a focus on examples from representation theory, and review the example of the Bridgeland--Smith correspondence for some quiver categories from marked surfaces.
\end{abstract}

\makecontribtitle


\renewcommand{\thesection}{\arabic{section}} 

\section{Introduction}
The notion of stability in the
context of algebra and geometry is traditionally
interpreted as a classification tool to gather objects
(the stable or semi-stable ones) in
well-behaved moduli spaces.

Stability conditions for triangulated
categories were first introduced by Tom Bridgeland at the
beginning of 2000 in \cite{BrStab}.
One of the
major features of this notion is that by definition it
incorporates the possibility of considering the set of all stability
conditions as a complex manifold, denoted by $\stab(\calD)$,
attached to a triangulated category $\calD$, of which it encodes some
homological properties.
Since its introduction, the space
$\stab(\calD)$ has played a role in algebraic geometry,
representation theory, mirror symmetry, and some branches of
mathematical physics, providing interesting synergies. While
these spaces are unknown in many cases, there are examples that are
quite well understood.

The goal of these notes
is to give an introduction to
spaces of stability conditions on triangulated
categories --with a view towards module categories.
As an example, we consider the space of stability conditions of
a class of three-Calabi--Yau
categories from quivers with potential, which are
well known
in representation and cluster theory.
Throughout the chapter, instead of giving entire proofs, we
try to emphasise the main ideas and ingredients or give
references.

The chapter is organised as follows. In Section \ref{sec_stab_def}, we recall the definition of Bridgeland
stability conditions as it is currently predominantly accepted. The space $\Stab(\calD)$ is introduced in Section \ref{sec_stab_mfd}, together with its main properties
as a topological and complex
manifold. We recall how the geometry of the space is controlled by bounded t-structures on the category, and we briefly mention some
research directions
that have received attention in the last decade,
concerned with the stability manifold itself.
Section \ref{sec_ginzburgexample} is aimed at reviewing the
computation, due to Bridgeland and Smith, of the space of stability conditions on
some quiver categories from marked surfaces, summarised in Theorem \ref{thm:BS15_iso}
as an isomorphism involving
moduli spaces of framed quadratic
differentials on a weighted marked surface. This example is the most familiar to the author, and it is used to see in practice some of the ingredients from Section \ref{sec_stab_mfd} and to give a hint on possible
fruitful interactions between Bridgeland stability conditions and other moduli problems.
The relevant categories and the necessary notions
of quadratic differentials from the theory of flat surfaces
are briefly recalled, for consistency.

The material presented in this article reflects the
research interests of the author, and there are
therefore many interesting aspects and
directions that are not covered or mentioned. These include, for instance, the problem of
constructing
stability conditions for derived categories of
varieties (for which surveys are nevertheless available), and applications to algebraic geometry;
enumerative theories and questions about defining moduli spaces of objects associated
with Bridgeland stability conditions; wall-crossing
phenomena in broad sense.


\section{Stability conditions}\label{sec_stab_def}

\subsection{Preliminary definitions}
We fix here some notation that will be used throughout the section
and the whole paper. $\bk$ is an algebraically closed field, usually $k=\C$, and
any category is additive, $\bk$-linear, and essentially small.
A short exact sequence (s.e.s.) $0\to A \to B \to C \to 0$ in an
abelian category is often represented as $A \to B \to C$,
while a distinguished triangle in a triangulated category is represented either as $A \to B \to C \to A[1]$ or as an usual triangle. If two objects are isomorphic, we say they belong to the same iso-class.
Given subcategories $\cl H_1,\cl H_2$ of an abelian or a
triangulated category~$\cl C$, and a set of objects $\cl B$, we define the following subcategories:
\begin{align*}
    \cl H_1*_{\cl C}\cl H_2&:=\big\{M\in\cl C\mid\text{$\exists$ s.e.s. (or triangle) }T\to M\to F(\to T[1]) \\
    &\phantom{:=M\in\cl C\mid\text{$\exists$ s.e.s. (or triangle)}T\to M \to}\text{s.t. $T\in\cl H_1, F\in\cl H_2$} \big\}\,,\\
    \cl B^{\perp_{\cl C}}&:=\big\{C\in\cl C:\Hom_{\cl C}(B,C)=0,\ \forall\,B\in \cl B\big\}\, \text{ and similarly }\,{^{\perp_\cl C}}\cl B\,,\\
    \cl H_1\perp_{\cl C}\cl H_2&:=\cl H_1*\cl H_2\,\, \text{ if }\,\cl H_2=\cl H_1^{\perp_{\cl C}}\, \text{ and }\, \cl H_1={^{\perp_{\cl C}}\cl H_2}\,.
\end{align*}

We denote by $\bra \cl B\ket_{\cl C}$ the closure under extensions and possibly shifts of $\operatorname{Add}\cl B$ in $\cl C$, and we  say that it is the subcategory \emph{generated} by $\cl B$.
We will usually omit the subscript~$\cl C$.

\begin{definition}\label{def_finiteheart}An abelian category is called of \emph{finite length} if, for any $E\in \calA$, there is a finite sequence $0=E_0\subset E_1\subset\dots\subset E_n =E$ such that all $E_i/E_{i-1}$ are simple. \\	
    It will be called \emph{finite} if, moreover, it has a finite number of iso-classes simple objects.
\end{definition}

We recall that the Grothendieck group of an abelian (resp., triangulated) category $ \cl C$ is
the group generated by the classes $[-]$ of isomorphism of objects in $\cl C$, modulo relations
induced by short exact sequences (resp., distinguished triangles):
\[A \to B \to C (\to A[1])\quad \text{implies}\quad [B]=[A]+[C].\]
It is denoted by $K(\cl C)$. It is easy to verify that $\left[A[-1]\right]= -\left[A\right]$ when $\cl C$ is triangulated. If
$\cl C$ is an abelian category and $A\in \cl C$, the class $-[A]$ is not represented
by any object in $\cl C$.

If a triangulated category $\calD$ is $\Hom$-finite, that is, for any $E,F\in\calD$, the vector space $\oplus_i\Hom_\calD(E,F[i])$ is finite dimensional, the Euler form $\chi:K(\calD)\times K(\calD) \to \bZ$ is defined by
\[\chi([E],[F])=\sum_i(-1)^i\dim \Hom_\calD(E,F[i]).
\]

The notion of a t-structure for a triangulated category was introduced in \cite{f-pervers} by A.~Beilinson, J.~Bernstein, P.~Deligne (t-category), and refined to the notion of a slicing in \cite{BrStab} by T.\ Bridgeland. We are interested here in \emph{bounded} t-structures, which are non-degenerate t-structures modelled on the decomposition of the bounded derived category $\calD^b(\calA)$ of an abelian category $\calA$ into objects with only non-positive non-zero cohomology $H^i(E)=0$, $i>0$, only non-negative non-zero cohomology $H^i(E)\neq 0$, $i<0$, and their extensions.

\begin{definition}
    A \emph{bounded t-structure} on a triangulated category $\calD$
    is defined by a full subcategory
    $\mathcal{L}\subset \calD$ (called the aisle), closed under shift  $\mathcal{L}[1]\subset\mathcal{L}$,
    such that
    \begin{align}
        \label{tstr1}
        \calD&=\cl L \perp \cl L^\perp,\quad \text{and moreover}\\
        \label{tstr2}
        \calD&=\bigcup_{i,j\in\Z}\mathcal{L}[i]\cap\mathcal{L}^{\perp}[j].
    \end{align}
    The \emph{heart} of a bounded t-structure $\mathcal{L}\subset\calD$ is the full subcategory $\calA=\mathcal{L}\cap\mathcal{L}^{\perp}[1]\subset \calD$.
\end{definition}
\begin{lemma}[{\cite[\S 1.3]{f-pervers}}] The heart of a bounded t-structure is an abelian category and it determines the bounded t-structure as the extension-closed subcategory generated by the
    subcategories $\calA[j]$ for integers $j\geq 0$.
\end{lemma}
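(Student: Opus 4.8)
The plan is to prove the two assertions --- that the heart $\calA = \mathcal{L}\cap\mathcal{L}^\perp[1]$ is abelian, and that it recovers $\mathcal{L}$ as $\bra \calA[j] : j\geq 0\ket$ --- essentially following the standard Beilinson--Bernstein--Deligne argument, which I would only sketch. First I would record the truncation functors: from \eqref{tstr1}, every object $E\in\calD$ fits into a triangle $\tau_{\leq 0}E \to E \to \tau_{\geq 1}E \to$ with $\tau_{\leq 0}E\in\mathcal{L}$ and $\tau_{\geq 1}E\in\mathcal{L}^\perp$, and the semiorthogonality $\Hom(\mathcal{L},\mathcal{L}^\perp)=0$ forces this triangle to be functorial and unique up to unique isomorphism. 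Shifting $\mathcal{L}$ gives truncations $\tau_{\leq n}, \tau_{\geq n}$ for every $n$, and then the cohomology functors $H^n := \tau_{\leq n}\tau_{\geq n}[n]\colon \calD \to \calA$. The boundedness condition \eqref{tstr2} guarantees that for each $E$ only finitely many $H^n(E)$ are nonzero, so these truncations terminate.

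Next I would check that $\calA$ is closed under the operations needed to be abelian. For a morphism $f\colon A\to B$ in $\calA$, complete it to a triangle $A\xrightarrow{f} B \to C \to A[1]$ in $\calD$; using that $A,B$ lie in $\mathcal{L}\cap\mathcal{L}^\perp[1]$ one shows $C$ has cohomology concentrated in degrees $0$ and $1$, i.e.\ $C$ sits in a triangle $H^0(C)\to C\to H^1(C)[-1]\to$ with both terms (suitably shifted) in $\calA$. One then defines $\ker f := H^0(C)[-1]$ and $\coker f := H^0(C)$ --- more precisely, the standard construction extracts $\operatorname{im} f$ and identifies the kernel and cokernel via the long exact cohomology sequence --- and verifies the axioms: $\Hom$-sets are abelian groups since $\calD$ is additive, finite products and coproducts exist (they are the ones from $\calD$, which land in $\calA$ because $\mathcal{L}$ and $\mathcal{L}^\perp[1]$ are closed under them), every mono is a kernel and every epi a cokernel, and the canonical map $\operatorname{coim} f \to \operatorname{im} f$ is an isomorphism. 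All of this is a routine diagram chase once the cohomology functors are in place, so I would not reproduce it in detail.

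For the second assertion, I would argue that $\mathcal{L}' := \bra \calA[j] : j\geq 0\ket$, the extension-closed subcategory generated by the non-negative shifts of $\calA$, equals $\mathcal{L}$. The inclusion $\mathcal{L}'\subseteq\mathcal{L}$ is immediate: $\calA\subseteq\mathcal{L}$, $\mathcal{L}$ is closed under positive shifts by hypothesis, and $\mathcal{L}$ is extension-closed because it is the left part of a semiorthogonal pair (if $X\to Y\to Z\to$ is a triangle with $X,Z\in\mathcal{L}$, then $\Hom(Y,\mathcal{L}^\perp)=0$ by the long exact sequence, so $Y\in{}^\perp(\mathcal{L}^\perp) = \mathcal{L}$). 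For the reverse inclusion, take $E\in\mathcal{L}$; by \eqref{tstr2} its cohomology objects $H^n(E)$ vanish for $n>0$, and iterating the truncation triangles $\tau_{\geq n}E \to \tau_{\geq n+1}E \to H^{n+1}(E)[-n-1]\to$ down from the top nonzero degree exhibits $E$ as a finite iterated extension of objects $H^{-i}(E)[i]$ with $i\geq 0$, hence $E\in\mathcal{L}'$.

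The main obstacle is the verification that $\calA$ is abelian --- specifically, showing that the candidate image of a morphism computed as a kernel of a cokernel agrees with the one computed as a cokernel of a kernel, which is where the axioms of a t-structure (in particular the uniqueness and functoriality of truncation, itself a consequence of the semiorthogonality $\Hom(\mathcal{L},\mathcal{L}^\perp)=0$) do the real work. Since this is precisely the content of \cite[\S 1.3]{f-pervers}, in these notes I would simply cite that reference and present only the construction of the truncation and cohomology functors together with the short argument for the second assertion above.
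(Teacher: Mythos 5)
The paper gives no proof of this lemma beyond the citation to \cite[\S 1.3]{f-pervers}, and your sketch is exactly the standard argument from that reference (truncation functors from the semiorthogonal decomposition \eqref{tstr1}, cohomology functors, kernel and cokernel of a morphism in $\calA$ read off from the cohomology of its cone, and the filtration by truncation triangles for the second assertion), so your approach matches the paper's. The one slip to fix is the identification of kernel and cokernel: you cannot have both $\ker f = H^0(C)[-1]$ and $\coker f = H^0(C)$ — in the normalisation where the cone $C$ of $f\colon A\to B$ has cohomology concentrated in degrees $-1$ and $0$, the long exact sequence gives $0\to H^{-1}(C)\to A\to B\to H^0(C)\to 0$, so $\ker f = H^{-1}(C)$ and $\coker f = H^0(C)$ (both honest objects of $\calA$, not shifts of them), and the corresponding truncation triangle is $\ker(f)[1]\to C\to\coker(f)\to\ker(f)[2]$.
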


In the rest of the text, we will therefore use interchangeably the notion of a bounded
t-structure or its heart. While it is clear that if $\cl L$ is a bounded t-structure, then $\cl L[n]$ is also a bounded t-structure for any integer $n$, we easily find t-structures that are not the shift of one another.  A typical example is provided by the bounded derived category of the representation of the $(n+1)$-th Beilinson's quiver $B_{n+1}$
\[\xymatrix{
    B_{n+1}\ar@{}[r]|=& \bullet_1 \ar@/^1pc/[rr]\ar@/_.8pc/[rr] & \vdots{\scriptscriptstyle{n+1}} &\bullet_1  \ar@{}[r]|{.\;.\;.\,}  &\bullet_{n} \ar@/^1pc/[rr]\ar@/_.8pc/[rr] & \vdots{\scriptscriptstyle{n+1}} &\bullet_{n+1}
}\]
which has $n+1$ vertices and $n+1$ arrows between any two consecutive vertices:
\[\rep(B_{n+1})\subset \calD^b(\rep(B_{n+1}))\simeq \calD^b(\C\PP^n)\supset \operatorname{Coh}\C\PP^n.\]
Indeed, any abelian category $\calA$ is the heart of
a bounded t-structure in its bounded derived category $\calD^b(\calA)$. In the example, $\rep(B_{n+1})$ is a finite heart, while the abelian category $\operatorname{Coh}(\C\PP^n)$ of coherent sheaves on the complex projective space $\C\PP^n$ is not.

A way of producing new t-structures, which are not necessarily standard in the sense above, is via tilting at a torsion pair.
\begin{definition}A \emph{torsion pair} in an abelian category $\cl H$ is a pair of subcategories $(\torsion, \torsionfree)$ such that $\cl H=\torsion\perp\torsionfree$. We call $\torsion$ the torsion class and $\torsionfree$ the torsion-free class.
\end{definition}
A torsion pair in the heart of a
bounded t-structure $\cl H$ in a $\calD$ defines new bounded t-structures with hearts
\[ \mu_{\torsionfree}^\sharp\cl H:= \torsion \perp_\calD \torsionfree[1], \quad \quad
\mu_{\torsion}^\flat\cl H:=\torsionfree\perp_\calD \torsion[-1].
\]
They are called the \emph{forward tilt} at $\torsionfree$
and the \emph{backward tilt} at $\torsion$, respectively, and are related by $\mu^\sharp_{\torsion[-1]} \mu^\flat_\torsion\cl H=\cl H$ and $\mu^\flat_{\torsionfree[-1]} \mu^\sharp_\torsionfree\cl H=\cl H$, \cite{tiltingbook}. When we tilt at a torsion(\mbox{-}free) class $\bra S\ket$
generated by a simple object $S\in \cl H$, we speak about a simple tilt
and we simplify the notation to
\[\mu^\sharp_S\cl H \quad \text{and} \quad \mu^\flat_S\cl H.\]
\begin{definition}
    The \emph{exchange graph} $\EG(\calD)$ of a triangulated category $\calD$ is the graph whose vertices are finite hearts of bounded t-structures on $\calD$ and whose arrows are either forward or backward simple tilts.
\end{definition}
For the purposes of these notes, we will usually consider forward
tilts for $\EG(\calD)$, though clearly this only affects the direction of the
arrows.

The group $\Aut(\calD)$ acts on $\EG(\calD)$. Since any autoequivalence commutes with the shift functor,
if $\Phi\in\Aut(\calD)$ and $\calA$ is the heart of a bounded t-structure, then
\begin{equation}\label{phi_mu}\Phi\left(\mu^{\flat/\sharp}_{\torsion/\torsionfree}(\calA)\right)
    \= \mu^{\flat/\sharp}_{\Phi(\torsion/\torsionfree)} \Phi(\calA).
\end{equation}

The following lemma characterises bounded t-structures of a triangulated category. The proof can be deduced from \cite[\S 1.3]{f-pervers} and is sketched below.

\begin{lemma}[{\cite[Lemma 3.2]{BrStab}}]\label{decomp_heart}
    Let $\calA\subset\calD$ be a full additive subcategory of a triangulated category $\calD$.
    Then $\calA$ is the heart of a bounded t-structure $\mathcal{L}\subset\calD$ if and
    only if the following two conditions hold:
    \begin{itemize}
        \item[(a)]if $k_1>k_2$ are integers, and $A$ and $B$ are objects of
        $\calA$, then
        \[\Hom_{\calD}(A[k_1],B[k_2])=0;\]
        \item[(b)]
        for every nonzero object $E\in\calD$, there is a finite
        sequence of integers
        \[k_1>k_2>\cdots>k_n\]
        and a collection of triangles
        \[\label{cohomology}
        \xymatrix@C=.5em{
            0_{\ } \ar@{=}[r] & E_0 \ar[rrrr] &&&& E_1 \ar[rrrr] \ar[dll] &&&& E_2
            \ar[rr] \ar[dll] && \ldots \ar[rr] && E_{n-1}
            \ar[rrrr] &&&& E_n \ar[dll] \ar@{=}[r] &  E_{\ } \\
            &&& A_1 \ar@{-->}[ull] &&&& A_2 \ar@{-->}[ull] &&&&&&&& A_n \ar@{-->}[ull]
        }
        \]
        with $A_j\in\calA[k_j]$ for all $j$.
    \end{itemize}
\end{lemma}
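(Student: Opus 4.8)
The statement is an equivalence, and the plan is to obtain the forward implication cheaply from the standard theory of t-structures and to concentrate the work on the converse.

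\textbf{From a t-structure to (a) and (b).} Suppose $\calA=\cl L\cap\cl L^{\perp}[1]$ for a bounded t-structure $\cl L$. For (a) I would unwind the inclusion $\cl L[1]\subseteq\cl L$: for $A,B\in\calA$ and $k_1>k_2$ one has $A[k_1-k_2]\in\cl L[k_1-k_2]\subseteq\cl L[1]$ while $B\in\cl L^{\perp}[1]$, so $\Hom_{\calD}(A[k_1-k_2],B)$ is a $\Hom$ from $\cl L[1]$ into $\cl L^{\perp}[1]$, hence vanishes by the definition of $\cl L\perp\cl L^{\perp}$ in \eqref{tstr1}; shifting back by $k_2$ gives (a). For (b) I would use the truncation functors and cohomology objects $H^{n}$ attached to $\cl L$, so that every nonzero $E$ sits in triangles $\tau^{\le n-1}E\to\tau^{\le n}E\to H^{n}(E)[-n]$ with $H^{n}(E)[-n]\in\calA[-n]$; condition \eqref{tstr2} forces $H^{n}(E)=0$ for all but finitely many $n$, and deleting the zero layers and reindexing the surviving cohomological degrees in decreasing order produces exactly the tower in (b).

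\textbf{From (a) and (b) to a t-structure.} Now assume (a) and (b). Guided by (b), define $\cl L$ to be the class of objects whose tower has all $k_j\ge 0$ (equivalently $\cl L=\bra\calA[j]\mid j\ge 0\ket$), and let $\cl F$ be the class of objects whose tower has all $k_j<0$ (so $\cl F=\bra\calA[j]\mid j<0\ket$). The crucial first step is a \emph{Harder--Narasimhan-type uniqueness}: for fixed nonzero $E$, the integers $k_1>\dots>k_n$ and the iso-classes $A_j\in\calA[k_j]$ in the tower of (b) are determined by $E$. This is where (a) does the real work: a morphism out of an object all of whose tower layers sit in degrees $\ge k$, into one all of whose layers sit in degrees $<k$, must vanish by dévissage from (a), so two competing towers for the same $E$ can be compared layer by layer --- matching the top layer first, peeling it off, and inducting on the number of layers --- exactly as in the usual uniqueness proof for Harder--Narasimhan filtrations. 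Uniqueness makes $\cl L$ and $\cl F$ strictly full subcategories and validates the descriptions used below.

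\textbf{Verifying the axioms and recovering the heart.} With uniqueness in hand the rest is bookkeeping. First, $\cl L[1]\subseteq\cl L$, since shifting adds $1$ to every $k_j$. Second, $\Hom_{\calD}(\cl L,\cl F)=0$: reducing along the two towers leaves $\Hom$'s between $\calA[j]$ with $j\ge 0$ and $\calA[k]$ with $k<0$, which vanish by (a); hence $\cl F\subseteq\cl L^{\perp}$. Third, $\calD=\cl L*\cl F$: given $E$ with tower $A_1,\dots,A_n$ in degrees $k_1>\dots>k_n$, take $m$ maximal with $k_m\ge 0$; then $E_m\in\cl L$, built by extensions from $A_1,\dots,A_m$, while the cone of $E_m\to E$ lies in $\cl F$ since its tower is the tail $A_{m+1},\dots,A_n$, giving a triangle $E_m\to E\to\operatorname{Cone}\to E_m[1]$ of the required shape. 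The equality $\cl L^{\perp}=\cl F$ then follows: for $X\in\cl L^{\perp}$ the decomposition $L\to X\to F$ has $\Hom_{\calD}(L,X)=0$, so $X$ is a direct summand of $F$ with complement $L[1]\in\cl L$; but $\Hom_{\calD}(L[1],F)=0$ forces $L[1]=0$, whence $X\cong F\in\cl F$. Thus $\calD=\cl L\perp\cl L^{\perp}$, which is \eqref{tstr1}, and \eqref{tstr2} holds because each $E$ has only finitely many layers, concentrated in a bounded range of degrees, so $E\in\cl L[i]\cap\cl L^{\perp}[j]$ for suitable $i,j$. Finally, the heart of $\cl L$ is $\cl L\cap\cl L^{\perp}[1]=\{E:\text{all }k_j\ge 0\}\cap\{E:\text{all }k_j\le 0\}=\{E:\text{single layer with }k_1=0\}=\calA$.

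\textbf{Main obstacle.} The technical heart is the Harder--Narasimhan-type uniqueness of the tower in (b): condition (a) is precisely the input that forces it, but organising the layer-by-layer comparison of two towers --- deciding which layer to match first, proving the vanishing of the cross morphisms, and setting up the induction --- is the only genuinely nontrivial argument; once it is in place, closure under shift, the semiorthogonal decomposition $\calD=\cl L\perp\cl L^{\perp}$, boundedness, and the identification of the heart all follow formally from the definitions of $*$ and $\perp$.
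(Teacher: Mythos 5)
Your proposal is correct and follows essentially the same route as the paper's sketch: the forward implication via the truncation functors of Beilinson--Bernstein--Deligne, and the converse by taking $\cl L=\bra\calA[i],\ i\geq 0\ket$ and $\cl G=\bra\calA[-i],\ i\geq 1\ket$ and cutting the tower of (b) at the last index with $k_m\geq 0$ to produce the decomposition triangle $E_m\to E\to G$. The only difference is one of detail: you make explicit the Harder--Narasimhan-type uniqueness of the tower and the verification that $\cl G=\cl L^{\perp}$ (via the splitting argument), which the paper asserts without proof and records separately in the remark following the lemma.
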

The objects $A_j$ appearing in \eqref{cohomology} are called the $k_j$-th \emph{cohomology} class of $E$ \emph{with respect to the bounded t-structure}. They are unique up to isomorphism \cite{BrStab}.
\begin{proof}
    For one direction, we consider $\cl L=\bra \calA[i],\, i\geq 0\ket_\calD\ket $ and
    $\cl G=\bra \calA[-i],\, i\geq 1\ket_\calD\ket$. By conditions (a) and (b), $\cl G = \cl L^\perp$.
    Let $E\in \calD$ and
    $m$ be the greatest integer among the $i$ in $\{1,\dots, n\}$ such that $k_i\geq 0$.
    Then the cone of the non-zero
    composite functor $E_m\to E$ lies in $\bra \calA[j], k_{m+1}\geq j\geq k_n \ket\subset\cl G$, and we have a decomposition $E_m \to E \to G\to E_m[1]$ with $G\in\cl L^\perp$.

    The other implication can be proved by using the truncation functors  $\tau_{\geq 0}, \tau_{\leq 0}$ and their shifts $\tau_{\geq k}$, $\tau_{\leq k}$, which are defined in \cite[\S 1.3, see in particular Propositions~\mbox{1.3.3--1.3.5}]{f-pervers}. Theorem~1.3.6 in op.\ cit.\ shows moreover that $H^k:=\tau_{\geq k} \tau_{\leq k}:\calD\to \calA$ is a cohomological functor. It associates $E\in \calD$ with the shifted subfactor $A[-k]\in\calA$ appearing in (b).

    Last, condition \eqref{tstr2} is equivalent to finiteness of the sequence of triangles appearing in (b).
\end{proof}

\begin{cor}\label{isogroth} If $\calA$ is the heart of a bounded t-structure on a triangulated category $\calD$, then there is an isomorphism of Grothendieck groups
    \[K(\calA)\simeq K(\calD).\]
\end{cor}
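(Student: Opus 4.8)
The plan is to show that the homomorphism $\iota_*\colon K(\calA)\to K(\calD)$ induced by the inclusion $\calA\hookrightarrow\calD$ is an isomorphism, by building an explicit inverse out of the cohomology functors of the t-structure. First one checks that $\iota_*$ is well defined: a short exact sequence $A\to B\to C$ in the abelian category $\calA$ gives a distinguished triangle $A\to B\to C\to A[1]$ in $\calD$ (standard for hearts, see \cite[\S1.3]{f-pervers}), so the relations defining $K(\calA)$ are sent to relations in $K(\calD)$. Surjectivity is then immediate from Lemma~\ref{decomp_heart}(b): for $E\in\calD$ the tower of triangles there gives $[E]=\sum_{j=1}^n[A_j]$ in $K(\calD)$ with $A_j\in\calA[k_j]$, and since $[A[1]]=-[A]$ we get $[E]=\sum_{j=1}^n(-1)^{k_j}[A_j[-k_j]]$ with $A_j[-k_j]\in\calA$, which lies in $\operatorname{im}\iota_*$.

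For the inverse, recall from the proof of Lemma~\ref{decomp_heart} that $H^k:=\tau_{\ge k}\tau_{\le k}\colon\calD\to\calA$ is a cohomological functor, and that by the boundedness condition \eqref{tstr2} one has $H^k(E)=0$ for all but finitely many $k$; moreover $H^k(E)$ is the shifted subfactor $A_j[-k]$ with $k_j=k$ in Lemma~\ref{decomp_heart}(b), so in the notation above $A_j[-k_j]=H^{k_j}(E)$. Define $\chi_\calA\colon K(\calD)\to K(\calA)$ by $[E]\mapsto\sum_{k\in\Z}(-1)^k[H^k(E)]$. To see that this respects the relations, note that a distinguished triangle $E'\to E\to E''\to E'[1]$ induces, by applying the cohomological property to its rotations, a long exact sequence $\cdots\to H^k(E')\to H^k(E)\to H^k(E'')\to H^{k+1}(E')\to\cdots$ in $\calA$ with only finitely many nonzero terms; splitting it into short exact sequences shows that the alternating sum of the classes of its terms vanishes in $K(\calA)$, which is exactly $\chi_\calA[E]=\chi_\calA[E']+\chi_\calA[E'']$.

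It remains to check that $\iota_*$ and $\chi_\calA$ are mutually inverse. For $A\in\calA$ we have $H^0(A)=A$ and $H^k(A)=0$ for $k\ne0$, so $\chi_\calA(\iota_*[A])=[A]$; hence $\chi_\calA\circ\iota_*=\mathrm{id}$. Conversely, the computation from the surjectivity step, rewritten using $A_j[-k_j]=H^{k_j}(E)$, reads $[E]=\sum_{k}(-1)^k[H^k(E)]$ in $K(\calD)$, which says precisely that $\iota_*(\chi_\calA[E])=[E]$, so $\iota_*\circ\chi_\calA=\mathrm{id}$.

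The only genuinely delicate point is the passage from a distinguished triangle to the long exact cohomology sequence in $\calA$, together with the vanishing of alternating sums of classes along a (finite) exact sequence; but this is the standard homological-algebra package attached to any bounded t-structure, available once we grant the cohomological functor $H^k$ from the proof of Lemma~\ref{decomp_heart}.
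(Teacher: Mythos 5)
Your proof is correct and takes essentially the same approach as the paper: the map induced by the inclusion $\calA\subset\calD$ in one direction, and the alternating sum $\sum_k(-1)^k[H^k(E)]$ of the cohomology objects from Lemma~\ref{decomp_heart}(b) as its inverse. You simply spell out the details (well-definedness of the inverse via the long exact cohomology sequence of a triangle) that the paper's one-line proof leaves implicit.
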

\begin{proof} Short exact sequences in $\calA$ are precisely the distinguished triangles in
    $\calD$ with three vertices in $\calA$. The map $K(\calA)\to K(\calD)$ is induced by the inclusion $\calA \subset \calD$, while its inverse sends $[E]_\calD$ to the alternate (finite) sum $\sum_{i\in \bZ}(-1)^{k_i}\big[A_i[-k_i]\big]_\calA$, for $A_i$ and $k_i$ defined in Lemma \ref{decomp_heart}, (b).
\end{proof}

\begin{definition}[{\cite[Definition 3.3]{BrStab}}]\label{def_slicing}A \emph{slicing}
    on the triangulated category $\calD$ is a family
    of full additive subcategories $\cl P:=\{\cl P(\phi)\}_{\phi\in\R}
    \subset \calD$ such that
    \begin{enumerate}
        \item[(a)] $\cl P(\phi+1)=\cl P(\phi)[1]$ for all $\phi\in\R$;
        \item[(b)] if $\phi_1>\phi_2$ and $A_j\in\cl P(\phi_j)$, $j=1,2$, then $\Hom_\calD(A_1,A_2)=0$;
        \item[(c)] for any non-zero object $E\in\calD$, there is a finite sequence of real numbers $\phi_1>\phi_2>\dots>\phi_m$ and a collection of distinguished triangles
        \[\xymatrix@C=1pc@R=1pc{ 0\ar@{}[r]|{=}&E_0 \ar[rr] && E_1 \ar[rr]\ar[dl] && E_2 \ar[r]\ar[dl] & \dots \ar[r] & E_{m-1} \ar[rr] && E_m \ar[dl] \ar@{}[r]|{=} & E\\
            & & A_1 \ar@{-->}[ul] && A_2 \ar@{-->}[ul] &&&& A_m\ar@{-->}[ul]
        }\]
        with $A_j\in\cl P(\phi_j)$ for all $j=1,\dots,m$.
    \end{enumerate}
\end{definition}
It is not required by definition that $\cl P(\phi)\neq \{0\}$ for all $\phi \in \R$, nor for the non-trivial slices to be dense in $\R$.

As in Lemma \ref{decomp_heart}, the decomposition of axiom (c) is unique up to isomorphism; hence
one can define $\phi_{\cl P}^+(E)=\phi_1$ and $\phi_{\cl P}^-(E)=\phi_n$. For any interval $I\subset \R$, Bridgeland defines
\[\cl P(I):=\bra \cl P(\phi)\mid \phi\in I\ket_\calD.\]
It coincides with the subcategory $\bra E\in\calD \mid \phi_{\cl P}^{\pm}(E)\in I\ket_\calD$.
\begin{lemma}Suppose $I=(0,1]$ and $\lambda\in I$. Then
    \begin{enumerate}
        \item $\cl P(I)$ is the heart of a bounded t-structure on $\calD$, and
        \item $\cl P((\lambda, 1])\perp\cl P((0,\lambda])$ is a torsion pair in $\cl P(I)$.
    \end{enumerate}
\end{lemma}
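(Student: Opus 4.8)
The plan is to establish part~(1) by verifying that $\calA:=\cl P((0,1])$ satisfies conditions~(a) and~(b) of Lemma~\ref{decomp_heart}, whence $\calA$ is automatically the heart of a bounded t-structure (in particular an abelian category); part~(2) is then deduced using this abelian structure. Throughout I write $\phi^\pm(E)$ for $\phi^\pm_{\cl P}(E)$, and I would first record the standard dévissage consequence of axiom~(b) of Definition~\ref{def_slicing}: for any $X,Y$ one has $\Hom_\calD(X,Y)=0$ whenever $\phi^-(X)>\phi^+(Y)$. This follows by a double induction on the lengths of the axiom-(c) filtrations of $X$ and of $Y$: peeling off the top factor of $X$ (or the bottom factor of $Y$) and applying the long exact $\Hom$-sequence of the corresponding triangle reduces to the case $X\in\cl P(\phi_1)$, $Y\in\cl P(\phi_2)$ with $\phi_1>\phi_2$, which is axiom~(b).

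For condition~(a) of Lemma~\ref{decomp_heart}: if $k_1>k_2$ are integers and $A,B\in\calA$, then axiom~(a) of the slicing gives $\phi^-(A[k_1])=\phi^-(A)+k_1>k_1\ge k_2+1\ge\phi^+(B)+k_2=\phi^+(B[k_2])$, so the dévissage vanishing applies. For condition~(b): given $0\neq E$, axiom~(c) produces reals $\phi_1>\dots>\phi_m$ and triangles with factors $A_j\in\cl P(\phi_j)$. Because the $\phi_j$ are strictly decreasing, the indices $j$ with $\phi_j$ in a fixed half-open unit interval $(k,k+1]$ form a contiguous block; composing the corresponding consecutive filtration triangles via the octahedral axiom yields a single triangle whose third term is an iterated extension of those $A_j$, hence lies in $\cl P((k,k+1])=\calA[k]$. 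The resulting coarser filtration of $E$ has exactly the form required by~(b), the relevant integers $k$ appearing in strictly decreasing order (again since phases decrease) and being finite in number. This proves~(1).

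For part~(2), set $\torsion=\cl P((\lambda,1])$ and $\torsionfree=\cl P((0,\lambda])$; both are full, additive, extension-closed subcategories of $\calA$, so it remains to check $\calA=\torsion*\torsionfree$ with $\torsionfree=\torsion^{\perp_\calA}$ and $\torsion={}^{\perp_\calA}\torsionfree$. The vanishing $\Hom_\calD(\torsion,\torsionfree)=0$ is once more dévissage, since $\phi^-(T)>\lambda\ge\phi^+(F)$ for $T\in\torsion$, $F\in\torsionfree$; in particular $\torsionfree\subseteq\torsion^{\perp_\calA}$. For the $*$-decomposition, take $E\in\calA$ (so all its axiom-(c) phases $\phi_j$ lie in $(0,1]$), let $r$ be the largest index with $\phi_r>\lambda$ (take $r=0$, $E_0=0$ if there is none), and compose the filtration triangles to obtain $E_r\to E\to Q\to E_r[1]$ with $E_r\in\cl P((\lambda,1])=\torsion$ and $Q$ an iterated extension of $A_{r+1},\dots,A_m$, hence $Q\in\cl P((0,\lambda])=\torsionfree$. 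As the three vertices lie in the heart $\calA$, this triangle is a short exact sequence $0\to E_r\to E\to Q\to0$ in $\calA$ (a triangle with its three vertices in a heart is short exact; cf.\ the proof of Corollary~\ref{isogroth}). Finally, if $M\in\calA$ satisfies $\Hom_\calD(\torsion,M)=0$, apply this decomposition to $M$, say $0\to T_M\to M\to F_M\to0$ with $T_M\in\torsion$, $F_M\in\torsionfree$: the monomorphism $T_M\hookrightarrow M$ is a morphism out of $T_M\in\torsion$, hence zero, so $T_M=0$ and $M\cong F_M\in\torsionfree$. Thus $\torsion^{\perp_\calA}\subseteq\torsionfree$, and with the earlier inclusion $\torsion^{\perp_\calA}=\torsionfree$. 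The identity $\torsion={}^{\perp_\calA}\torsionfree$ is proved symmetrically, using that the epimorphism $M\twoheadrightarrow F_M$ vanishes whenever $\Hom_\calD(M,\torsionfree)=0$.

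The step I expect to demand the most care is the regrouping in condition~(b): turning the fine filtration of axiom~(c) into an honest chain of distinguished triangles whose cofibres are the block objects sitting in the unit-interval slices $\calA[k]$ --- i.e.\ checking that the octahedral bookkeeping genuinely delivers a filtration of the shape demanded by Lemma~\ref{decomp_heart}(b). Everything else follows formally from the slicing axioms and the dévissage $\Hom$-vanishing.
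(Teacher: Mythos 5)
Your proposal is correct and takes essentially the same route as the paper, which only sketches the argument: both reduce part (1) to verifying conditions (a) and (b) of Lemma~\ref{decomp_heart} by regrouping the Harder--Narasimhan filtration into unit-length phase intervals (your explicit octahedral bookkeeping is exactly what the paper's appeal to truncation functors accomplishes), and both obtain part (2) from the analogous coarsening at the cut $\lambda$ together with the d\'evissage $\Hom$-vanishing. Nothing further is needed.
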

\begin{proof}Both statements follow from the definitions and the conditions of Lemma \ref{decomp_heart}, using a truncation functor.
\end{proof}
The result actually holds for any interval of length $1$. In particular $\cl P(I)$ is abelian if $I$ has length $1$. It is quasi-abelian if $I$ has length less than $1$, \cite{BrStab}.
We say that $\cl P((0,1])$ is the \emph{heart} of the slicing $\cl P$.

\subsection{Bridgeland stability conditions}

The notion of a stability condition on a triangulated category was
introduced  in \cite{BrStab}. The definitions given below (\ref{def_stab1} and \ref{def_stab2})
are the mostly used
currently, see also the series of papers by Bayer, Macr{\`\i, Stellari, and co-authors}. The equivalence of the two definitions is sketched in Theorem \ref{thm_equiv}, \cite[Proposition 5.3]{BrStab}. The differences with the original definition involve the support condition
and the possible dependence on a finite rank lattice.

We start with the preliminary definition of a stability function on an abelian category and of the Harder--Narasimhan condition.

\begin{definition}\label{stabfunct} Let $\calA$ be an abelian category and $Z\in \Hom(K(\calA),\C)$
    such that, for any $0\neq A\in\calA$,
    \[Z([A])\in \chalfplane:=\{re^{\pi i \theta}\in\R \mid r\in \R_{>0},\ 0<\theta\leq 1\}.\]
    We say $\frac{1}{\pi}\arg Z([A])$ is the \emph{phase} of $A$.
    \begin{enumerate}
        \item An object $A\in\calA$ is said to be \emph{$Z$-semistable} if, for any non-zero proper sub-object $B\hookrightarrow A$, we have $\frac{1}{\pi}\arg Z([B])\leq \frac{1}{\pi}\arg Z([A])$. It is called $Z$-\emph{stable} if the inequality holds strictly.
        \item $Z$ is said to be a \emph{stability function} if it satisfies the \emph{Harder--Narasimhan property}:
        for any $0\neq A\in\calA$, there is a finite chain of sub-objects
        \[0\simeq A_0\subset A_1\subset \dots \subset A_m=A\]
        whose quotients $F_j=A_j/A_{j-1}$ are $Z$-semistable of strictly decreasing phases.
    \end{enumerate}
\end{definition}

\smallskip\par
Let $\calD$ be a triangulated category. We fix a finite rank free lattice
$( \Lambda, \bra-,-\ket )$, i.e., a free abelian group $\Lambda$ equipped with an inner product
$\bra -,-\ket$, together with a surjective group homomorphism
$\nu:K(\calD)\twoheadrightarrow \Lambda$. If $\calD$ is $\Hom-$finite and $K(\calD)\simeq \bZ^{\oplus n}$, we take $\big(K(\calD), \chi(-,-)\big)\stackrel{id}{=}\big(\Lambda, \bra-,-\ket\big)$. In many cases, if $K(\calD)$ has not finite rank,  it is standard to consider central charges that factor through the numerical part, i.e., the quotient of $K(\calD)$ by the null space of the Euler form on $\calD$, or, for some $\calD=\calD^b(\operatorname{Coh}(X))$, the singular cohomology $H^*(X,\Z)$. In the next definition we use the isomorphism of Grothendieck groups of Corollary \ref{isogroth}.

\begin{definition}\label{def_stab1}
    A \emph{stability condition} $\sigma$ on $\calD$, \emph{supported} on the heart $\calA$, is a pair
    \[\sigma=(\calA,Z),\]
    consisting of the heart of a bounded t-structure $\calA$ on $\calD$,
    together with a \emph{stability function} $Z$ on $\calA$ that factors through $\Lambda$
    \[Z:K(\calA)\stackrel{\nu}{\twoheadrightarrow}\Lambda \rightarrow \C,\]
    satisfying
    the \emph{support property}: 
    there exists a norm $\|\cdot\|$ on $\Lambda\otimes \R$ and a constant $c\in\R_{>0}$ such that, for any $Z$-semistable $0\neq A\in \calA$, $|Z(A)|\geq c\|\nu[A]\|$.
    \par
    The homomorphism $Z$ is referred to as the \emph{central charge}.
\end{definition}

Note that, since $\Lambda\otimes \R$ is a finite dimensional vector space, all norms are equivalent; hence any definition depending on definition \ref{def_stab1} will not depend on the choice of the norm.

While the support property looks at a first glance somehow arbitrary and with a different
flavour compared with the rest of the definition, it is
crucial in order to define a topology on the set of all stability conditions.
It was introduced by M.~Kontsevich and Y.~Soibelman in \cite{KS},
where the authors also show that it can be equivalently expressed as the the condition for which there exists a quadratic form $Q:\Lambda\otimes_\Z\R \to \R$ such that
\begin{itemize}
    \item the kernel of $Z$ is negative definite with respect to $Q$, and
    \item  $Q(\nu[A])\geq 0$ for any $Z$-semistable object $A$.
\end{itemize}
Indeed, if, for $\alpha\in\Lambda\otimes_\Z\R$, one
writes $\|\alpha\|=\alpha\cdot \alpha$, then we can define
$Q(\alpha,\beta)=\sqrt{Z(\alpha)\overline{Z}(\beta)}- \alpha\cdot \beta$. On the other hand, given
$Q(\alpha,\alpha)$, one easily sees that $\|\alpha\|=|Z(\alpha)|-Q(\alpha)$ is a norm on
$K(\calA)$. This is most useful when it boils down to a Bogomolov--Gieseker type inequality (see, e.g., \cite{BMS}).
\smallskip\par
A Bridgeland stability condition on a triangulated category is also defined in terms of a slicing parametrising \lq\lq distinguished\rq\rq\ objects: the semistable ones. Note that here, as well as in Definition \ref{def_stab1}, we drop from the notation
the dependence of $\sigma$ on the choice of $(\Lambda,\nu)$.

\begin{definition}\label{def_stab2} Let $K(\calD)\stackrel{\nu}{\twoheadrightarrow}\Lambda$ as above.
    A \emph{stability condition} on $\calD$ is a pair
    \[\sigma \=(\cl P,Z),\]
    where $\cl P$ is a slicing on $\calD$ and $Z\in\Hom\left(K(\calD),\C\right)$
    is a group homomorphism that factors through $K(\calD)\stackrel{\nu}{\rightarrow}\Lambda$ and satisfies the support property and the following
    \emph{compatibility} condition: if $0\neq E\in\cl P(\phi)$, then there exists $m(E)\in\R_{>0}$ such
    that $Z([E])=m(E)\exp(i\pi\phi)$.
\end{definition}
The following definition is well-posed thanks to Lemma \ref{abelianPphi}.
\begin{definition} The non-zero objects $0\neq E \in\cl P(\phi)$ are said to be \emph{$\sigma$-semistable of phase $\phi$}, and the simples in $\cl P(\phi)$ are said to be \emph{$\sigma$-stable}.
\end{definition}

\begin{lemma}[{\cite[Lemma 5.2]{BrStab}}]\label{abelianPphi} If a slicing is compatible with a central charge, then any $\cl P(\phi)$ is an abelian category of finite length.
\end{lemma}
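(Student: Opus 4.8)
The plan is to realise $\cl P(\phi)$ as a Serre subcategory of an ambient abelian category, so that its abelian structure becomes formal, and then to extract finite length from the support property. Since $(\phi-1,\phi]$ has length one, the category $\calA:=\cl P((\phi-1,\phi])$ is the heart of a bounded t-structure (by the lemma quoted just above), hence abelian. Set $\zeta:=\exp(i\pi\phi)$. The only input from the central charge will be the following elementary remark: for $0\neq E\in\cl P(\psi)$ the compatibility condition gives $Z([E])=m(E)\exp(i\pi\psi)$ with $m(E)>0$, so that $\Im(\overline{\zeta}\,Z([E]))=m(E)\sin(\pi(\psi-\phi))$, which is $\le 0$ whenever $\psi\in(\phi-1,\phi]$ and vanishes exactly when $\psi=\phi$.

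First I would show that $\cl P(\phi)$ is closed in $\calA$ under extensions, sub-objects and quotients. Closure under extensions is immediate from the standard behaviour of Harder--Narasimhan data along a triangle: if $B\to A\to C$ with $B,C\in\cl P(\phi)$ then $\phi_{\cl P}^{+}(A)\le\max(\phi_{\cl P}^{+}(B),\phi_{\cl P}^{+}(C))=\phi$ and $\phi_{\cl P}^{-}(A)\ge\min(\phi_{\cl P}^{-}(B),\phi_{\cl P}^{-}(C))=\phi$, which forces $A\in\cl P(\phi)$. For sub-objects and quotients, take a short exact sequence $0\to B\to A\to C\to 0$ in $\calA$ with $A\in\cl P(\phi)$. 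Then $B,C\in\calA$, so all their Harder--Narasimhan factors have phases in $(\phi-1,\phi]$, and additivity of $Z$ along those filtrations together with $Z([A])=Z([B])+Z([C])$ gives $0=\Im(\overline{\zeta}\,Z([B]))+\Im(\overline{\zeta}\,Z([C]))$. By the remark this is a sum of two non-positive terms, so both vanish; again by the remark, every Harder--Narasimhan factor of $B$ and of $C$ then has phase exactly $\phi$, i.e.\ $B,C\in\cl P(\phi)$.

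Consequently $\cl P(\phi)$ is a Serre subcategory of the abelian category $\calA$ and is therefore itself abelian, with kernels, cokernels and short exact sequences computed in $\calA$. For finite length, fix $0\neq E\in\cl P(\phi)$ and any chain of sub-objects $0=E_0\subset E_1\subset\dots\subset E_n=E$ in $\cl P(\phi)$ with all inclusions proper; by the previous step this is also a chain in $\calA$, so telescoping gives $Z([E])=\sum_{i=1}^{n}Z([E_i/E_{i-1}])$ with each $E_i/E_{i-1}$ a non-zero object of $\cl P(\phi)$, hence a semistable object to which the support property applies, and with $\nu[E_i/E_{i-1}]\neq 0$ (otherwise $Z$ would vanish on it, contradicting positivity of $m$). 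Let $\mu_0:=\min\{\|v\|:0\neq v\in\Lambda\}>0$ (positive because $\Lambda$ is a finite-rank lattice, hence discrete in $\Lambda\otimes\R$) and let $c>0$ be the constant from the support property. Then $|Z([E_i/E_{i-1}])|\ge c\,\|\nu[E_i/E_{i-1}]\|\ge c\mu_0$, and since all the terms $Z([E_i/E_{i-1}])$ are positive real multiples of $\zeta$ they add without cancellation, so $|Z([E])|=\sum_{i=1}^{n}|Z([E_i/E_{i-1}])|\ge n\,c\mu_0$. Hence $n\le |Z([E])|/(c\mu_0)$: every chain of sub-objects of $E$ has bounded length, so $\cl P(\phi)$ is of finite length.

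The crux is the middle paragraph: for a bare slicing, $\cl P(\phi)$ is in general only quasi-abelian, and it is precisely compatibility with $Z$ — through the sign of $\Im(\overline{\zeta}\,Z(-))$ on Harder--Narasimhan factors supported in $(\phi-1,\phi]$ — that upgrades this to closure under sub-objects and quotients inside $\calA$, and hence to an abelian structure. After that, abelianness is purely categorical and the support property does nothing more than convert boundedness of the mass $|Z|$ into boundedness of composition-series length.
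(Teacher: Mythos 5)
Your proof is correct, and it follows the paper's overall strategy---embed $\cl P(\phi)$ into the heart $\calA=\cl P((\phi-1,\phi])$, prove a closure property there, and extract finite length from the support property---but the mechanism in the key closure step is genuinely different. The paper's sketch first establishes, by contradiction via the Hom-vanishing axiom of a slicing, the purely slicing-theoretic inequalities $\phi^+(E)\leq\phi^+(F)$ and $\phi^-(F)\leq\phi^-(G)$ for a triangle $E\to F\to G\to E[1]$ in $\calA$, and only afterwards invokes the compatibility condition to get closure under kernels and cokernels. You bypass the Hom-vanishing argument entirely: additivity of $Z$, together with the observation that $\Im(\overline{\zeta}\,Z(-))$ is non-positive on all of $\calA$ and vanishes exactly on $\cl P(\phi)$, shows in one stroke that $\cl P(\phi)$ is a Serre subcategory of $\calA$---a slightly stronger conclusion (closure under arbitrary subobjects and quotients, not merely kernels and cokernels of maps between objects of $\cl P(\phi)$) obtained with strictly less slicing theory. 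The trade-off is that the paper's intermediate phase inequalities are a reusable fact about bare slicings, valid without any central charge, whereas your argument is tied to the compatibility condition from the start. On the finite-length step you also supply details the paper only asserts: the support property alone does not suffice, and you correctly add the discreteness of the lattice $\Lambda$ (your $\mu_0>0$) and the fact that the $Z$-images of the subquotients all lie on the ray $\R_{>0}e^{i\pi\phi}$, so their masses add without cancellation and bound the length of any chain.
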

\begin{proof}One can prove that $\cl P(\phi)$ is abelian by showing that it is closed under
    kernels and cokernels inside the abelian category $\cl P((\phi-1,\phi])$. First show by contradiction that if
    $E \to F \to G \to E[1]$ is a distinguished triangle in $\cl P((\phi-1,\phi])$, then
    $\phi^+(E)\leq \phi^+(F)$ and $\phi^-(F)\leq \phi^-(G)$. Then use the compatibility condition.
    The finite length property is ensured by the support property of the central charge.
\end{proof}

\begin{theorem}\label{thm_equiv}Definition \ref{def_stab1} and Definition \ref{def_stab2} are equivalent. The $\sigma$-(semi)stable objects of $\calD$ are exactly the $Z$-(semi)stable objects of $\cl P((0,1])$ and all their shifts.
\end{theorem}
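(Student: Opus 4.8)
The plan is to show that the two definitions carry the same information by pivoting on the category $\cl P((0,1])$, which will simultaneously play the role of the heart $\calA$ in Definition~\ref{def_stab1} and of the length-$1$ window of the slicing $\cl P$ in Definition~\ref{def_stab2}; the crux is that the slicing's notion of semistability and the stability function's notion of $Z$-semistability agree on this window.

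\emph{From a slicing to a heart.} Given $(\cl P, Z)$ as in Definition~\ref{def_stab2}, set $\calA := \cl P((0,1])$; this is the heart of a bounded t-structure by the lemma above identifying $\cl P((0,1])$ as a heart, so by Corollary~\ref{isogroth} we may view $Z$ as a map $K(\calA)\to\C$ factoring through $\Lambda$. The compatibility condition gives $Z(\cl P(\phi))\subset\R_{>0}\exp(i\pi\phi)$ for $\phi\in(0,1]$, and since every object of $\calA$ admits a filtration with graded pieces in the slices $\cl P(\phi_j)$, $\phi_j\in(0,1]$ (axiom (c) of Definition~\ref{def_slicing}), additivity of $Z$ on triangles forces $Z([A])\in\chalfplane$ for all $0\neq A\in\calA$. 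Reading that same filtration inside $\calA$ yields the Harder--Narasimhan chain required by Definition~\ref{stabfunct}, once one knows that $\cl P(\phi)$ (for $\phi\in(0,1]$) is exactly the class of $Z$-semistable objects of $\calA$ of phase $\phi$: this uses Lemma~\ref{abelianPphi} to control phases in short exact sequences of $\calA$ whose middle term lies in a slice, together with axiom (b) of the slicing to exclude destabilising subobjects. The support property carries over verbatim once semistability has been matched.

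\emph{From a heart to a slicing.} Conversely, given $(\calA, Z)$ as in Definition~\ref{def_stab1}, define $\cl P(\phi)$, for $\phi\in(0,1]$, to be the full additive subcategory of $Z$-semistable objects of $\calA$ of phase $\phi$ (including $0$), and set $\cl P(\phi+n):=\cl P(\phi)[n]$ for $n\in\Z$. Axiom (a) of Definition~\ref{def_slicing} holds by construction. For axiom (c) I would work in two stages: the Harder--Narasimhan property of $Z$ filters any $0\neq A\in\calA$ by objects of $\cl P(\phi)$ with $\phi\in(0,1]$ strictly decreasing; then, for an arbitrary $0\neq E\in\calD$, I take the t-structure cohomology filtration of $E$ from Lemma~\ref{decomp_heart}(b), with pieces $A_j[-k_j]$ ($A_j\in\calA$, $k_1>\dots>k_n$), and refine each $A_j$ by its HN filtration. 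Splicing these gives the triangles of Definition~\ref{def_slicing}(c) with strictly decreasing phases, because the phases occurring in $A_j[-k_j]$ lie in $(k_j,k_j+1]$ and hence never interleave across distinct cohomological degrees. The compatibility condition is built in, and the support property of Definition~\ref{def_stab1} transports once semistability is matched as above.

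\emph{The main obstacle.} The step I expect to be hardest is axiom (b) of Definition~\ref{def_slicing}, the vanishing $\Hom_\calD(A_1,A_2)=0$ for $A_j\in\cl P(\phi_j)$ with $\phi_1>\phi_2$, together with the uniqueness of Harder--Narasimhan filtrations that it underlies. When $\phi_1,\phi_2$ lie in a common half-open interval of length $1$ this reduces to the classical statement that there are no nonzero maps from a higher-phase to a lower-phase semistable object in an abelian category, obtained by factoring a putative nonzero map through its image and comparing phases. The real work is the bookkeeping that reduces the general case to this one, using condition (a) of Lemma~\ref{decomp_heart} (Hom-vanishing between distinct cohomological degrees of the t-structure) and induction on the lengths of the HN filtrations of $A_1$ and $A_2$. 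Once (b) is established, uniqueness of the decompositions in Definitions~\ref{decomp_heart}(b) and~\ref{def_slicing}(c) follows formally, the two constructions above are checked to be mutually inverse, and the final sentence of the theorem --- that the $\sigma$-semistable, respectively $\sigma$-stable, objects of $\calD$ are precisely the $Z$-semistable, respectively $Z$-stable, objects of $\cl P((0,1])$ and their shifts --- is just the identification $\cl P(\phi)=\{Z\text{-semistable objects of }\calA\text{ of phase }\phi\}$ for $\phi\in(0,1]$ used throughout, propagated to all $\phi\in\R$ by the shift compatibility (a).
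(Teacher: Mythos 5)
Your proposal is correct and follows essentially the same route as the paper's (much briefer) sketch: from a heart, define $\cl P(\phi)$ as the $Z$-semistable objects of phase $\phi$ and extend by shifts; from a slicing, take $\calA=\cl P((0,1])$ and check that $Z$ restricts to a stability function there, with the identification of $\sigma$-semistables and $Z$-semistables mediating both directions. You supply more of the verification than the paper does (notably isolating axiom (b) and the splicing of HN filtrations into the t-structure filtration as the real work); the only blemish is a sign slip in writing the cohomology pieces as $A_j[-k_j]$ with $A_j\in\calA$ while asserting their phases lie in $(k_j,k_j+1]$ --- the pieces lie in $\calA[k_j]$ --- which does not affect the argument.
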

\begin{proof}[Sketch of the proof]
    If we have a stability condition $\sigma = (\cl A,Z)$ in the sense of Definition \ref{def_stab1}, then the collection $\cl P := \left\{\cl P(\phi), \phi\in\R\right\}$ defined by
    \[\cl P(\phi) := \{E  \in \calA,\ Z\text{-semistable in} \; \calA \; \text{of phase} \; \phi \}\cup \{\text{zeroes}\}, \quad 0<\phi\leq 1,\]
    and
    \[\cl P(\phi+1)=\cl P(\phi)[1], \]
    is a slicing on $\calD$, compatible with $Z$ regarded as a group homomorphism on $K(\calD)$.

    On the other hand,
    a slicing $\cl P$ defines a bounded t-structure $\calD_{> 0}:=\cl P\left((0,+\infty)\right)$ on $\calD$ with heart $\cl P\left( (0,1]\right)$, and $Z\in\Hom(K(\calD),\C)$ induces a stability function on $\cl P\left( (0,1]\right)$.
\end{proof}
From now on, with slight abuse of notation, we will write $Z(E)$ for $Z([E])$.
For any non-zero object $E$ in $\calD$, one
defines its \emph{mass} with respect to a chosen stability condition $\sigma=(\cl P, Z)$ as
\[m_{\sigma}(E)=\sum_j|Z(A_j)|\in\R_{>0}\,,\]
where the $A_j$ are the Harder--Narasimhan factors with respect to $\sigma$, i.e., the objects, unique up to isomorphism, appearing in Definition \ref{def_slicing}, c) for the underlying slicing. Note that the mass of an object cannot vanish, but the central charge can vanish.
If a non-zero object $X\in\calD$ is $\sigma$-semistable and belongs to $\cl P(\phi)$, then $\phi^+(X)=\phi^-(X) = \phi$, and $Z(X)=m_{\sigma}(X)\exp(\pi i \phi)$. Choosing $\arg z \in (0,2\pi]$ for any $z\in\C^*$ as a standard branch for the
logarithmic function, we always have that if $0\neq X\in \calD$ is $\sigma$-semistable of phase $\phi$, then $\phi-\frac{1}{\pi}\arg Z(X)\in\Z$ with $\phi=\frac{1}{\pi}\arg Z(X)$ if $X\in\cl P((0,1])$. This is true in particular for all the simple objects of the supporting heart.
\smallskip\par
The set of all Bridgeland stability conditions on a triangulated category for a fixed choice of $(\Lambda, \nu)$ is denoted by $\stab_{(\Lambda, \nu)}(\calD)$.
Even when a stability condition on a given triangulated category is known to exist, computing the whole $\stab_{(\Lambda, \nu)}(\calD)$ can be very hard.

\subsection{Stability functions}\label{sec:stab:examples}
The notion of Bridgeland stability conditions on a triangulated category was inspired by the
work of Douglas \cite{douglas1, douglas2} on $\Pi$-stability for D-branes, and, more in general,
by ideas from string theory. These ideas have driven part
of the mathematical research on the stability manifold since its definition.

On the other hand, Bridgeland stability provides the first example of stability conditions on a
triangulated category, and choosing a central charge on a heart appears like a natural generalisation of previously known notions of stability conditions
on an abelian category, their key property being the Harder--Narasimhan property. The typical example is slope stability, but it is not always true that stability in abelian sense can be promoted to Bridgeland stability.

\subsubsection*{Slope stability}

We consider slope stability defined by Alastair King for the abelian category of representations of quivers and module categories. Let us take $Q$ an acyclic finite quiver, and $\underline{V}=(V_i,f_\alpha)_{i,\alpha}$ a representation of $Q$. Fix $\underline{a}\in\bZ^{|Q^0|}$ such that $\sum_i a_i d_i=0$ for some dimension vector $\underline{d}$, and set
\[\mu_{\underline{a}}(\underline{V})=\frac{\sum_i a_i \dim V_i}{\sum_i \dim V_i}.\]
We say that a representation $\underline{V}$ is $\mu_{\underline{a}}$-semistable if $\mu_{\underline{a}}(\underline{V})= 0$ and, for any sub-represen\-tation $\underline{W}\subseteq\underline{V}$, $\mu_{\underline{a}}(\underline{W})\geq 0$. It is called $\mu_{\underline{a}}$-stable if the only sub-representations with $\mu_{\underline{a}}(\underline{W})=0$ are the trivial ones. The key result by King \cite{king} concerns the
existence of moduli spaces of $\mu_{\underline{a}}$-semistable
$Q$-representations of fixed dimension $\underline{d}$ as a
projective variety. It is done using GIT techniques.

In general, a slope function on $\rep(Q)$ is given by two additive functions $c:\rep(Q)\to\R$ and $r:\rep(Q)\to\R_{>0}$ as $\mu(\underline{V})=\frac{c(\underline{V})}{r(\underline{V})}$, and mimic the analogous notion by Mumford for vector bundles (where $c$ and $r$ are the degree, depending on the choice of a polarisation on $X$, and the rank, respectively), extended to the abelian category of coherent sheaves  over a curve $X$.

The slope function satisfies the Harder--Narasimhan property, i.e., for any $\underline{V}\in\rep Q$, there exist $\underline{F}^k=\underline{V}\supset  \underline{F}\supset \dots \supset \underline{F}^0=0$ such that
\[\underline{F}^j/\underline{F}^{j-1}\]
are semistable of decreasing slope.
Positivity and finiteness properties allow us to regard a slope function on $\rep(Q)$ as a stability function in the
sense of Definition \ref{stabfunct} by setting $Z_\mu(\underline{V})=-c(\underline{V})+i r(\underline{V})$. Similarly for coherent sheaves of pure dimension on a curve $X$, taking $\Lambda=H^0(X,\Z)\oplus H^2(X,\Z)$. Note, however, that, if
$X$ is not a curve, this argument doesn't work.

A systematic study of stability functions and the Harder--Narasimhan property in the abelian context was carried out by Rudakov. See, for example, \cite{rudakov} and subsequent papers.

\subsubsection*{Finite hearts}
A special case is that of finite abelian categories. Suppose $\calD$ has
a bounded t-structure with a finite heart $\cl H$.
Let $\Sim(\cl H)=\{[S_1],\dots, [S_n]\}$ be a maximal set of iso-classes of simple objects of $\cl H$. Then $K(\calD)\simeq \Z^{\oplus n}$, and any
group homomorphism $Z\in\Hom(K(\cl H),\C)$ such that $Z(S_i)\in\halfplane$,
automatically satisfies the Harder--Narasimhan
condition and the support property, and therefore is a stability function on
the heart $\cl H$. The non-trivial property is
the Harder--Narasimhan condition. See \cite[{Section 1}]{rudakov} or \cite[{Proposition 2.4}]{BrStab} for the proof under the weaker assumption that there are no infinite chains of subobjects (resp., quotients) with increasing (resp., decreasing) value of $\phi=\frac{1}{\pi}\arg Z$, whose key points are summarised below. The following hold:
\begin{itemize}
    \item[(i)] any simple object is $Z$-semistable, and since any descending
        chain of subobject  and any ascending chain of quotients
        stabilises, then, for any $0\neq E\in \cal H$, there exist a
        $Z$-semistable subobject $0\neq A\subset E$ with $\phi(A)\geq \phi(E)$
        and a $Z$-semistable quotient $E\twoheadrightarrow B\neq 0$ with
        $\phi(E)\geq \phi(B)$;
    \item[(ii)] (see-saw property) if $A\to E \to B$ is a short exact sequence,
        then $\phi(A)\geq \phi(E)$ if and only if $\phi(E)\geq \phi(B)$;
    \item[(iii)]  there is no non-zero map $A\to B'$ if $A,B'$ are semistable
        with $\phi(A)>\phi(B')$.
\end{itemize}
In the finite abelian category $\cl H$, any semistable object has trivial Harder--Narasimhan filtration, and we
can work inductively on the length of an object.
The strategy is to show that, for any $E\in\cl H$, there
exists a maximally destabilizing quotient, that is, $E\twoheadrightarrow B_E\neq 0$ with $\phi(E)\geq\phi(B_E)$
such that, for any semistable quotient $E\twoheadrightarrow B'\neq 0$, we have $\phi(B')\geq \phi(B_E)$, and the equality
implies that $E\twoheadrightarrow B'$ factors through
$E\twoheadrightarrow B'$. If such an object exists, it is semistable thanks to the second part of (i), and will play the role of the minimal phase quotient in the Harder--Narasimhan filtration.

If $E$ is not
$Z$-semistable, we take an arbitrary $Z$-semistable subobject $A$ with
$\phi(A)>\phi(E)$, and a short exact sequence $A\to E \to E'$.
Assuming the existence of a $Z$-semistable maximally destabilizing
object $B_{E'}$ of $E'$, and using (i)--(iii), we can show that
$B_{E'}$ is also a maximally destabilizing object for $E$.
Then the
nine
lemma and the see-saw property imply that
if $A'=\ker(E\to B_E)$ and $B_{A'}$ is a maximally destabiling
quotient of $A'$, then $\phi(B_{A'})>\phi(B_E)$. This allows to
construct a Harder--Narasimhan filtration.


\section{The stability manifold}\label{sec_stab_mfd}
Let $\calD$ be a $\Hom$-finite triangulated category, $\nu:K(\calD)\twoheadrightarrow \Lambda$ as in
Section~\ref{sec_stab_def}. The set of Bridgeland stability conditions
on $\calD$ factoring through $K(\calD)\stackrel{\nu}{\to}\Lambda$ here is denoted by
\[\stab(\calD)=\stab_{(\Lambda,\nu)}(\calD).\]
We remove the dependence on $(\Lambda,\nu)$ also from the notation for a single stability condition.

The main result in \cite{BrStab} is that $\stab(\calD)$ can
be given the structure of a complex manifold.
The goal of this section is to review the complex structure
and the main properties of the stability manifold, to provide
a few well-known examples, and to introduce some old and new
questions. For simplicity, and abusing
notation, we use the
expression \lq\lq central charge\rq\rq both for the map $Z:K(\calD) \to \C$ and for the induced map $\Lambda \to \C$.

\subsection{The complex structure}

The map $d: \stab(\calD)\times \stab(\calD)\to \R_{\geq 0}\cup \{+\infty\}$ defined by
\be\label{eq_metric} d(\sigma_1,\sigma_2)=
\sup_{0\neq E\in\calD}\left\{
|\phi_{\sigma_2}^-(E)-\phi_{\sigma_1}^-(E)|\,,\,
| \phi_{\sigma_2}^+(E)-\phi_{\sigma_1}^+(E)|\,,\,
\Big|\log\frac{m_{\sigma_2}(E)}{m_{\sigma_1}(E)}\Big|\right\}
\ee
is a generalised metric on $\stab(\calD)$, i.e., it satisfies the axiom of a metric space
except that it need not be finite \cite{BrStab}. We will loosely refer to it as a metric.  As a consequence, it defines a topology on $\stab(\calD)$
and induces a metric space structure on each connected component. We consider
$\stab(\calD)$ as endowed with the metric topology.
Equivalently, the topology is induced by the generalised metric
\[ d(\sigma_1,\sigma_2)= \sup_{0\neq E \in \calD} \left\{
|\phi_{\sigma_2}^-(E)-\phi_{\sigma_1}^-(E)|\,,\,
| \phi_{\sigma_2}^+(E)-\phi_{\sigma_1}^+(E)|\,,\,
\|Z_1-Z_2\|_{\Lambda_\C^*}
\right\},\]
where $\|W\|_{\Lambda_\C^*}$ denotes the operator norm on $\Hom_\Z(\Lambda,\C)$. It is easy to relate the use of the operator norm here with the support property of Definition \ref{def_stab1}, which can be rewritten as
\[ \inf\left\{\frac{|Z(E)|}{\|\nu[E]\|_{\Lambda_\R}}: 0\neq E \text{ semistable}\right\}>0.
\]
According to the metric $d$ defined above, the distance between two stability conditions
depends both on how \lq\lq different\rq\rq\ the central charges are, and how further
apart the hearts of the slicings are. For instance, if two stability conditions $\sigma_1=(\calA,Z)$ and $\sigma_2=(\calA[2n],Z)$ differ
by the choice of shifted hearts of bounded t-structures, then their distance is $2n$. On each connected component the generalised metric defined in \eqref{eq_metric} is finite
and complete  (see \cite{BMS,woolf2} for details).
Some metric properties of the stability manifold have been
studied by Woolf, \cite{woolf2}.

\begin{theorem}[{\cite[Theorem 1.2]{BrStab}, \cite{BMS}}]\label{thm_mnf_str}
    When not empty, the space $\stab(\calD)$ is a complex manifold of dimension $\rk \Lambda$,
    locally isomorphic to $\Hom_\bZ(\Lambda,\C)$ via the forgetful morphism
    \begin{equation}\label{forgetful_map}\cl Z: \sigma=(\cl P,Z)\mapsto Z.\end{equation}
\end{theorem}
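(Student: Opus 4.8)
The plan is to deduce everything from the assertion that the forgetful map $\cl Z$ of \eqref{forgetful_map} is, near every $\sigma\in\stab(\calD)$, a homeomorphism onto an open subset of the complex vector space $\Hom_\bZ(\Lambda,\C)\cong\C^{\rk\Lambda}$. Continuity of $\cl Z$ is immediate, since the equivalent form of the generalised metric recorded after \eqref{eq_metric} dominates $\|Z_1-Z_2\|_{\Lambda_\C^*}$, so $\cl Z$ is $1$-Lipschitz. Granting the local-homeomorphism statement, the pairs $\big(U_\sigma,\ \cl Z|_{U_\sigma}\big)$ --- with $U_\sigma$ the neighbourhoods produced below --- form a complex atlas: on an overlap both charts are literally $\cl Z$, so every transition map is the restriction of the identity of $\Hom_\bZ(\Lambda,\C)$, which is $\C$-linear and a fortiori holomorphic. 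Since the metric topology is Hausdorff, $\stab(\calD)$ is then a complex manifold of complex dimension $\rk\Lambda$ and $\cl Z$ is a local biholomorphism. So the content is entirely in the following deformation statement, due to Bridgeland in the locally-finite setting (\cite[\S 7]{BrStab}) and, with the support property, to \cite{BMS}.

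\emph{Deformation statement.} For every $\sigma=(\cl P,Z)\in\stab(\calD)$ there is $\varepsilon_0>0$ such that (i) any two stability conditions within distance $<\varepsilon_0$ of $\sigma$ having the same central charge coincide, and (ii) every $W\in\Hom_\bZ(\Lambda,\C)$ with $\|W-Z\|_{\Lambda_\C^*}$ small enough is the central charge of a stability condition $\tau$ with $d(\sigma,\tau)<\varepsilon_0$, varying continuously with $W$; by (i) such $\tau$ is unique.

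Part (i) is the softer half. If $\tau_1=(\cl P_1,W)$ and $\tau_2=(\cl P_2,W)$ both lie within $\varepsilon_0<1/8$ of $\sigma$, then $d(\tau_1,\tau_2)<1/4$, so an object $E$ that is $\tau_1$-semistable of phase $\phi$ lies in $\cl P_2\big((\phi-1/4,\phi+1/4)\big)$; comparing the Harder--Narasimhan filtration of $E$ with respect to $\tau_2$ with the constraint --- imposed by the compatibility condition of Definition \ref{def_stab2} --- that $W$ pins down the phase of each $\tau_2$-semistable factor modulo $2$, forces $E$ to be $\tau_2$-semistable of phase $\phi$; hence $\cl P_1=\cl P_2$ and $\tau_1=\tau_2$ (this is essentially \cite[Lemma~6.4]{BrStab}). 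I expect part (ii) to be the real work.

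For (ii) the support property of $\sigma$ is the crucial input. With constant $c$ and norm $\|\cdot\|$ as in Definition \ref{def_stab1}, pick $\varepsilon_0<1/4$ and impose $\|W-Z\|_{\Lambda_\C^*}<c\sin(\pi\varepsilon_0)$; then from $|W(E)-Z(E)|\le\|W-Z\|_{\Lambda_\C^*}\,\|\nu[E]\|<\sin(\pi\varepsilon_0)\,|Z(E)|$ for every $\sigma$-semistable $E$ one obtains a bound \emph{uniform in $E$}: passing from $Z$ to $W$ moves the phase of every $\sigma$-semistable object by less than $\varepsilon_0$. Using this, one builds the slicing $\cl Q$ by declaring $\cl Q(\phi)$ to consist of the objects of the quasi-abelian category $\cl P\big((\phi-\varepsilon_0,\phi+\varepsilon_0)\big)$ that are $W$-semistable there with $W$-phase exactly $\phi$, extended by $\cl Q(\phi+1)=\cl Q(\phi)[1]$. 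Checking the slicing axioms of Definition \ref{def_slicing} and the compatibility with $W$ comes down to refining, through these local $W$-Harder--Narasimhan filtrations, the (finite) $\sigma$-Harder--Narasimhan filtration of an arbitrary $0\neq E\in\calD$ and concatenating, the uniform phase bound ensuring that the pieces match up and that the resulting chain stays finite. The support property transfers to $\tau=(\cl Q,W)$: a $W$-semistable $E$ of phase $\phi$ has a $\sigma$-Harder--Narasimhan filtration with factors $E'$ satisfying $|Z(E')|\ge c\|\nu[E']\|$ and with $Z$-phases within $\varepsilon_0$ of $\phi$, so $|Z(E)|\ge c\cos(\pi\varepsilon_0)\sum\|\nu[E']\|\ge c\cos(\pi\varepsilon_0)\|\nu[E]\|$ and hence $|W(E)|\ge\big(c\cos(\pi\varepsilon_0)-\|W-Z\|_{\Lambda_\C^*}\big)\|\nu[E]\|$, a positive multiple of $\|\nu[E]\|$ because $\sin<\cos$ on $(0,\pi/4)$. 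That $d(\sigma,\tau)<\varepsilon_0$ and that $\tau$ depends continuously on $W$ drop out of the construction. The main obstacle is the bookkeeping in (ii) --- organising the local Harder--Narasimhan data into a genuine slicing and verifying all the axioms; what makes it work, and the reason the support property is built into the definition of a stability condition, is precisely that it converts a small operator-norm perturbation of $Z$ into a phase bound uniform over all semistable objects.
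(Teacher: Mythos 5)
Your proposal is correct and takes essentially the same route that the paper itself follows (by reference rather than in full): Bridgeland's deformation argument for the central charge from \cite[\S 7]{BrStab}, with the support property converting a small operator-norm perturbation of $Z$ into a phase bound uniform over all semistable objects, which is exactly how \cite{KS} and \cite{BMS} replace the original local-finiteness hypothesis. Your packaging of the charts as restrictions of $\cl Z$ (so that transition maps are identities) and the transfer of the support property to the deformed stability condition match the treatment in \cite[Appendix A]{BMS} and in \cite{bayer_short}, which are precisely the sources the paper points to for upgrading the local homeomorphism to a local isomorphism.
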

Theorem \ref{thm_mnf_str} means that it is enough to deform the central charge in order to
cover any small neighbourhood of a stability condition in $\stab(\calD)$. In fact, its proof is based on the
deformation properties of the central charge, proved in \cite[\S 7]{BrStab}, that,
in turn, are guaranteed by the support property.
Some remarks are due.
The original request, for the space to be well-behaved, was referred to as
\lq\lq local-finiteness\rq\rq\ (Definition 5.7 in
\cite{BrStab}). It is implied by the support property appearing in the
currently accepted definitions, see \cite{KS,BMS}.
The local homeomorphism $\cl Z$ of \eqref{forgetful_map} showed in \cite{BrStab} was promoted to a local isomorphism
in \cite[Appendix A]{BMS}.
An alternative proof of Theorem \ref{thm_mnf_str} is given in the recommended
paper \cite{bayer_short} by A.\ Bayer.

\smallskip\par
We restrict to a connected component of the space $\stab(\calD)$.

\begin{lemma}[{\cite[Corollary 5.2]{woolf2}}] If $\sigma_1=(\calA_1,Z_1)$ and $\sigma_2=(\calA_2,Z_2)$ are in the same connected component of $\stab(\calD)$, then $\calA_1$ and $\calA_2$ are related by a finite sequence of forward or backward tilts at some (possibly trivial) torsion pairs.
\end{lemma}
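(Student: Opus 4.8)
The plan is to connect $\sigma_1$ and $\sigma_2$ by a path inside their common connected component and to reduce the statement to a purely local claim: that the hearts of two \emph{sufficiently close} stability conditions differ by at most two tilts. By Theorem~\ref{thm_mnf_str} the component is a connected manifold, hence path-connected, and by the discussion following~\eqref{eq_metric} the generalised metric $d$ is finite on it; so we may fix a continuous path $\gamma\colon[0,1]\to\stab(\calD)$ with $\gamma(0)=\sigma_1$, $\gamma(1)=\sigma_2$, lying in that component. Since $[0,1]$ is compact, $\gamma$ is uniformly continuous into $(\stab(\calD),d)$, and we can choose a partition $0=t_0<\dots<t_N=1$ with $d\big(\gamma(t_{k-1}),\gamma(t_k)\big)<\tfrac12$ for every $k$. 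Granting the local claim, the hearts of $\gamma(t_{k-1})$ and $\gamma(t_k)$ are related by at most two tilts for each $k$, and composing these $N$ chains relates $\calA_1$ to $\calA_2$ by a finite sequence of forward and backward tilts, as required.

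For the local claim, let $\sigma=(\cl P,Z_\sigma)$ and $\tau=(\cl Q,Z_\tau)$ with $d(\sigma,\tau)<\tfrac12$, fix $\epsilon$ with $d(\sigma,\tau)<\epsilon<\tfrac12$, and write $\calA=\cl P((0,1])$, $\calB=\cl Q((0,1])$ for the two hearts. The first step is a slice estimate read straight off the definition of $d$: every nonzero $E$ satisfies $|\phi^{\pm}_\sigma(E)-\phi^{\pm}_\tau(E)|<\epsilon$, hence $\cl Q(\phi)\subseteq\cl P((\phi-\epsilon,\phi+\epsilon))$ for all $\phi$, and, passing to extension closures, $\calB\subseteq\cl P((-\epsilon,1+\epsilon))$. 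The second step is to route the passage from $\calA$ to $\calB$ through the intermediate heart $\calA':=\cl P((-\epsilon,1-\epsilon])$, which is the heart of a bounded t-structure because its defining interval has length $1$. A direct check shows $\calA'$ is the backward tilt $\mu^\flat_{\cl P((1-\epsilon,1])}\calA$ at the torsion pair $\big(\cl P((1-\epsilon,1]),\cl P((0,1-\epsilon])\big)$ of $\calA$ (use $\cl P((1-\epsilon,1])[-1]=\cl P((-\epsilon,0])$ and reassemble $(-\epsilon,1-\epsilon]=(0,1-\epsilon]\cup(-\epsilon,0]$). On the other hand $\bra\calA',\calA'[1]\ket=\cl P((-\epsilon,2-\epsilon])$, and since $\epsilon<\tfrac12$ forces $1+\epsilon\le 2-\epsilon$, we obtain $\calB\subseteq\cl P((-\epsilon,1+\epsilon))\subseteq\bra\calA',\calA'[1]\ket$.

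It then remains to invoke the classical characterisation of tilting: a heart of a bounded t-structure contained in $\bra\calA',\calA'[1]\ket$ is automatically the forward tilt of $\calA'$ at the torsion pair $\big(\calB\cap\calA',\,(\calB\cap\calA'[1])[-1]\big)$ of $\calA'$ (see \cite{tiltingbook}; proving this is where a short argument with truncation functors, rather than pure interval arithmetic, is needed). Combining, $\calB$ is obtained from $\calA$ by one backward tilt followed by one forward tilt, which establishes the local claim and hence the lemma. I expect this last structural input to be the only genuinely non-formal step; the point that needs care is that the threshold $\tfrac12$ cannot be lowered so as to force a \emph{single} tilt locally — a nearby heart can be simultaneously forward-tilted at one simple and backward-tilted at another — but two tilts per step is all the statement requires.
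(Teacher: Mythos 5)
Your proof is correct. Note that the paper does not actually prove this lemma: it is imported verbatim from \cite[Corollary 5.2]{woolf2}, so there is no internal argument to compare against. Your route --- path-connectedness of the component via Theorem \ref{thm_mnf_str}, uniform continuity of a path, and the local claim that $d(\sigma,\tau)<\tfrac12$ lets one pass from $\cl P((0,1])$ to $\cl Q((0,1])$ by the backward tilt to $\cl P((-\epsilon,1-\epsilon])$ followed by one forward tilt --- is essentially the argument behind the cited result, and all the interval bookkeeping checks out (the inclusion $\calB\subseteq\cl P((-\epsilon,1+\epsilon))\subseteq\bra\calA',\calA'[1]\ket$ does use $\epsilon<\tfrac12$ exactly as you say). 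The one external input, that a heart contained in $\bra\calA',\calA'[1]\ket$ is automatically the forward tilt of $\calA'$ at the torsion pair $\big(\calB\cap\calA',\,\calB[-1]\cap\calA'\big)$, is genuinely standard but is not quite what \cite{tiltingbook} proves (they give the forward direction, torsion pair $\Rightarrow$ tilted heart); the converse you need is the content of the results of \cite{woolf1} that the paper already invokes in Proposition \ref{prop:chamberStab}, so that is the cleaner reference.
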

This lemma becomes more concrete under some finiteness assumption on $\calD$.
Let $\cl H$ be the heart of a bounded t-structure on $\calD$. We denote by $\stab(\cl H)\subset\stab(\calD)$ the subset
consisting of stability conditions supported on
$\cl H$. Subsets $\stab(\cl H)$, as $\cl H$ varies, partition
$\stab(\calD)$. They need not be either open or closed. As
remarked in Subsection \ref{sec:stab:examples}, if $\cl H$ is
a finite heart, with
$\Sim(\cl H)=\{[S_1],\dots, [S_n]\}$, then any group homomorphism
$Z\in\Hom(K(\cl H),\C)$ such that $Z(S_i)\in\halfplane$
automatically satisfies the Harder--Narasimhan
condition and the support property.
Therefore $\stab(\cl H)\subset\stab(\calD)$ is isomorphic to $\bH^n$, and Proposition \ref{prop:chamberStab} below
describes how to \lq\lq glue\rq\rq\ such pieces.

\begin{prop}[{\cite[Section~5]{bridgeland_survey},
        \cite[Proposition~2.6,
        Corollary~2.10]{woolf1}}] \label{prop:chamberStab}
    Let $\calA_1$ be the heart of a bounded t-structure in $\calD$ and suppose that $\calA_1$ is finite. Let $S$ be a simple object in $\cl A_1$. If $\emptyset\neq\cl W_S\subset \stab(\cl A_1)$ is the real-codimension~$1$ locus for which a $S$ has phase $1$, and all other simples
    have phase in $(0,1)$, we have
    \[\stab(\cl A_1)\cap\overline{\stab(\cl A_2)}
    \= \cl W_S \Longleftrightarrow \cl A_2:=\mu^\flat_S\cl A_1.
    \]
    If $\cl W$ is the subset of $\stab(\cl A_1)$ of stability conditions such that
    $k$ simples $S_{i_1},\dots, S_{i_k}$ have phase 1 and the others have phase less
    than~1, we have
    \[\cl W\subseteq \stab(\cl A_1) \cap \overline{\stab(\cl A_2)} \Longleftrightarrow
    \cl A_2=\mu^\flat_\torsion\cl A_1
    \]
    for some torsion class $\torsion\subset\bra S_1,\dots, S_k\ket_{\cl A_1}$. The real
    dimension $\dim_\R \stab(\cl A_1) \cap \overline{\stab(\cl A_2)}$ is at least $k$.
\end{prop}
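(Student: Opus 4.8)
\emph{Proof plan.}
The plan is to coordinatise $\stab(\cl A_1)$, describe its boundary strata, and then analyse by a deformation argument how the slicing — and hence the supporting heart — degenerates as one reaches and crosses such a stratum. Since $\cl A_1$ is finite with $\Sim(\cl A_1)=\{[S_1],\dots,[S_n]\}$, the discussion of Subsection~\ref{sec:stab:examples} identifies $\stab(\cl A_1)$ with $\bH^n$, $\bH=\chalfplane$, via $\sigma=(\cl P,Z)\mapsto(Z(S_1),\dots,Z(S_n))$, and a simple $S_i$ acquires phase $1$ at $\sigma$ exactly when $Z(S_i)\in\R_{<0}$, a single codimension-$1$ condition inside $\bH$. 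Relabelling so that the distinguished simples are $S_1,\dots,S_k$ (with $S=S_1$ when $k=1$), the stratum $\cl W$ is thus a real submanifold of $\stab(\cl A_1)$ of codimension $k$ and real dimension $2n-k\ge k$; so once the inclusion $\cl W\subseteq\stab(\cl A_1)\cap\overline{\stab(\cl A_2)}$ is established, the dimension statement is immediate. At a point $\sigma\in\cl W$ I would first record the local picture of the slicing: $\cl P(1)=\bra S_1,\dots,S_k\ket_{\cl A_1}$ — every phase-$1$ semistable is built by extensions from phase-$1$ simple objects, and only $S_1,\dots,S_k$ among the $\sigma$-stable objects lie in $\cl P(1)$ — this extension closure is a Serre subcategory $\cl C$ of $\cl A_1$, and grouping the Harder--Narasimhan filtration of an object of $\cl A_1=\cl P((0,1])$ into its phase-$1$ and phase-$(0,1)$ parts exhibits $(\cl P(1),\cl P((0,1)))$ as a torsion pair of $\cl A_1$ with torsion class $\cl C$.

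The inclusion $\cl W\subseteq\stab(\cl A_1)\cap\overline{\stab(\mu^\flat_\torsion\cl A_1)}$ for a torsion class $\torsion\subseteq\cl C$ I would prove by an explicit deformation. Such a $\torsion$ is automatically a torsion class of $\cl A_1$, since $\cl C$ is at once a torsion and a torsion-free class and $\torsion$ is closed under $\cl A_1$-subquotients and extensions through $\cl C$; hence $\mu^\flat_\torsion\cl A_1=\torsionfree\perp\torsion[-1]$ is defined, is again a finite heart (with simple objects the $S[-1]$ for $S\in\Sim(\torsion)$, together with the mutations of the remaining simples of $\torsionfree$), and for $\torsion=\cl C$ it equals $\cl P([0,1))$. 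Now fix $\sigma\in\cl W$ and write $Z$ in the coordinates dual to $\Sim(\mu^\flat_\torsion\cl A_1)$: one checks that every simple of $\mu^\flat_\torsion\cl A_1$ is $\sigma$-semistable — the $\torsion[-1]$-ones of phase $0$ and the mutated $\torsionfree$-ones of some phase in $(0,1]$ — so that $Z$ lands in $\chalfplane$ on every such simple except on the $S[-1]$, $S\in\Sim(\torsion)$, which sit on the positive real axis. Perturb $Z$ to $Z_t$, $t\in(0,\varepsilon)$, by pushing only these finitely many values off the positive real axis into the open upper half-plane (say to phase $t$), keeping the other $\Sim(\mu^\flat_\torsion\cl A_1)$-coordinates fixed. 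Then $(\mu^\flat_\torsion\cl A_1,Z_t)$ is a genuine stability condition, $Z_t\to Z$, and since every simple of $\mu^\flat_\torsion\cl A_1$ is already $\sigma$-semistable a direct estimate with the metric $d$ shows $(\mu^\flat_\torsion\cl A_1,Z_t)\to\sigma$; alternatively this is forced by the deformation theorem underlying Theorem~\ref{thm_mnf_str}. Taking $k=1$ and $\torsion=\bra S\ket_{\cl A_1}$ gives $\cl W_S\subseteq\stab(\cl A_1)\cap\overline{\stab(\mu^\flat_S\cl A_1)}$.

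For the converse, suppose $\sigma\in\stab(\cl A_1)\cap\overline{\stab(\cl A_2)}$, with $\sigma_m\in\stab(\cl A_2)$ and $\sigma_m\to\sigma$. Continuity of phases in $d$ yields two constraints: every $0\ne E\in\cl A_2=\cl P_{\sigma_m}((0,1])$ has $\phi^{\pm}_{\sigma_m}(E)\in(0,1]$, hence $\phi^{\pm}_\sigma(E)\in[0,1]$ and $\cl A_2\subseteq\cl P_\sigma([0,1])$; and if $\phi^{\pm}_\sigma(E)\in(0,1)$ then the same holds along $\sigma_m$ for $m$ large, so $\cl P_\sigma((0,1))\subseteq\cl A_2$; moreover at least one simple of $\cl A_1$ must have $\sigma$-phase $1$, for otherwise $\sigma$ is an interior point of $\stab(\cl A_1)$, whose neighbourhood avoids the distinct chamber $\stab(\cl A_2)$. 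Standard tilting combinatorics — the hearts sandwiched between $\cl P_\sigma((0,1))$ and $\cl P_\sigma([0,1])$ are exactly the $\mu^\flat_\torsion\cl A_1$ with $\torsion\subseteq\cl P_\sigma(1)$, and $\torsion\mapsto\mu^\flat_\torsion\cl A_1$ is injective on torsion classes — then force $\cl A_2=\mu^\flat_\torsion\cl A_1$ for some torsion class $\torsion\subseteq\cl P_\sigma(1)=\bra S_{i_1},\dots,S_{i_k}\ket_{\cl A_1}$, the $S_{i_j}$ being precisely the phase-$1$ simples at $\sigma$; together with the previous paragraph this gives the biconditionals. For $k=1$ one has $\cl P_\sigma(1)=\bra S\ket_{\cl A_1}$, whose only nonzero torsion class is itself, whence $\cl A_2=\mu^\flat_S\cl A_1$.

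The delicate point, and the step I expect to cost the most, is to upgrade these inclusions to the stated \emph{equality} $\stab(\cl A_1)\cap\overline{\stab(\cl A_2)}=\cl W_S$ (respectively to the full correspondence in the $k$-simple case): this is really a statement about how the whole slicing $\cl P_\sigma$ degenerates along the boundary strata of $\stab(\cl A_1)$, where continuity of the central charge alone is insufficient and one needs the fine form of the deformation theorem of \cite[\S 7]{BrStab} — that the semistable subcategories of a small perturbation $\sigma'$ nest into $\cl P_\sigma$-intervals of width $O(d(\sigma,\sigma'))$ — to control exactly which hearts occur with $\sigma$ a genuine metric limit of $\stab(\cl A_2)$, and in particular that, generically along $\cl W_S$, only the codimension-$1$ datum of a single simple tilt survives. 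By comparison, the finiteness of $\mu^\flat_\torsion\cl A_1$, the $\sigma$-semistability of its simples, the description of the mutated simples, and the bookkeeping with the Serre subcategory $\cl C$ are routine.
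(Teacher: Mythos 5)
Your plan is essentially correct and considerably more detailed than what the paper records: the paper's entire proof is the remark that the argument ``is based on the $\bC$-action'', i.e.\ for $\sigma\in\cl W$ and small $\lambda$ with $\Re\lambda>0$ the rotated stability condition $(-\lambda).\sigma=(\mu^\flat_{\torsion_\lambda}\cl A_1,e^{\pi i\lambda}Z)$ with $\torsion_\lambda=\cl P_\sigma((1-\Re\lambda,1])$ lies at distance $\max\{|\Re\lambda|,\pi|\Im\lambda|\}$ from $\sigma$, which yields $\cl W\subseteq\overline{\stab(\mu^\flat_{\torsion}\cl A_1)}$ for the maximal class $\torsion=\cl P_\sigma(1)$ with no appeal to the deformation theorem. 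You instead perturb the central charge in coordinates dual to $\Sim(\mu^\flat_\torsion\cl A_1)$; this is what one needs anyway for the proper torsion subclasses $\torsion\subsetneq\cl P_\sigma(1)=\bra S_{i_1},\dots,S_{i_k}\ket_{\cl A_1}$, which the $\bC$-action alone does not reach, so your route is closer to the full argument in the cited references. Your identification of $\cl P_\sigma(1)$ with $\bra S_{i_1},\dots,S_{i_k}\ket_{\cl A_1}$ (a sum of elements of $\chalfplane$ lies on $\R_{<0}$ only if every summand does), the promotion of $\torsion$ to a torsion class of $\cl A_1$, and the limit argument for the converse (phases of objects of $\cl A_2$ pinched into $[0,1]$, the class $\cl A_1\cap\cl A_2[1]$ forced into $\cl P_\sigma(1)$, then the classification of intermediate hearts) are all sound; the only real debt is the metric convergence $(\mu^\flat_\torsion\cl A_1,Z_t)\to\sigma$, which, as you say, is exactly Bridgeland's deformation theorem.

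The one genuine problem is the final step you defer. The literal equality $\stab(\cl A_1)\cap\overline{\stab(\mu^\flat_S\cl A_1)}=\cl W_S$ is not something a finer deformation estimate will deliver, because it is incompatible with the second half of the proposition: taking $k=2$ and the torsion class $\torsion=\bra S\ket\subset\bra S,S'\ket$, the deeper stratum $\cl W_{\{S,S'\}}$, disjoint from $\cl W_S$ by definition, is also contained in $\stab(\cl A_1)\cap\overline{\stab(\mu^\flat_S\cl A_1)}$; one sees this concretely in the $A_2$ example at $Z(S_1)=Z(S_2)=-1$. What your two inclusions actually prove, and what the statement should say, is $\stab(\cl A_1)\cap\overline{\stab(\mu^\flat_S\cl A_1)}=\overline{\cl W_S}\cap\stab(\cl A_1)$, the locus where $S$ has phase $1$ and the remaining simples have phase in $(0,1]$. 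The paper hedges only in the $k$-simple case (``the inclusion of $\cl W$ need not be an equality''); the same caveat is needed for $k=1$, and you should state it rather than hope to close the gap with sharper estimates.
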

\begin{proof}The proof is based on the $\bC$-action defined below. The inclusion of $\cl W$ need not be an equality.
\end{proof}
The real-codimension 1 boundaries $\cl W_S$ of sets $\stab(\cl A)$ are sometimes called \emph{walls (of second type)}. The connected components of the complement of the closure of the union of walls in $\Stab(\calD)$ are often called \emph{chambers}.

Another type of \emph{wall and chamber} decomposition of
the stability manifold is given by so-called walls of
\emph{marginal stability}. They are the set
$\cl W_\alpha(\beta)$, where the central charge of
non-proportional classes $\alpha,\beta\in K(\calD)$
with non-trivial extension
satisfies $Z(\alpha)/Z(\beta)\in\R$. Along these walls,
phenomena of strict semistability may happen, and in fact,
they may identify regions of the stability manifold on
which the property of being stable of an object changes.

\subsection{Group actions}
A question is whether we can cover a whole
connected component of the stability space $\stab(\calD)$ by
starting at some known family of stability conditions and acting by a group. While
in general this is not true, there are several examples
where this strategy allows to compute
$\stab(\calD)$ or an appropriate quotient, usually $\stab(\calD)/G$ for a subgroup $G\subset\Aut(\calD)$.

There are two natural actions on $\stab(\calD)$ induced, respectively, by autoequivalences of the category and by the orientation-preserving transformation of $\bC$.

The group of autoequivalences $\Aut(\calD)$ acts
on $\stab(\calD)$ by isometries
\[\Phi.(\cl H, Z)= \left( \Phi(\cl H), Z\circ[\Phi]^{-1} \right),
\]
or, equivalently,
\[\Phi.(\cl P,Z)= \left( \{\Phi(\cl P(\phi))\}_{\phi\in\R},Z\circ[\Phi]^{-1}\right).
\]
Here $[\Phi]$ denotes the map induced by $\Phi\in\Aut(\calD)$ on the Grothendieck group
$K(\calD)$. Note that there is a special subgroup
$\Aut^0(\calD)\subset \Aut(\calD)$ consisting of auto-equivalences that induce the
identity on the Grothendieck group.
The $n$-th shift functor $[n]\in\Aut(\calD)$ acts by
\[ [n].(\cl H, Z)=(\cl H[n], (-1)^n Z).\]
The universal covering $\widetilde{\GL}^+(2,\R)$ of the group $\GL^+(2,\R)$ of $2\times 2$ matrices
with real entries and positive determinant, acts smoothly  on the right as follows.
We identify
\[\widetilde{\GL}^+(2,\R)=\left\{(f,T) \mid \begin{array}{l}
    f:\R \to \R \text{ increasing, } f(\phi+1)=f(\phi)+1, \\
    T: \R^2\to \R^2\in \GL^+(2,\R),\ T_{|\R^2/\R_{>0}} \equiv f_{|\R/2\Z}
\end{array} \right\},\]
and define the image of $\sigma=(\cl P,Z)$ under $(f,T)$ as
\[\left(\left\{\cl P(f(\phi))\right\}_{\phi\in\R} , T^{-1}\circ Z \right).\]
The $\Aut(\calD)$-action and the $\widetilde{\GL}^+(2,\R)$-action commute, and also commute with the free action by scalars
\begin{equation} \label{eq:Caction}
    \lambda.(\cl P,Z):=(\cl P', e^{-\pi i \lambda}Z), \quad \text{where }\cl P'(\phi)=\cl P(\phi + \Re \lambda)
\end{equation}
for $\lambda \in \bC$, which coincides with the action of $[n]\in \Aut(\calD)$ when $\lambda=n \in \bZ$.
Note that the $\bC$-orbits $\bC.\sigma=\{\lambda.\sigma\mid \lambda\in\bC\}$ are closed, and
the restriction of the metric $d$ to $\bC.\sigma$ is given by
\[d (\sigma, \lambda.\sigma)=\max\{|\Re\lambda|,\pi|\Im\lambda|\}.\]
The real part of $\lambda$ produces a modification that can be
pictorially described as a
\lq\lq rotation\rq\rq\ of the heart of the t-structure,
because $e^{-\pi i \lambda}Z$ identifies a different
distinguished half-plane
in the complex plane, or as a
translation of the heart of the slicing,
as it is affine on the set of
phases of semi-stable objects. In fact, we can regard $\C\subset \widetilde{\GL}^+(2,\R)$. The imaginary part of $\lambda$ is
responsible for the rescaling of
the central charge.
If $0<\Re\lambda\leq 1$, the $\bC$-action on a stability condition $\sigma$, represented according to Definition \ref{def_stab1} as $\sigma=(\calA, Z)$, gives
\[\begin{aligned}
    &\lambda.(\calA,Z)=(\mu^\sharp_{\torsionfree_\lambda}\calA, e^{-\pi i \lambda} Z),\\
    & (-\lambda).(\calA,Z)=(\mu^\flat_{\torsion_\lambda}\calA, e^{\pi i \lambda} Z),
\end{aligned}\]
where $\torsionfree_\lambda$ is the torsion-free class
\[\torsionfree_\lambda=\cl P((0,\Re\lambda])=\bra E\in\calA\mid E \text{ semistable}, \phi_\sigma(E)\leq \Re\lambda\ket,\]
and $\torsion_\lambda$ is the torsion class
\[\torsion_\lambda=\cl P((1-\Re\lambda,1])=\bra E\in\calA\mid E \text{ semistable}, \phi_\sigma(E)>\Re\lambda\ket.
\]
Note that the $\C$-action and the $\widetilde{\GL}^+(2,\R)$-action does not change the set of semistable objects, and the result on the slicing is
essentially a relabelling.
The space $\PP\stab(\calD):={\bC\backslash\stab(\calD)}$ is called the
\emph{projectivised stability space}. It is a (non-compact) complex projective manifold locally modelled
on $\PP\Hom(\Lambda,\C)$.

\subsection{Some questions regarding $\Stab(\calD)$}

Despite the attention that
Bridgeland stability conditions and the stability manifold
have attracted since their introductions, a general strategy
for constructing a stability structure
on
a triangulated category is not known yet. The definition problem appears usually when we deal with geometric categories, such as the bounded derived categories of complex varieties.
At the same time,
saying that the space $\stab(\calD)$ is empty might be even harder.

To my experience, there are two main research directions concerning
the stability manifold of a triangulated category from
representation theory.

\subsubsection*{Mirror symmetry} One direction arises in the context of
mirror symmetry and has to do with a theory of invariants counting
(in the appropriate sense) semi-stable objects, and with encoding
such invariants in some additional geometric structures, which are
analogous to other structures appearing in other moduli problems, especially Gromov--Witten theory.
These structures may involve pencils of isomonodromic connections on
the tangent bundle to the space $\stab(\calD)$, e.g.,
\cite{FGS_stabdata,barbieri2019frobenius,barbieri2019construction,tom_geomstr}. The enumerative theory associated
with Bridgeland stability conditions is often called
Donaldson--Thomas theory (in analogy with counting of sheaves
on a Calabi--Yau three-fold) or a theory of BPS indices (where this
notation is borrowed from physics). While such theories are not
completely developed yet, quiver categories provide examples to start with \cite{mozgovoy1}.

\subsubsection*{Classical questions}
The other direction has to do with computing the whole $\stab(\calD)$ and studying it as a topological and complex space. While this has a
more classical flavour, an interesting feature
of this complex space is that studying its topology
and geometry usually requires deep
understanding of bounded t-structures on $\calD$. Woolf, in the already mentioned paper \cite{woolf1}, explains relations between the topology
of $\Stab(\calD)$ and tilting, under suitable assumptions.

We usually
restrict to a connected component $\stab^\circ(\calD)$
of the space $\stab(\calD)$. We choose it
by
requiring that it contains stability conditions supported on a
fixed chosen heart of $\calD$.
In general, the space $\stab(\calD)$ might not be connected
(as shown in examples in \cite{sven} and in \cite{BMQS}), thought it is
often conjectured to be connected. Connectedness is proved in some
cases, e.g., for $\calD^b\left(\rep(\bullet\stackrel{n}{\Rightarrow}\bullet)\right)$ (including the derived category of $\C\PP^1$) in
\cite{macri_curves}, and for the derived categories of
coherent sheaves on the minimal resolutions of
$A_n$-singularities supported at exceptional sets (which also
admit a description in quivers terms) in
\cite{ishii2010stability}, to name some early examples.

Using homological tools and the study of bounded t-structures,
there are results about
simple connectedness and contractibility of connected components
of stability manifolds. Examples in this direction include \cite{pauksztello2018contractibility,adachi2019discreteness,QYW}.

On the other hand,
any non-empty connected component of a stability manifold is non-compact.
(Partial) compactifications of (a connected component of) the stability manifold or a quotient by the groups $\C$ or $\Aut(\calD)$ have recently been
proposed and studied in few classes of examples. In \cite{thurston_compact} the authors consider the closure of the image of an embedding of $\stab^\circ(\calD)/\C$ in a projective space, and define a \emph{Thurston compactification}. In \cite{uk_compact} and \cite{multiscale},
infinitesimal deformations of the mass function or the central charge are introduced in such a way that they induce stability conditions on appropriate triangulated quotients of $\calD$.
The two strategies lead to the notion of {\textit{lax}} \emph{stability conditions} and of \emph{multi-scale stability conditions}, respectively.

\subsection{Some examples}
We collect an incomplete list of references of computations of
stability manifolds, before focusing on one specific example
in the next section.

\subsubsection*{Stability conditions for geometric categories.} The stability
manifold of
$\mathcal{D}^b(\operatorname{Coh}C_g)$, where $C_g$ is a curve of genus
$g=0$ or
$g\geq 1$, was computed by Bridgeland and by Macr{\`\i} at very early
stages. They were followed by K3 surfaces (summarised in \cite{macri_ben}), and some Calabi--Yau three-folds, which are the natural target spaces of corresponding theories in
physics and the source of conjectures relating invariants
from different theories. Perhaps the most investigated Calabi--Yau three-fold was the quintic three-fold
$x_1^5+x_2^5+x_3^5+x_4^5+x_5^5\in\PP^4\C$,
which is an interesting variety from many points of view in mirror symmetry. It was completed only in 2018 in \cite{quintic} after great efforts.
Computing the stability manifold for the bounded derived category of a variety $X$ of dimension $3$ and higher
is complicated, see \cite{BMS, BLMS} and references in the introduction of \cite{liu}. A strategy to construct a stability condition by Bayer, Macr{\`\i}, Toda is
called tilt-stability \cite{BMT}. With this procedure, a weaker notion of stability is constructed on $\operatorname{Coh} X$, and deformed to induce an honest stability condition
on an appropriate heart.

\subsubsection*{Stability manifolds for derived and Calabi--Yau quiver categories.}
When we deal with quiver categories we can count on some amount of combinatorics,
and they therefore represent a good starting point for testing
conjectures related with Bridgeland theory. On the other hand, the study of their stability
manifold has sometimes revealed independently interesting features.
Some examples of
results concerning the stability manifold for quiver categories are
\cite{toms,ikeda,BS15,QYW,CW,caitlin}.

\subsubsection*{Stability manifolds related with finite-dimensional complex Lie algebras.}
Let $\mathfrak{g}_\Gamma$ be the complex Lie algebra associated with the
Dynkin quiver $\Gamma$. The spaces of stability conditions of certain
triangulated categories $\mathcal{D}_\Gamma$ associated with $\Gamma$ are
related with $\mathfrak{g}_\Gamma$ in a way that depends on the category and
its autoequivalences. The results involve isomorphisms between (a connected
component of)
a stability manifold and (the universal cover of) the quotient of a Cartan
subalgebra $\mathfrak{h}\subseteq\mathfrak{g}_\Gamma$
by a Weyl group. Some of these categories, and their stability manifolds, are
considered for
instance in \cite{kleinian,thomas,ishii2010stability,toms}. Beside this relation being interesting in itself and providing
different incarnations of the theory of Dynkin diagrams, it also provides an
example of stability manifolds enriched with additional and conjectured geometric
structure.


\section{An example: the stability manifold of {$\CY_3$} categories from surfaces}\label{sec_ginzburgexample}
In this section we review the description of (a connected component) of the stability manifolds of a class of triangulated categories, defined in \ref{sec:ginzburg} below, which
is known thanks to the Bridgeland--Smith correspondence relating stability conditions and a class of meromorphic quadratic differentials. The idea behind this section is to emphasise some tools that might be useful in order to describe the stability manifold, and some fruitful interaction between two apparently very different moduli problems.

The Bridgeland--Smith correspondence consists of two parts. The first concerns the construction of a triangulated category from a marked bordered Riemann surface and the study of its finite hearts. This is summarised in Subsection \ref{sec:ginzburg}. The second is the isomorphism between two complex spaces: the stability manifold and a space of meromorphic quadratic differentials with simple zeroes.  After the main theorem (Theorem \ref{thm:BS15_iso}), we state few consequences, concerning the moduli space of stability conditions and the moduli space of quadratic differentials, respectively. A small
explicit example is provided in Subsection \ref{sec:A2} to clarify the correspondence. The last subsection briefly mentions some generalisations of the Bridgeland--Smith correspondence to other classes of quadratic differentials and triangulated categories.

We need some preliminary definitions, which are summarised in
Subsection \ref{sec:prelim}.

\subsection{Preliminaries}\label{sec:prelim}

\subsubsection*{Quivers with potential and associated (dg) algebras}\label{subsec:QP}

We denote by $(Q,W)$ a finite (possibly disconnected) oriented quiver $(Q_0,Q_1,s,t)$ that has no loops or 2-cycles, with finite set of vertices $Q_0$, set of arrows $Q_1$, source and tail functions $s,t$, and with potential $W$.
The completion of the path algebra $k Q$ with respect to the bilateral ideal generated by arrows in $Q_1$ is denoted by $\widehat{k Q}$.  The lazy path (of length 0) at the vertex $j\in Q_0$ is denoted by $e_j$. The potential $W$ is a formal sum of cycles in
$\widehat{k Q}$, up to cyclic equivalence, i.e., $\alpha_1\alpha_2\cdots\alpha_m$ with $t(\alpha_i)=s(\alpha_{i+1})$ for $i\in\bZ/m\bZ$, is equivalent to $\alpha_2\cdots\alpha_m\alpha_1$.
The cyclic derivative with respect to an arrow $a\in Q_1$ is the unique $k$-linear map that takes a cycle of the form $c=uav$ with $u,v\in \widehat{kQ}$, to $vu\in\widehat{kQ}$, and a cycle not containing $a$ to $0$. By $\partial W$ we denote the ideal $\bra \del_a W \mid a\in Q_1\ket\subset \widehat{k Q}$.
In the examples we are most interested in, the potential involves all basic cycles, and the ideal $\del W$ is generated by monomials consisting of at least two letters. See \cite{DWZ} for these basic notions.

The \emph{Jacobian algebra} $\cl J(Q,W)$ of a quiver with potential is the quotient of the complete path algebra $\widehat{k Q}$ with respect to the ideal $\del W$.
We assume it is a finite dimensional algebra. The category of finitely generated modules over $\cl J(Q,W)$ is denoted by
\[\calA(Q,W):=\modules \cl J(Q,W),\]
or $\calA_Q$ for simplicity, and coincides with $\rep(Q,W)$,
the category of finite dimensional representations of $Q$ with
relations induced by the generators of $\del W$.
It is a finite-length, finite, abelian category; see, for instance, \cite[{Section 3}]{keller_dilog}.
The following example \eqref{qp_example} illustrates the relations induced by the potential and the resulting Jacobian algebra:
\begin{equation}\label{qp_example}
    \begin{aligned}
        &Q=\xymatrix{\bullet \ar[rr]^\alpha && \bullet \ar[dl]^\beta\\
            &\bullet \ar[ul]^\gamma
        }
        \qquad
        W=\alpha\beta\gamma,
        \\
        &\cl J(Q,W)=\widehat{kQ}/\bra \alpha\beta,\beta\gamma,\gamma\alpha\ket =  kQ/\bra \alpha\beta,\beta\gamma,\gamma\alpha\ket.
    \end{aligned}
\end{equation}
Part of the information of the category $\calA_Q$ is encoded in
the combinatorics of the quiver with potential. The finite set
of vertices $Q_0=\{1,\dots, n\}$ is in bijection with the set
$\Sim(\calA_Q)=\{[S_j]\,|\,j\in Q_0\}$ of (iso-classes of)
simple objects of $\calA_Q$. Their classes in the Grothendieck
group form a basis of primitive vectors of
\[K(\calA_Q)\simeq\Z^{|Q_0|}.\]
The dimension $\ext(S_i,S_j)$ of the extension group $\Ext_{\calA_Q}(S_i,S_j)$ is given by the number of arrows $q_{ij}$ from $i$ to $j$.
\medskip\par
A \emph{mutation~$\mu_i$} of $(Q,W)$ at a vertex $i$ is an operation that creates
a new quiver with potential $\mu_i(Q,W)=(\mu_i Q,\mu_i W)$ with the same set of
vertices. The new set of arrows $(\mu_i Q)_1$ is constructed from $Q_1$ as follows:
\begin{itemize}[nosep]
    \item[(1)] for any pair of arrows $a,b\in Q_1$, with $t(a)=i=s(b)$, add a new arrow $[ab]:s(a)\to t(b)$,
    \item[(2)] replace any arrow with source or target $i$ with the opposite arrow $a^*$,
    \item[(3)] remove any 2-cycle.
\end{itemize}
The new potential $\mu_i W$ is defined as $W'+W''$, where $W'$ is obtained by replacing $[ab]$ any composition $ab$ with $t(a)=i=s(b)$, and where $W''=\sum_{a,b}[ab]b^*a^*$. In example \eqref{qp_example}, the quiver with potential $(Q,W)$ is the mutation $\mu_2(A_3,0)$ of the linear oriented quiver $A_3=\bullet_1\to \bullet_2\to \bullet_3$ with trivial potential at vertex $2$.

A quiver with potential $(Q,W)$ is \emph{non-degenerate} if any quiver with potential obtained from $(Q,W)$ by iterated mutations
has no loops or 2-cycles. Given a non-degenerate quiver with
potential $(Q,W)$, we fix an integer $N\geq 3$. We assume that either $N=3$, or $N>3$ and $Q$ is acyclic.
\medskip\par
The $N$-th \emph{complete Ginzburg differentially graded (dg) algebra}
\[\Gamma_N(Q,W):=(\widehat{k \Qgrad},d)\]
is defined as follows \cite{ginzburg,K1}. First introduce the graded
quiver $\Qgrad$ with vertices $\Qgrad_0=Q_0$ and graded arrows:
\begin{itemize}[nosep]
    \item every $a:i\to j\in Q_1$, in degree 0,
    \item an opposite arrow $a^*:j\to i$ for any $a:i\to j\in Q_1$, in degree $-(N-2)$,
    \item a loop $e_i$ for any $i\in Q_0$, in degree $-(N-1)$.
\end{itemize}
Then the underlying graded algebra of $\Gamma$ is the completion $\widehat{k \Qgrad}$ of
the graded path algebra~$k\Qgrad$ with respect to the ideal
generated by the arrows of $\Qgrad$. Finally,
the differential $d$ of~$\Gamma$ is the unique continuous linear endomorphism,
homogeneous of degree~$1$, that satisfies the Leibniz rule and takes the
following values:
\[
da=0, \qquad d a^* \=  \partial_a W,
\qquad d e_i  \=  \sum_{a\in Q_1} \, e_i[a,a^*]e_i,
\]
where $e_i$ is the idempotent element at $i\in Q_0$ in $\bk Q$, i.e., $e_i^2=e_i$, and $e_i\gamma$ (resp., $\gamma e_i$) equals zero if $t(\gamma)\neq i$ (resp., $s(\gamma)\neq i$) and equals $\gamma$ otherwise.

\begin{rmk}\label{HGamma}The zero-th co-homology satisfies
    \[H^0\big(\Gamma_N(Q,W)\big)\simeq \cl J(Q,W).\]
\end{rmk}
\begin{proof}Assume $N=3$; then the arrows in $\bar{Q}$ are in degree 0 (original arrows), $-(N-2)=-1$ (opposite arrows), and $-(N-1)=-2$ (loops). By definition of $H^0=\frac{\ker d_0}{\Im d_1}$ we obtain that $H_0\big(\Gamma_N(Q,W)\big)= kQ/\del W$.
    If there is no potential, we simply note that $H_0(\Gamma_N(Q))=kQ$.
\end{proof}
\begin{definition} The \emph{perfectly valued derived category} of the dg algebra $\Gamma_N(Q,W)$ is the full triangulated subcategory of the (unbounded) derived category $\calD(\Gamma_N(Q,W))$ whose objects are dg modules of finite dimensional total cohomology. It is denoted by $\pvd(\Gamma_N(Q,W))$ or $\calD_{fd}(\Gamma_N(Q,W))$.
\end{definition}

Say $\Gamma_N:=\Gamma_N(Q,W)$. The graded quiver $\Qgrad$ is in fact the
$\Hom^\bullet$-quiver of $\calD(\Gamma_N)$ and
$\pvd(\Gamma_N)$, viewed as triangulated categories generated by the dg modules $e_i\Gamma_N$: the graded arrows
(with absolute value of the grading augmented by 1) $i\to j$ form a basis for
$\Hom^\bullet(e_i\Gamma_N, e_j\Gamma_N)$. The triangulated category $\pvd(\Gamma_N)$ is Calabi--Yau of dimension $N$, i.e., for any objects $E,F$,
there is a natural isomorphism  of $k$-vector spaces
$\Hom(E,F)\stackrel{\simeq}{\to}\Hom(F,E[N])^\vee$.

By Remark \ref{HGamma} and \cite{KY}, the derived category $\calD(\Gamma_N)$
has a t-structure with heart $\Mod\cl J(Q,W)$ that restricts to
$\pvd(\Gamma_N)$,
on which it defines a bounded t-structure with heart $\cl \modules\cl J(Q,W)$.

Quivers with potential that are related by a mutation at a vertex
define equivalent perfectly derived categories, so that we may
say that any quiver with potential which is mutation-equivalent
to $(Q,W)$ defines a bounded t-structure on $\pvd(\Gamma_N)$.
From this perspective, simple tilts with
respect to a simple object $S_i$ are a categorification of
mutations with respect to the $i$-th vertex at the level of
quivers.

\subsubsection*{Decorated marked surfaces} \label{subsec_dms}
We briefly
review here the definition of a weighted decorated
marked surface, which is an enhancement of the more classical
notion of a marked surface, via the choice of a set of internal points, each weighted by an integer.
For simplicity, we assume that there are no
internal marked points (\emph{punctures}) nor decorations of weight $-1,0$. Given a weighted decorated marked surface,
we can define a
mixed- or tri-angulation, to which a quiver with potential
will be attached.
The following description is far from exhaustive, and we
refer to \cite{LabaFragQP} for the original construction of
quivers with potential from triangulations of marked surfaces, to
\cite{qiu15} for the refinement to decorated marked
surfaces, and to \cite{BMQS} for the general definitions.

\begin{definition}
    A \emph{decorated marked surface} (without punctures) $(\surf,\MM, \Delta)$ consists of
    \begin{itemize}
        \item a connected differentiable Riemann surface $\surf$, with a fixed orientation and border $\partial\surf=\bigcup_{j=1}^b
        \partial_j$,
        \item a non-empty finite set
        $\MM$ of marked points on the boundary components, such that each connected component of
        $\partial\surf$ contains at least one marked point, and
        \item
        a non-empty finite set $\Delta= \{p_i\}_{i=1}^r$ of
        points in the  interior of~$\surf$.
    \end{itemize}
\end{definition}

Up to homeomorphism, $(\surf,\MM,\Delta)$ is determined by
the genus $g\geq 0$ of $\surf$, the number $b$ of boundary
components, the integer partition~$(M_j)_{j=1}^b$ of
the cardinality $|\MM|$ of $\MM$, where $M_j$
is the number of marked points on $\partial_j$, and the integer $r$.

A \emph{weight} function on $\Delta$ is a function
$\w\colon \Delta\to\Z_{\geq -1}$. Here we assume it takes values in $\Z_{\geq 1}$.
We say it is \emph{compatible} with $\surf$ and $\MM$ if
\begin{gather}\label{eq:cp1}
    \sum_{p\in\Delta} \w(p) - \sum_{j=1}^b(M_j+2) \=4g-4\,.
\end{gather}
If~$\w$, $\MM$, and $\surf$ are compatible, we will write $\sow$ for the class of $(\surf, \MM, \Delta, \w)$ up to diffeomorphism,
and call this tuple a \emph{weighted (decorated) marked surface}.
For simplicity, we will not distinguish between $\surf_\w$ and an underlying Riemann surface $\surf$.
\smallskip\par

We let $\sow^\circ:=\sow\setminus\partial\sow$.
An \emph{open arc} is an isotopy class of curves $\gamma\colon [0,1]\to\sow$
whose interior is in $\sow^\circ\setminus\Delta$ and whose endpoints
are in the set of marked points~$\MM$. An \emph{open arc system} $\{\gamma_i\}$ is a collection of open arcs on $\sow$ such that there is no \mbox{(self-)}intersection between any of them
in~$\sow^\circ\setminus\Delta$.

\begin{definition} A \emph{$\w$-mixed-angulation} is a maximal
    open arc system $\mathbb{A}$ which, \linebreak together with segments of the boundary components between any two marked points, tiles $\sow$ into polygons encircling a decoration of weight $w_i$ exactly if they have $w_i+2$ edges.
\end{definition}

The expression $\w$-\emph{mixed}-angulation insists on the fact that
polygons are allowed to have different shapes. The word
\emph{dissection} is used to indicate a similar maximal open
arc system, but in the context of classification of gentle
algebras (in particular the way a quiver is associated to a dissection, e.g., in \cite{palu}, looks different).
The simply decorated case, i.e., when $\w\equiv\mathbf{1}$, is studied, e.g., in \cite{qiubraid16,KQ2}. For this choice we write $\surfo$ for $\sow$. A triangulation~$\mathbb{T}$ of $\surf_\Tri$ is a $(\Tri)$-mixed-angulation, which in fact divides $\surf_\Tri$ into triangles, each containing exactly one decoration.
\medskip\par
The \emph{forward flip} at an arc $\gamma$ of
a $\w$-mixed-angulation is the operation that moves the
endpoints of $\gamma$ counter-clockwise along the adjacent open arcs of
the smallest polygon encircling $\stackrel{\circ}{\gamma}$ and
two decorations. The inverse operation is called
a \emph{backward flip}. These movements are \emph{relative to the decorations}, so that, for instance, performing twice a
forward flip at the same arc separating two triangles is not
the identity. They transform a $\w$-mixed-angulation into
another $\w$-mixed-angulation. See an example of a forward flip
of a triangulation of a simply decorated disc with five marked
points on the boundary in Figure \ref{fig_decorated_triang_A2}. The notion of forward and backward flips relative
to the decorations was originally proposed for triangulations
in \cite{KQ2}, and promoted to general $\w$ in \cite{BMQS}.

\begin{figure}[ht]
    \centering{
        \begin{tikzpicture}[scale=1.6]
            \useasboundingbox (-2.8,-1.3) rectangle (2.8,1.3);
            \begin{scope}[shift={(-2.5,0)}]
                \draw (0,0) circle (.6cm);
                \foreach \j in {0,...,4}{\draw (90+72*\j:.6cm) coordinate  (w\j);}
                \draw (.05,-.2) \ww;
                \draw (.4,0) \ww;
                \draw (-.3,.1) \ww;
                \draw[thick, blue] (w0) --  (w3);
                \draw[thick, teal] (w0) -- (w2);
                \foreach \j in {0,...,4}{\draw($(w\j)$)\nn;}
                \draw[teal] (-.3,-1) \nn ;
                \draw[blue] (.3,-1) \nn ;
                \draw[-stealth] (-.26,-1)--(.26,-1);
            \end{scope}

            \begin{scope}
                \draw (0,0) circle (.6cm);
                \foreach \j in {0,...,4}{\draw (90+72*\j:.6cm) coordinate  (w\j);}
                \draw (.05,-.2) \ww;
                \draw (.4,0) \ww;
                \draw (-.3,.1) \ww;
                \draw[thick, blue] (w0) --  (w3);
                \draw[thick, teal] (w1) .. controls (.5,.5) and (-.6,-.5) .. (w3);
                \foreach \j in {0,...,4}{\draw($(w\j)$)\nn;}
                \draw[teal] (-.3,-1) \nn ;
                \draw[blue] (.3,-1) \nn ;
                \draw[-stealth] (.26,-1)--(-.26,-1);
            \end{scope}

            \begin{scope}[shift={(2.5,0)}]
                \draw (0,0) circle (.6cm);
                \foreach \j in {0,...,4}{\draw (90+72*\j:.6cm) coordinate  (w\j);}
                \draw (.05,-.2) \ww;
                \draw (.4,0) \ww;
                \draw (-.3,.1) \ww;
                \draw[thick, blue] (w0) --  (w3);
                \draw[thick, teal] (w2) .. controls (-.55,2) and 
                (.56,-2.5) .. (w0);
                \foreach \j in {0,...,4}{\draw($(w\j)$)\nn;}
                \draw[teal] (-.3,-1) \nn ;
                \draw[blue] (.3,-1) \nn ;
                \draw[-stealth] (-.26,-1)--(.26,-1);
            \end{scope}

            \begin{scope}[shift={(-2.5,0)}]
                \draw[-stealth,thick,darkgray] (1,0) -- (1.5,0);
            \end{scope}
            \begin{scope}
                \draw[-stealth,thick,darkgray] (1,0) -- (1.5,0);
            \end{scope}
        \end{tikzpicture}
    }
    \caption{Examples of triangulations of a simply decorated marked surface of genus $0$ with one boundary component and five marked points on the boundary (disc with five marked points), and of forward flips of the green arc. The red crosses denote the decorations. In all configurations, the resulting quiver with potential is a linear oriented quiver $A_2=\bullet \rightarrow \bullet$ with trivial potential $W_{A_2}=0$. }\label{fig_decorated_triang_A2}
\end{figure}
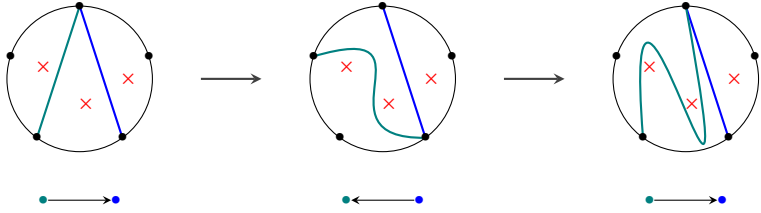

The \emph{exchange graph} $\Exch^\circ(\surf_\w)$ of a
weighted decorated marked surface is the directed graph whose
vertices are $\w$-mixed-angulations of $\surf_\w$ and whose
oriented edges are forward flips between them, containing a
vertex corresponding to a fixed initial triangulation.
It is an infinite graph.
\medskip
\par
Given a $\w$-mixed-angulation, we can define a quiver with
potential with the procedure described in Definition
\ref{quiver_from_mixedangulation} below. Note that there
are in fact different ways of constructing a quiver
(cf.\ \cite{BS15,nick,palu}).
\begin{definition}\label{quiver_from_mixedangulation}
    Given a $\w$-mixed-angulation $\mathbb{A}$ of a simply decorated marked surface, we associate to $\mathbb{A}$ a quiver with potential $(Q_\mathbb{A},W_\mathbb{A})$ with the following procedure:
    \begin{itemize} [nosep]
        \item the vertices of $Q_\mathbb{A}$ correspond to the open arcs in $\mathbb{A}$;
        \item the arrows of $Q_\mathbb{A}$ correspond to (clockwise) oriented intersection (at the endpoints) between open arcs in $\mathbb{A}$, so that there is a cycle in $Q_\mathbb{A}$ locally in each internal polygon;
        \item the potential $W_\mathbb{A}$ is the sum of all cycles that locally come from a polygon of $\mathbb{A}$ as above.
    \end{itemize}
\end{definition}

Note that the quiver $Q_{\mathbb{A}}$ has no loops nor two-cycles.

\subsubsection*{Spaces of quadratic differentials on marked surfaces}\label{sec:appendix:qdiff}
Let $\surf_g$ be a compact Riemann surface of genus $g$ and let
$\omega_{\surf_g}$ be its holomorphic cotangent bundle. A
\emph{meromorphic quadratic differential} $\Psi$ on $\surf_g$
is a meromorphic section of the line bundle
$\omega_{\surf_g}^2$. In a local complex coordinate $z$ on $\surf_g$,
it can be expressed as $\Psi(z)=f(z)dz\otimes dz$ for some
meromorphic function $f$. A meromorphic quadratic
differential $\Psi$ on $\surf_g$ has degree $4g-4$, which means that, if $p_i$ denotes the zeroes of $\Psi$ and
$q_j$ its poles, then
$\sum\operatorname{ord}_{\Psi}(p_i)-\sum \operatorname{ord}_{\Psi}(q_j)=4g-4$.
The book by Strebel \cite{strebel} is probably the best
reference for the
theory of quadratic differentials. We refer
nevertheless to \cite[Sections 2, 3, 4]{BS15} or
to \cite[Sections 3, 4]{BMQS} and references therein for
the main definitions  and for a quick introduction to the
moduli spaces of quadratic differentials appearing in this
survey, as well as for useful and explanatory pictures. We
refer to \cite{BS15} and \cite{KQ2} for (decorated) marked
Riemann surfaces and triangulations associated to a meromorphic
quadratic differential. We recall the most relevant notions
here in a rather heuristic way.
\begin{itemize}
    \item The critical profile of a meromorphic quadratic
        differential is the order vector of zeroes and poles
        $\big(\operatorname{ord}_\Psi(p_i), -\operatorname{ord}_\Psi(q_j)\big)_{i,j}$. We assume here that
        $\operatorname{ord}_\Psi(q_j)\geq 3$ for all $j$.
    \item Let $\mathbf{w}=(w_i)_{i=1}^r$ and $\mathbf{m}=(m_j)_{j=1}^b$ be t-uples of integers, with $w_i,m_j>0$. The moduli space of
        quadratic differentials (considered up to isomorphism) on a compact
        Riemann surface of
        genus $g$ and with critical profile $(\w,-\mathbf{m})$ is denoted by $\Quad_g(\w,-\mathbf{m})$.
    \item The standard double cover $(\widehat{\surf_g},\psi)$ of
        $(\surf_g,\Psi)$ is the data of $\pi:\widehat{\surf_g}\stackrel{2:1}{\to}{\surf_g}$ such that $\pi^*\Psi=\psi^2$. If $\widehat{P}$ and $\widehat{Q}$ are the preimages of the sets of zeroes and poles, respectively, of $\Psi$ under $\pi$, we let $\widehat{H}_1(\Psi)$ be the anti-invariant part of the relative homology group $H_1(\widehat{\surf_g}\setminus\widehat{Q},\widehat{P}; \C)$ with respect to the involution of $\widehat{\surf_g}$ associated to $\pi$. Integrating $\psi$ against a basis $\{\gamma_i\}_i$ of $\widehat{H}_1(\Psi)$ gives local coordinates $\int_{\gamma_i}\psi$ on the moduli spaces of quadratic differentials $\Quad_g(\w,-\mathbf{m})$. The map
        \[\int_*\psi: \widehat{H}_1(\Psi)\to \C, \quad \gamma\mapsto\int_\gamma\psi\]
        is the \emph{period map}.
    \item A horizontal trajectory is an integral curve for $\Psi$, i.e., a curve $\gamma\subset \surf$ where locally the quadratic
        differential has the form $dw^{\otimes 2}$, such that the
        imaginary part of $w\in \gamma$ is constant. The horizontal
        trajectories form the horizontal foliation, with distinguished
        trajectories connecting a zero and a pole, and generic
        trajectories connecting two poles.
    \item A saddle connection is (an isotopy class of) a straight arc connecting two
        zeroes along a fixed (arbitrary) direction with the horizontal trajectories, whose maximal
        domain is a finite interval. A quadratic differential is \emph{generic} if it has no
        \emph{horizontal} saddle connections, i.e., saddle connections along the horizontal direction. It means that there are not two zeroes aligned along the horizontal foliation.
    \item Near a pole $q_j$ of order at least $3$ on $\surf_g$, a quadratic differential
        $\Psi$ defines exactly
        $\operatorname{ord}(q_j)-2$ distinguished trajectories in the
        horizontal direction: those emanating from a zero. Around a
        zero $p_i$, there are exactly $\operatorname{ord}(p_i)+2$ of
        these distinguished trajectories. To see this write $\Psi$  (which, for instance, in the local complex coordinate $z$ centred at the zero $p_i$ behaves like $z^{\operatorname{ord}(p_i)}dz^{\otimes 2}$) in polar coordinates.
    \item Any generic meromorphic quadratic differential $\Psi$ on $\surf_g$, with $b$ poles
        of order $m_j\geq 3$ and $r$ zeroes $p_i$, of order
        $w_i\geq 1$ induces a weighted decorated marked surface
        $\surf_\w=(\surf,\MM,\Delta,\w)$ and a $\w$-mixed-angulation $\mathbb{A}$, which we describe.
        The surface $\surf$ is the real blow-up  of $\surf_g$ at the poles
        $$\surf=\operatorname{Bl}^\R_{q_1,\dots, q_b}\surf_g.$$
        This is a bordered surface obtained from $\surf_g$ by replacing
        any pole with a real dimension 1 boundary component that looks
        like $\R\PP^1$. The set of marked points $\MM$ is in bijection with the set of
        distinguished trajectories and is partitioned by
        $(M_j)_{j=1}^b=(m_j-2)_{j=1}^b$. See Figure \ref{picture_marked_pts}
        for an example.
        The set of decorations $\Delta$ coincides with the (preimage under blow-up
        of the) set of zeroes $\{p_1, \dots, p_r\}\subset \surf$ of $\Psi$
        endowed with a compatible weight function $\w$ defined by
        $w(p_i)=\operatorname{ord}_{\Psi}(p_i)$.

        A $\w$-mixed-angulation of $\surf_\w$ induced by $\Psi$ has edges which are isotopy classes of generic horizontal trajectories in $\surf$ minus the zeroes. See Figure \ref{fig_triang_disc}.
        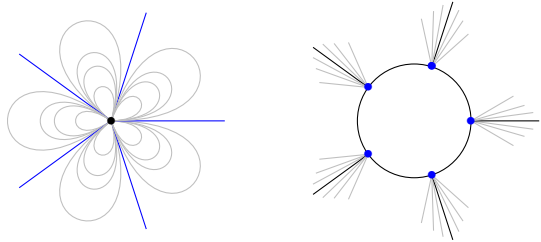
\begin{figure}[ht]
            \centering{
                \begin{tikzpicture}
                    \begin{scope}[shift={(-2,0)}, scale=1.5]
                        \foreach \j in {1,...,5}{
                            \draw[blue](72*\j:0) to (72*\j:1) ;
                            \draw[lightgray](2+72*\j:.01) .. controls (72*\j:1.5) and (72+72*\j:1.5) .. (70+72*\j:.01) ;
                            \draw[lightgray](2+72*\j:.01) .. controls (2+72*\j:1) and (70+72*\j:1) .. (70+72*\j:.01) ;
                            \draw[lightgray](4+72*\j:.01) .. controls (2+72*\j:.8) and (70+72*\j:.8).. (68+72*\j:.01) ;
                            \draw[lightgray](4+72*\j:.01) .. controls (2+72*\j:.5) and (70+72*\j:.5).. (68+72*\j:.01) ;
                        }
                        \draw (0,0) \nn;
                    \end{scope}
                    \begin{scope}[shift={(2,0)}, scale=1.5]
                        \draw (0,0) circle(.5);
                        \foreach \j in {1,...,5}{
                            \draw(72*\j:.5) to (72*\j:1.1);
                            \draw[lightgray](72*\j:.5) -- (4+72*\j:1.05) ;
                            \draw[lightgray](72*\j:.5) -- (8+72*\j:.98) ;
                            \draw[lightgray](72*\j:.5) -- (14+72*\j:.9) ;

                            \draw[lightgray](72+72*\j:.5) -- (68+72*\j:1.05) ;
                            \draw[lightgray](72+72*\j:.5) -- (64+72*\j:.98) ;
                            \draw[lightgray](72+72*\j:.5) -- (58+72*\j:.9) ;
                            \draw[blue](72*\j:.5) \nn;
                        }
                        \draw[blue](72:.5) \nn;
                    \end{scope}
                \end{tikzpicture}
                \caption{On the left: the horizontal trajectories around a pole of order $7$. The five distinguished trajectories are in blue. On the right: the corresponding five marked points on the real blow-up of the surface at the pole.}\label{picture_marked_pts}
            }
        \end{figure}
        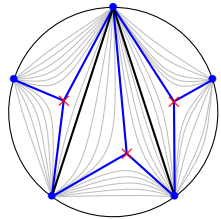
\begin{figure}[ht]
            \begin{tikzpicture}[scale=1.15, rotate=18]
                \draw (0,0) circle(1.2);
                \draw (-.5,.3) coordinate (p1);
                \draw (0,-.5) coordinate (p2);
                \draw (.7,-.1) coordinate (p3);

                \draw[lightgray] (72*1:1.2) ..controls (.1,-.5).. (72*4:1.2);
                \draw[lightgray] (72*1:1.2) ..controls (.2,-.5).. (72*4:1.2);
                \draw[lightgray] (72*1:1.2) ..controls (.3,-.5).. (72*4:1.2) ;
                \draw[lightgray] (72*1:1.2) ..controls (.6,-.1).. (72*4:1.2);
                \draw[lightgray] (72*1:1.2) ..controls (.5,-.1).. (72*4:1.2);

                \draw[lightgray] (72*1:1.2) ..controls (-.1,-.5).. (72*3:1.2);
                \draw[lightgray] (72*1:1.2) ..controls (-.3,-.5).. (72*3:1.2);
                \draw[lightgray] (72*1:1.2) ..controls (-.5,-.5).. (72*3:1.2) ;
                \draw[lightgray] (72*1:1.2) ..controls (-.5,.1).. (72*3:1.2);

                \draw[lightgray] (72*1:1.2) ..controls (-.5,.4).. (72*2:1.2);
                \draw[lightgray] (72*1:1.2) ..controls (-.5,.5).. (72*2:1.2);
                \draw[lightgray] (72*1:1.2) ..controls (-.5,.6).. (72*2:1.2);
                \draw[lightgray] (72*1:1.2) ..controls (-.5,.7).. (72*2:1.2);
                \draw[lightgray] (72*1:1.2) ..controls (-.5,.8).. (72*2:1.2);
                \draw[lightgray] (72*1:1.2) ..controls (-.5,.9).. (72*2:1.2);

                \draw[lightgray] (72*2:1.2) ..controls (-.6,.3).. (72*3:1.2);
                \draw[lightgray] (72*2:1.2) ..controls (-.7,.3).. (72*3:1.2);
                \draw[lightgray] (72*2:1.2) ..controls (-.8,.3).. (72*3:1.2);
                \draw[lightgray] (72*2:1.2) ..controls (-.9,.3).. (72*3:1.2);
                \draw[lightgray] (72*2:1.2) ..controls (-1,.3).. (72*3:1.2);

                \draw[lightgray] (72*3:1.2) ..controls (0,-.6).. (72*4:1.2);
                \draw[lightgray] (72*3:1.2) ..controls (0,-.7).. (72*4:1.2);
                \draw[lightgray] (72*3:1.2) ..controls (0,-.8).. (72*4:1.2);
                \draw[lightgray] (72*3:1.2) ..controls (0,-.9).. (72*4:1.2);
                \draw[lightgray] (72*3:1.2) ..controls (0,-1).. (72*4:1.2);

                \draw[lightgray] (72*4:1.2) ..controls (.7,-.3).. (72*5:1.2);
                \draw[lightgray] (72*4:1.2) ..controls (.7,-.4).. (72*5:1.2);
                \draw[lightgray] (72*4:1.2) ..controls (.7,-.6).. (72*5:1.2);
                \draw[lightgray] (72*4:1.2) ..controls (.9,-.6).. (72*5:1.2);
                \draw[lightgray] (72*4:1.2) ..controls (.9,-.7).. (72*5:1.2);

                \draw[lightgray] (72*5:1.2) ..controls (.7,.1).. (72*1:1.2);
                \draw[lightgray] (72*5:1.2) ..controls (.7,.2).. (72*1:1.2);
                \draw[lightgray] (72*5:1.2) ..controls (.7,.4).. (72*1:1.2);
                \draw[lightgray] (72*5:1.2) ..controls (.7,.6).. (72*1:1.2);
                \draw[lightgray] (72*5:1.2) ..controls (.7,.).. (72*1:1.2);

                \draw[blue,thick] (72*1:1.2) to (p1);
                \draw[blue,thick] (72*2:1.2) to (p1);
                \draw[blue,thick] (72*3:1.2) to (p1);
                \draw[blue,thick] (72*1:1.2) to (p2);
                \draw[blue,thick] (72*3:1.2) to (p2);
                \draw[blue,thick] (72*4:1.2) to (p2);
                \draw[blue,thick] (72*4:1.2) to (p3);
                \draw[blue,thick] (72*5:1.2) to (p3);
                \draw[blue,thick] (72*1:1.2) to (p3);	

                \draw[thick] (72*1:1.2) to (72*4:1.2);
                \draw[thick] (72*1:1.2) to (72*3:1.2);

                \draw[thick] (-.5,.3) \ww coordinate (p1);
                \draw[thick] (0,-.5) \ww coordinate (p2);
                \draw[thick] (.7,-.1) \ww coordinate (p3);
                \foreach \j in {1,...,5}{
                    \draw[blue](72*\j:1.2) \nn ;
                }
            \end{tikzpicture}
            \caption{In black, a triangulation on the disk  induced by (the horizontal foliation of) a quadratic differential on a genus zero surface, i.e., $\C\PP^1$, with critical profile $(1,1,1,-7)$. The red crosses correspond to the zeroes, and the blue lines are distinguished trajectories. The boundary component replaces the pole of the differential.
            }\label{fig_triang_disc}
        \end{figure}
    \end{itemize}
Fix a finite rank free abelian group $\Lambda$ and a reference weighted decorated marked surface $\surf_\w$.
\begin{itemize}
    \item A quadratic differential in $\Quad_g(\w,-\mathbf{m})$ is
    \emph{period-framed} or \emph{$\Lambda$-framed} if it is
    endowed with an isomorphism $\widehat{H}_1(\Psi)\simeq\Lambda$, so that we can define period coordinates valued in $\C^n=\Hom(\Lambda,\C)$. It is said to be
    \emph{Teichm\"uller-framed} if it is
    equipped with a
    diffeomorphism $\surf_\w\to\operatorname{Bl}^\R_{q_1,\dots, q_r}\surf_g$ preserving the
    marked points, the decorations, and their weights, up to
    diffeomorphism.
\end{itemize}

\subsection{$\CY_3$-Ginzburg categories from simply decorated marked surfaces}\label{sec:ginzburg}

Ginzburg categories are triangulated categories of
Calabi--Yau dimension~$N\geq 3$, associated with the
appropriately graded version $\bar{Q}$ of a quiver with potential $(Q,W)$ that must be acyclic if we chose $N\neq 3$.
Here we
are interested in a sub-class of Ginzburg categories
that have Calabi--Yau dimension~$3$ and are obtained from a
quiver with potential dual to a triangulation of a simply decorated (unpunctured) marked
Riemann surface
\[\surf_\Tri=(\surf,\MM, \Delta,\Tri),\]
of genus $g$, in the sense of Section \ref{sec:prelim}.
\medskip

Simple weights are particularly nice for several reasons. First, we notice that, once~$g$ and a partition $(M_j)_{j=1}^b$ of the cardinality~$|\MM|$ of~$\MM$ are fixed, the compatibility
condition \eqref{eq:cp1} fixes the number of
decorations in the interior of a surface~$\surf$
underlying~$\mathbf{S}_\Tri$. The choice of writing the
whole t-uple is aimed to remark that a set of decorations
has been fixed, and flips of arcs are relative to the
decorations. Moreover, forgetting the decorations, forward and backward flips coincide and are involutions. Un-decorated flips of arcs and mutations of quivers are in correspondence. We denote by $\EG(\surf, \MM)$ the finite exchange graphs whose vertices are un-decorated
triangulations $(\surf,\MM)$ and whose (un-oriented edges)
are flips (not relative to decorations).

Given a triangulation~$\mathbb{T}$ of~
$\mathbf{S}_\Tri$ separating the decorations, we denote by~$(Q_\mathbb{T},W_\mathbb{T})$ the
quiver constructed accordingly to Definition \ref{quiver_from_mixedangulation}, and we
consider the Ginzburg algebra~
$\Gamma_3(Q_\mathbb{T},W_\mathbb{T})$, defined in Section \ref{subsec:QP}. The $\Hom^\bullet$-quiver $\bar{Q}_\mathbb{T}$ of the derived category of~$\Gamma_3(Q_{\mathbb{T}},W_{\mathbb{T}})$, in this special case, can be read off from~$\mathbb{T}$ similarly to~$(Q_{\mathbb{T}},W_{\mathbb{T}})$. Its vertices are the arcs of the triangulations, and there is an arrow in degree $-i$ connecting two arcs $k_1$ and $k_2$ precisely if, walking along the perimeter of the triangle they belong to, there are exactly $i$ different arcs separating them. This means that, for each vertex of $\bar{Q}_0$, there is a loop in degree $-2$, and for each degree $0$ arrow from $k_1$ to $k_2$, there is a degree $-1$ arrow from $k_2$ to $k_1$.
\begin{figure}[ht]
    \centering{
        \begin{tikzpicture}[scale=1.6]
            \begin{scope}[shift={(-1.2,0)}]
                \draw (0,0) circle (.6cm);
                \foreach \j in {0,...,4}{\draw (90+72*\j:.6cm) coordinate  (w\j);}
                \draw[thick, blue] (w0) --  (w3);
                \draw[thick, teal] (w0) -- (w2);
                \foreach \j in {0,...,4}{\draw($(w\j)$)\nn;}
            \end{scope}

            \begin{scope}[shift={(1.2,0)}]
                \draw[teal] (-.6,0) \nn ;
                \draw[blue] (.6,0) \nn ;
                \draw (0,.15) node[font=\tiny,above] {0};
                \draw (0,-.15) node[font=\tiny,below] {-1};
                \draw[-stealth] (-.55,.03) ..controls (0,.2) .. (.55,.03) ;
                \draw[-stealth] (.55,-.03) ..controls (0,-.2) .. (-.55,-.03) ;
                \draw (-1.15,0)node[font=\tiny,right] {-2};
                \draw (1.15,0)node[font=\tiny,left] {-2};
                \draw[-stealth] (-.64,.03) ..controls (-1,.4) and (-1,-.4) .. (-.64,-.03) ;
                \draw[-stealth] (.64,.03) ..controls (1,.4) and (1,-.4).. (.64,-.03) ;
            \end{scope}
        \end{tikzpicture}
    }
    \caption{Graded quiver associated with a triangulation of a disk with five marked points on the boundary. The zero degree part is~$Q_{\mathbb{T}}$.}
\end{figure}
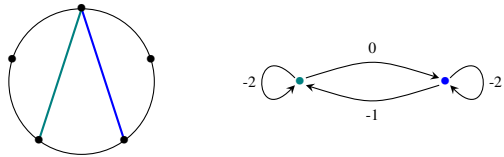

We fix an initial triangulation $\mathbb{T}^\circ$ and we define the triangulated $\CY_3$ category
\be \calD_3(\mathbf{S}_\Tri):=\pvd \Gamma_3(Q_{\mathbb{T}^\circ},W_{\mathbb{T}^\circ}),
\ee
which we call the Ginzburg category associated with the data of~$\surf_\Tri$ and~$\mathbb{T}^\circ$. By definition, the category $\calD_3(\mathbf{S}_\Tri)$ admits a bounded t-structure with finite heart~
$\cl H:= \modules\cl J(Q_{\mathbb{T}^\circ},W_{\mathbb{T}^\circ})$, which we call
\emph{standard} t-structure (and standard heart). The following theorem is a consequence of results in \cite{KQ2,KY}.

\begin{theorem} The category~$\calD_3(\mathbf{S}_\Tri)$ is $\Hom$-finite and has Calabi--Yau dimension~3. Different initial triangulations in~$\Exch^\circ(\surf_\Tri)$ define equivalent triangulated categories.
\end{theorem}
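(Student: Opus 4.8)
The statement comprises three claims: that $\calD_3(\surf_\Tri)$ is $\Hom$-finite, that it has Calabi--Yau dimension $3$, and that the equivalence class of $\calD_3(\surf_\Tri)$ does not depend on the choice of initial triangulation in $\Exch^\circ(\surf_\Tri)$. The first two are essentially specialisations of the structural facts about perfectly valued derived categories of Ginzburg dg algebras recalled before the theorem, so the geometric content lies in the third; I would organise the argument accordingly.

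\textbf{Calabi--Yau dimension and $\Hom$-finiteness.} By the discussion preceding the theorem, $\calD_3(\surf_\Tri)=\pvd\Gamma_3(Q_{\mathbb{T}^\circ},W_{\mathbb{T}^\circ})$ is Calabi--Yau of dimension $3$ — the case $N=3$ of the general statement that $\pvd\Gamma_N(Q,W)$ is $N$-Calabi--Yau — and it carries a bounded t-structure whose heart is the finite abelian category $\modules\cl J(Q_{\mathbb{T}^\circ},W_{\mathbb{T}^\circ})$. For $\Hom$-finiteness I would argue by d\'evissage along this t-structure: every object of $\calD_3(\surf_\Tri)$ is a finite iterated extension of shifts of the simple objects $S_i$, $i\in Q_0$, so it suffices to check that $\bigoplus_{k}\Hom(S_i,S_j[k])$ is finite dimensional. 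This space vanishes for $k<0$ by the t-structure axioms, vanishes for $k>3$ by the $3$-Calabi--Yau duality $\Hom(S_i,S_j[k])\cong\Hom(S_j,S_i[3-k])^{\vee}$, and for $0\le k\le 3$ it is finite dimensional because $\modules\cl J$ is finite (indeed these dimensions are read off the graded quiver $\bar Q_{\mathbb{T}^\circ}$). Propagating through the long exact sequences of $\Hom$ then gives finite dimensionality of $\bigoplus_k\Hom(E,F[k])$ for all $E,F\in\calD_3(\surf_\Tri)$.

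\textbf{Independence of the initial triangulation.} Here I would combine a geometric and a homological ingredient. Geometrically, a flip of a triangulation of $\surf_\Tri$ at an arc either leaves the underlying undecorated triangulation unchanged, in which case the quiver with potential of Definition~\ref{quiver_from_mixedangulation} is unchanged, or it changes it, in which case the two quivers with potential are related by the mutation $\mu_j$ at the vertex $j$ labelled by the flipped arc, up to right-equivalence, and every quiver with potential obtained this way is non-degenerate; this is the content of \cite{LabaFragQP}, in the decorated formulation of \cite{KQ2}. Homologically, right-equivalent quivers with potential have isomorphic Ginzburg dg algebras, while by Keller--Yang \cite{KY} a mutation $\mu_j$ of a non-degenerate quiver with potential induces a triangle equivalence $\pvd\Gamma_3(Q,W)\simeq\pvd\Gamma_3(\mu_j(Q,W))$. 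Since $\Exch^\circ(\surf_\Tri)$ is by definition the connected component of the flip graph containing the reference triangulation, any two of its vertices are joined by a finite chain of forward and backward flips; composing the corresponding Keller--Yang equivalences (and the isomorphisms coming from trivial flips) yields a triangle equivalence between the two Ginzburg categories.

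\textbf{The main obstacle.} The delicate step is the geometric input: matching a flip not just with a quiver mutation but with a mutation of the quiver \emph{together with its potential}, up to right-equivalence, and verifying that iterating flips never creates loops or $2$-cycles — precisely the non-degeneracy needed for the Keller--Yang theorem to apply at every stage. In this survey this is a citation to \cite{LabaFragQP,KQ2}; a self-contained proof would require a careful case analysis of the triangles adjacent to a flipped arc and of the cycles entering $W_{\mathbb{T}}$.
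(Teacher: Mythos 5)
Your proposal is correct and follows exactly the route the paper intends: the paper gives no proof, stating only that the theorem is a consequence of results in \cite{KQ2,KY}, and your reconstruction — Calabi--Yau duality and d\'evissage along the standard finite heart for $\Hom$-finiteness, plus the flip--mutation correspondence of \cite{LabaFragQP,KQ2} fed into the Keller--Yang equivalences of \cite{KY} for independence of the triangulation — is precisely how those citations are meant to be assembled. The only quibble is that the first branch of your dichotomy (a decorated flip leaving the undecorated triangulation unchanged) never actually occurs for a single flip, but since you handle it by a trivial equivalence this costs nothing.
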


We consider the full exchange graph
$\EG(\calD_3(\mathbf{S}_\Tri))$ and we restrict to the
connected component containing the vertex
corresponding to the
standard heart $\cl H$. We denote it by
$\EG^\circ(\calD_3(\mathbf{S}_\Tri))$.
Let $\operatorname{sph}(\cl H)$ be the spherical twist group
of $\calD_3(\mathbf{S}_\Tri)$, i.e., the subgroup of
$\Aut\calD_3(\mathbf{S}_\Tri)$ generated by the set of spherical twists
$\{\Phi_S\,|\, [S]\in \Sim\cl H\}$ defined by
\[ \Phi_S(X)= \operatorname{Cone}(S\otimes \Hom^\bullet(S,X)\to X).	\]
The following theorem relates the exchange graph of $\calD_3(\mathbf{S}_\Tri)$ and that of the underlying decorated or un-decorated surface. 

\begin{theorem}[{\cite{qiubraid16,qiu2018decorated}}]
\label{thm_iso_EG}
    There are isomorphisms of infinite exchange graphs
    \be \EG^\circ(\calD_3(\mathbf{S}_\Tri)) \simeq \Exch^\circ(\mathbf{S}_\Tri),
    \ee
    and of unoriented finite exchange graphs
    \be
    \EG^\circ(\calD_3(\mathbf{S}_\Tri))/\operatorname{sph}(\cl H) \simeq \EG(\surf,\MM).
    \ee
\end{theorem}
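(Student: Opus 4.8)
The plan is to establish the first (infinite, oriented) isomorphism by matching vertices and edges, and then to read off the second (finite, unoriented) one by passing to the quotient by the spherical twist group. On the category side, $\EG^\circ(\calD_3(\mathbf{S}_\Tri))$ is by definition the component of the exchange graph containing the standard heart $\cl H=\modules\cl J(Q_{\mathbb{T}^\circ},W_{\mathbb{T}^\circ})$; on the surface side, $\Exch^\circ(\mathbf{S}_\Tri)$ is the component of the forward-flip graph containing the initial triangulation $\mathbb{T}^\circ$. I would send $\cl H\mapsto\mathbb{T}^\circ$ and extend by matching flips with tilts. The bridge is the mutation class of the quiver with potential $(Q_{\mathbb{T}^\circ},W_{\mathbb{T}^\circ})$: by Labardini-Fragoso \cite{LabaFragQP} a flip of a triangulation of the underlying surface corresponds to a mutation $\mu_i$ of the associated quiver with potential (and these QPs are non-degenerate), while by Keller--Yang \cite{KY} a mutation $\mu_i(Q,W)$ induces a triangle equivalence $\pvd\Gamma_3(\mu_i(Q,W))\simeq\pvd\Gamma_3(Q,W)$ carrying the standard heart of the mutated algebra to a simple forward or backward tilt of the standard heart. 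Hence mutation-equivalent QPs index a distinguished family of hearts obtained from $\cl H$ by iterated simple tilts. The passage from ``mutation classes of hearts'' to \emph{all} hearts in the component, matching the passage from triangulations of $(\surf,\MM)$ to decorated triangulations of $\mathbf{S}_\Tri$, is the content of \cite{qiu15,KQ2}: a decoration records the extra $\Z$-torsor of lifts that distinguishes forward from backward tilting over a given wall, which is exactly why $\Exch^\circ(\mathbf{S}_\Tri)$ (and the component of the exchange graph of hearts) is infinite.

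Next I would verify edge-compatibility together with surjectivity of the vertex map. Since $\calD_3(\mathbf{S}_\Tri)$ is $\Hom$-finite and Calabi--Yau of dimension $3$, every finite heart $\cl A$ in the component has exactly $|\Sim\cl A|$ simple objects, all $3$-spherical, equal in number to the arcs of the associated triangulation; each simple $S$ yields a forward tilt $\mu^\sharp_S\cl A$ which stays inside $\EG^\circ(\calD_3(\mathbf{S}_\Tri))$ and is identified, via a Keller--Yang equivalence, with the heart of the QP obtained by mutating at the corresponding vertex, i.e.\ with the result of the forward flip at the corresponding arc; conversely every forward flip arises this way. To conclude that the assignment is a genuine graph isomorphism one must also match the relations of the two graphs (squares and longer cycles), which reduces to comparing, in rank $2$, the local tilting pattern of a $\CY_3$ category with the local flip pattern in a sub-surface.

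Finally, for the second isomorphism I would invoke the identification of the spherical twist group $\sph(\cl H)\subset\Aut\calD_3(\mathbf{S}_\Tri)$ with the braid twist group $\BT(\mathbf{S}_\Tri)$ of the decorated surface acting on decorated triangulations by moving the decoration points \cite{qiubraid16}; under the first isomorphism this intertwines the $\sph(\cl H)$-action on $\EG^\circ(\calD_3(\mathbf{S}_\Tri))$ with the braid-twist action on $\Exch^\circ(\mathbf{S}_\Tri)$. Taking quotients collapses decorated triangulations to ordinary triangulations of $(\surf,\MM)$ and identifies forward with backward flips, yielding $\Exch^\circ(\mathbf{S}_\Tri)/\BT(\mathbf{S}_\Tri)\simeq\EG(\surf,\MM)$; finiteness and connectivity of $\EG(\surf,\MM)$ is the classical fact that a fixed marked surface has finitely many isotopy classes of triangulations, all related by flips.

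The step I expect to be the main obstacle is the exhaustion-and-well-definedness part of the second paragraph: proving that $\EG^\circ(\calD_3(\mathbf{S}_\Tri))$ contains no finite hearts beyond those produced by iterated simple tilts from $\cl H$, so that the vertex map is surjective, and that the map is independent of the chosen sequence of flips/tilts. Both hinge on a careful analysis of the exchange-graph relations and on the precise form of the Keller--Yang equivalences, together with the decoration bookkeeping that pins down the $\Z$-worth of lifts; this is where the detailed arguments of \cite{qiubraid16,qiu2018decorated,KQ2} do the real work.
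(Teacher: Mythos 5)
The paper does not prove Theorem \ref{thm_iso_EG}: it is quoted verbatim from \cite{qiubraid16,qiu2018decorated} with no argument supplied, so there is no in-text proof to compare yours against. That said, your sketch assembles exactly the ingredients the surrounding text attributes to the literature --- flips of undecorated triangulations correspond to mutations of the quiver with potential \cite{LabaFragQP}, mutations induce derived equivalences sending the standard heart to a simple forward or backward tilt \cite{KY}, the decorations upgrade this to the infinite oriented graph \cite{qiu15,KQ2}, and the identification $\sph(\cl H)\simeq\BT(\surfo)$ gives the quotient statement \cite{qiubraid16} --- so as a roadmap it is faithful to how the cited papers actually proceed.

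Two small corrections to your own assessment of where the difficulty lies. First, the ``exhaustion'' worry is vacuous as stated: $\EG^\circ(\calD_3(\surfo))$ is \emph{defined} in the paper as the connected component of the full exchange graph containing $\cl H$, so every vertex in it is by construction reachable from $\cl H$ by a finite chain of simple tilts; there is nothing to exhaust. The genuine hard point is the opposite direction, namely injectivity and well-definedness of the vertex correspondence --- that two sequences of decorated flips ending at the same decorated triangulation yield the same heart, and that distinct decorated triangulations yield distinct hearts. This is precisely what the topological realisation of \cite{qiu2018decorated} is for, and it is also where one needs that reachable hearts remain finite with spherical simples so that the induction on tilts closes up. Second, describing the fibre of $\Exch^\circ(\surfo)\to\EG(\surf,\MM)$ as a ``$\Z$-torsor of lifts'' is too coarse: repeated forward flips at a single arc do trace out a $\Z$-line of decorated arcs, but the full set of decorated triangulations over a fixed undecorated one is governed by the braid twist group of the decorated surface, which is what makes the second isomorphism a quotient by $\sph(\cl H)\simeq\BT(\surfo)$ rather than by a free abelian group. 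With those adjustments your outline is a correct account of the cited proof.
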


Beside stating a bijection between the set~$|\EG^\circ(\calD_3(\mathbf{S}_\Tri))|$ of (reachable) finite hearts of
$\calD_3(\surf_\Tri)$ and triangulations of~$\surf_\Tri$,
Theorem \ref{thm_iso_EG} proves a correspondence between the operation of forward (backward) simple tilt on bounded t-structures of~$\calD_3(\surf_\Tri)$ and forward (backward) mutations of arcs (relative to a set of decorations) on~$\surf_\Tri$.

\subsection{Stability conditions as quadratic differentials}

We fix $\surf_\Tri$ as above and an initial triangulation $\mathbb{T}^\circ$ whose dual quiver with potential $(Q_{\mathbb{T}^\circ},W_{\mathbb{T}^\circ})$ has set of vertices $Q_0:=(Q_{\mathbb{T}^\circ})_0$. We let $\Lambda:=\bZ^{|Q_0|}$.

In analogy with the notation used for the exchange graph $\EG^\circ(\calD_3(\mathbf{S}_\Tri))$ of $\calD_3(\surf_{\Tri})$, the symbol $^\circ$ in
$\Stab^\circ(\calD_3(\surf_{\Tri}))$ identifies the
connected component (principal part) of the stability manifold
$\Stab\calD_3(\surf_{\Tri})$ containing stability conditions
supported on the standard heart $\cl H= \modules \cl J(Q_{\mathbb{T}^\circ},W_{\mathbb{T}^\circ})$, while on groups of
autoequivalences of the category it identifies the subgroup
of those that preserve the principal part. The subscript
$_K$ on groups of autoequivalences refers to functors that
moreover act as the identity on the Grothendieck group. The
groups $\mathpzc{Aut}^\circ$ and $\mathpzc{Aut}^\circ_K$ are
the quotients of $\Aut^\circ(\calD_3(\surf_\Tri))$ and
$\Aut^\circ_K(\calD_3(\surf_\Tri))$ by the corresponding
subgroups of negligible autoequivalences, i.e., those that
act trivially on $\Stab^\circ(\calD_3(\surf_\Tri))$.
We act on the stability manifold by groups of autoequivalences
on the right, changing the convention from Section \ref{sec_stab_mfd}.
The forgetful map defined in \eqref{forgetful_map}, and here
restricted to $\Stab^\circ(\calD_3(\surf_\Tri))$, is $\cl Z:K(\calD_3(\surf_\Tri))\simeq \Gamma\to \C$.

In the next theorem, $\Quad_g(1^r,-\mathbf{m})$,
for $\mathbf{m}=(m_j)_{j=1}^b$, denotes the space of meromorphic quadratic differentials on a Riemann surface of genus $g$ with simple zeroes and poles of order $m_j$. Recall that quadratic differentials can be framed in several ways:
$\Quad^{\Lambda,\circ}_g(1^r,-\mathbf{m})$ denotes the relevant
connected component of the space of \mbox{$\Lambda$-framed} quadratic
differentials, while $\FQuad^\circ(\surf_{\Tri})$ denotes the
relevant connected component of the space of Teichm\"uller
framed quadratic differentials. In both cases the connected
component is specified by the choice of the triangulation $\mathbb{T}^\circ$ of $\surf_\Tri$. The period map $\int_*-$ was
defined in Section \ref{sec:prelim}.

\begin{theorem}[Bridgelan--Smith correspondence]\label{thm:BS15_iso}
    There is an isomorphism  of complex manifolds that fits
    into a commutative diagram
    \be\label{BSafterKQ}\xymatrix{
        K: \FQuad^\circ(\surf_{\Tri}) \ar[rr]^{\simeq} \ar[dr]_{\int} && \Stab^\circ(\calD_3(\surf_{\Tri}))\ar[dl]^{\cl Z}\\
        & \Hom(\Lambda,\C)
    }
    \ee
    and is equivariant with respect to the action of the
    mapping class group
    $\MCG(\surf_{\Tri})$ on the domain and of the group
    $\mathpzc{Aut}^\circ(\calD)$ on
    the range. It descends to  isomorphisms of complex orbifolds
    \be\label{kgamma}
    K^\Lambda: \Quad^{\Lambda,\circ}_g(1^r,-\mathbf{m}) \to
    \Stab^\circ(\calD_3(\surf_{\Tri}))/\mathpzc{Aut}^\circ_K(\calD_3(\surf_{\Tri})),
    \ee
    \be \label{kbar}
    \overline{K}: \Quad_g(1^r,-\mathbf{m}) \to \Stab^\circ(\calD_3(\surf_{\Tri}))/\mathpzc{Aut}^\circ(\calD_3(\surf_{\Tri})) \,.
    \ee
\end{theorem}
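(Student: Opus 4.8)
The plan is to construct the map $K$ directly from the geometry of the horizontal foliation, to verify that it is a local biholomorphism by comparing the two natural systems of local coordinates --- period coordinates on $\FQuad^\circ(\surf_{\Tri})$ and the central charge on $\Stab^\circ(\calD_3(\surf_{\Tri}))$ --- to upgrade this to a global isomorphism by a gluing argument indexed by the exchange graph, and finally to deduce the equivariance and the two quotient statements formally. I expect the main obstacle to be the global step: checking that the cells associated to triangulations genuinely exhaust the connected component on both sides and glue consistently, with no hidden monodromy; on the quadratic-differential side this needs control of degenerations of the foliation along cell boundaries, and on the stability side it rests on reachability of all finite hearts together with the support property.

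First I would construct $K$. Given a generic (saddle-free) quadratic differential $\Psi\in\FQuad^\circ(\surf_{\Tri})$, its horizontal foliation decomposes the real blow-up $\surf$ into finitely many horizontal strips and half-planes; to the core geodesic of each strip one attaches a distinguished (\lq\lq string\rq\rq) object of $\calD_3(\surf_{\Tri})$, and these objects are the simple objects of a finite heart $\cl H_\Psi$ whose dual quiver with potential is the one attached by Definition \ref{quiver_from_mixedangulation} to the associated WKB triangulation $\mathbb{T}_\Psi$. Setting $Z:=\int_*\psi$ on $\Lambda\cong\widehat H_1(\Psi)\cong K(\calD_3(\surf_{\Tri}))$ gives a group homomorphism with $Z(S_i)\in\halfplane$ for each simple $S_i$ of $\cl H_\Psi$, so by the discussion of finite hearts in Subsection \ref{sec:stab:examples} the pair $(\cl H_\Psi,Z)$ automatically satisfies the Harder--Narasimhan condition. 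The genuinely delicate point here is the support property: one must bound $|Z(E)|$ below by $c\,\|\nu[E]\|$ uniformly over semistable $E$, which is achieved by interpreting $|Z(E)|$ as the length of the corresponding saddle connection or closed trajectory and invoking a positivity (Bogomolov--Gieseker-type) estimate for the associated quadratic form, in the sense of the discussion following Definition \ref{def_stab1}. For non-generic $\Psi$, i.e.\ those with horizontal saddle connections, one works directly with the slicing read off from the trajectory structure and extends $K$ by continuity. Commutativity of the triangle \eqref{BSafterKQ} is then immediate, since both $\int$ and $\cl Z$ are by construction the same linear functional on $\Lambda$.

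Next I would prove that $K$ is first a local and then a global isomorphism. Period coordinates $E\mapsto\big(\int_{\gamma_i}\psi\big)_i$ identify a neighbourhood of $\Psi$ in $\FQuad^\circ(\surf_{\Tri})$ with an open subset of $\Hom(\Lambda,\C)$, while Theorem \ref{thm_mnf_str} identifies a neighbourhood of $K(\Psi)$ in $\Stab^\circ(\calD_3(\surf_{\Tri}))$ with an open subset of $\Hom(\Lambda,\C)$ via $\cl Z$; commutativity of \eqref{BSafterKQ} then forces $K$ to be a local biholomorphism. For global bijectivity I would cover both spaces cell by cell: for each triangulation $\mathbb{T}$ reachable from $\mathbb{T}^\circ$, the saddle-free locus $B_\mathbb{T}\subset\FQuad^\circ(\surf_{\Tri})$ with WKB triangulation $\mathbb{T}$ maps isomorphically onto $\Stab(\cl H_\mathbb{T})\cong\halfplane^{n}$, and the walls along which these cells are glued correspond, on the differential side, to the appearance of a single horizontal saddle connection and, on the categorical side, to a simple object acquiring phase $1$, i.e.\ to a simple tilt in the sense of Proposition \ref{prop:chamberStab}. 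By Theorem \ref{thm_iso_EG} the combinatorics of the gluings on the two sides is governed by the \emph{same} graph $\Exch^\circ(\surf_{\Tri})\cong\EG^\circ(\calD_3(\surf_{\Tri}))$. Surjectivity onto the connected component follows because, up to the $\C$-action, every stability condition in $\Stab^\circ$ is supported on a finite heart which is reachable (hence of the form $\cl H_\mathbb{T}$) and because a connected component is exhausted by the closures of the regions $\Stab(\cl H_\mathbb{T})$; injectivity follows because a generic $\Psi$ is recovered from the pair (WKB triangulation, period coordinates), with the non-generic locus treated as a limit. Hence $K$ is a biholomorphism.

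Finally, equivariance and the two descents are formal. The WKB construction is natural under diffeomorphisms of $\surf_{\Tri}$, giving $\MCG(\surf_{\Tri})$-equivariance of $K$, and transporting this through the identification of exchange graphs of Theorem \ref{thm_iso_EG} matches it with the $\mathpzc{Aut}^\circ(\calD_3(\surf_{\Tri}))$-action on the target. Quotienting the source by the subgroup of the mapping class group acting trivially on period coordinates, and the target by $\mathpzc{Aut}^\circ_K(\calD_3(\surf_{\Tri}))$ --- the autoequivalences inducing the identity on $K(\calD_3(\surf_{\Tri}))$, hence on $\Lambda$ --- yields the orbifold isomorphism \eqref{kgamma}; quotienting by the full groups yields \eqref{kbar}. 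In both cases one only needs that the quotients inherit complex-orbifold structures, which holds because the actions are properly discontinuous.
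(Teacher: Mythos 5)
Your plan follows essentially the same route that the paper itself sketches and attributes to Bridgeland--Smith, King--Qiu and BMQS: construct $K$ on the saddle-free locus by reading a finite heart and a central charge (the period map) off the WKB triangulation, match the cell decompositions on the two sides via the common exchange graph of Theorem \ref{thm_iso_EG} and Proposition \ref{prop:chamberStab}, extend over the non-generic locus, and then descend to the two quotients. One small simplification: since each heart $\cl H_{\mathbb{T}}$ is finite, any central charge with $Z(S_i)\in\halfplane$ automatically satisfies both the Harder--Narasimhan condition \emph{and} the support property (as noted in Subsection \ref{sec:stab:examples}), so the Bogomolov--Gieseker-type length estimate you flag as the delicate point is not actually needed there.
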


Defining explicitly the isomorphisms is beyond the scope of these notes, and we limit ourselves to a panoramic view. However, we refer the interested reader to the Introduction to \cite{BS15} for full understanding of the correspondence.

The original Bridgeland--Smith correspondence is about the
existence of the map $K^\Lambda$ and is the content
of \cite[Theorem 11.2]{BS15} proved in Section 11 of
op.\ cit. It was inspired by the work of the physicists
Gaiotto, Moore, and Neitzke. In fact, a version of equation \eqref{kgamma} holds
more widely for Ginzburg categories of Calabi--Yau
dimension $3$ associated with
quivers with potential from a triangulation of a marked surface
possibly with punctures (with few exceptions, listed in
\cite[Definition 9.3]{BS15}; see also Section 11.6),
at the cost of possibly replacing the
space $\Quad^{\Lambda,\circ}_g(1^r,\mathbf{m})$ with an
appropriate bigger orbifold described in
\cite[Section 6]{BS15}. The construction of the quiver and its
category, in the presence of punctures, is not considered here
to avoid technicalities.
The construction of the map $K^\Lambda$ in \cite{BS15} relies on
previous results by Labardini-Fragoso \cite{LabaFragQP} on a correspondence between flips of arcs
in the \emph{un-decorated} surface and mutations of the quiver.
This explains
why the result is up to to the action of the group $\mathpzc{Aut}^\circ_K(\calD_3(\surf_\Tri))$.

The lift $K$ of equation \eqref{BSafterKQ} was constructed in
\cite[Theorem 4.13]{kq}, where the combinatorial description of the category $\calD_3(\surf_\Tri)$ defined from a quiver with potential is enhanced to data from an arc system on a simply decorated marked surface,
and the operation of mutation of quivers is promoted to flips of arcs relative
to decorations, cf.\ Theorem \ref{thm_iso_EG}.

Last, the quotient $\overline{K}$ is added in \cite{BMQS}, where the reader can also find a more technical but still concise sketch of the proof of the whole theorem. In fact, the correspondence is stated here as it appears in \cite[Theorem 7.1]{BMQS}.

In the rest of the subsection we recall some consequences,
already emphasised in \cite{BS15}, that are intimately related with the construction of the isomorphisms of Theorem~\ref{thm:BS15_iso}, and present a simple example.

\subsubsection*{The exchange graph is a skeleton}
The main idea behind the correspondence is that a \emph{generic} configuration of open arcs (a triangulation)
on the decorated surface $\surf_\Tri$ singles out a finite
bounded t-structure on $\calD_3(\surf_\Tri)$ and flipping (isotopy classes of) arcs behaves like simple
tilts of hearts. A non-generic configuration induced by a non-generic meromorphic quadratic differential can be obtained \lq\lq by rotation\rq\rq\ of the differential, or by a continuous deformation of the position of the zeroes.
A consequence of this correspondence is that the exchange graph $\EG^\circ(\calD_3(\surf_\Tri))$,
is a \lq\lq skeleton\rq\rq\ for the space $\stab^\circ(\calD_3)$, which is \emph{tame} or \emph{generically finite}.
\begin{cor}\label{cor_tame} The space $\Stab^\circ(\calD_3(\surf_{\Tri}))$ is \emph{tame} or \emph{generically finite}, i.e.,
    \[\Stab^\circ(\calD_3(\surf_\Tri))=\C\cdot \bigcup_{\cl H \in |{\EG^\circ}|}\stab\cl H,\]
    where $|{\EG^\circ}|$ stands for the set of vertices of $\EG^\circ(\calD_3(\surf_\Tri))$.
\end{cor}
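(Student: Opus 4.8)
The plan is to transport the statement across the Bridgeland--Smith isomorphism $K\colon\FQuad^\circ(\surf_{\Tri})\xrightarrow{\ \simeq\ }\Stab^\circ(\calD_3(\surf_{\Tri}))$ of Theorem~\ref{thm:BS15_iso}, using that multiplying a quadratic differential by a unit complex number corresponds under $K$ to the $\C$-action \eqref{eq:Caction} on stability conditions (a rotation of the slicing together with a rescaling of the central charge, reflected on periods through the commutative diagram \eqref{BSafterKQ}). Call a differential \emph{generic} if it has no horizontal saddle connection, as in Section~\ref{sec:prelim}, and let $\cl G\subset\FQuad^\circ(\surf_{\Tri})$ be the locus of generic differentials. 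The first --- and only substantial --- step is to show that for $\Psi\in\cl G$ the stability condition $K(\Psi)$ is supported on a finite heart lying in $|\EG^\circ|$. For this I would use that a generic $\Psi$ induces a $\Tri$-mixed-angulation, i.e.\ a triangulation $\mathbb{A}$ of $\surf_{\Tri}$, which under the isomorphism $\EG^\circ(\calD_3(\surf_{\Tri}))\simeq\Exch^\circ(\surf_{\Tri})$ of Theorem~\ref{thm_iso_EG} corresponds to a vertex $\cl H_{\mathbb{A}}\in|\EG^\circ|$ whose simple objects are indexed by the arcs $\gamma_i$ of $\mathbb{A}$. The content of the Bridgeland--Smith construction (\cite{BS15}, in the decorated form of \cite{kq}) is then that the $\gamma_i$ are $K(\Psi)$-stable of phases in $(0,1)$, that their central charges are the periods $\int_{\gamma_i}\psi$, which genericity forces into the open half-plane $\halfplane$, and that they generate precisely $\cl H_{\mathbb{A}}$; hence $K(\Psi)$ lies in the chart $\stab\cl H_{\mathbb{A}}\cong\halfplane^{|Q_0|}$ of Section~\ref{sec_stab_mfd}, and $K(\cl G)\subseteq\bigcup_{\cl H\in|\EG^\circ|}\stab\cl H$.

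The second step is a soft observation: every $\Psi\in\FQuad^\circ(\surf_{\Tri})$ becomes generic after a rotation. I would argue that the saddle connections of a fixed $\Psi$ form a countable set (their holonomies are a discrete subset of $\C$), so only countably many angles $\theta$ make $e^{i\theta}\Psi$ non-generic; for every other $\theta$ the differential $e^{i\theta}\Psi$ is generic, and by the $\C$-equivariance of $K$ in Theorem~\ref{thm:BS15_iso} it differs from $K(\Psi)$ by an element of $\C$ (essentially the real part of \eqref{eq:Caction}). Therefore $\C\cdot K(\Psi)=\C\cdot K(e^{i\theta}\Psi)$, and $\FQuad^\circ(\surf_{\Tri})=\C\cdot\cl G$. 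Combining the two steps and using that $K$ is bijective and $\C$-equivariant,
\[
\Stab^\circ(\calD_3(\surf_{\Tri}))=K\bigl(\FQuad^\circ(\surf_{\Tri})\bigr)=\C\cdot K(\cl G)\subseteq\C\cdot\!\!\bigcup_{\cl H\in|\EG^\circ|}\!\!\stab\cl H\subseteq\Stab^\circ(\calD_3(\surf_{\Tri})),
\]
the last inclusion holding because each such $\cl H$ is linked to the standard heart by simple tilts and $\Stab^\circ$ is $\C$-stable. Hence equality holds throughout, which is the claim; moreover every $\stab\cl H$ with $\cl H\in|\EG^\circ|$ is hit, since the corresponding triangulation in $\Exch^\circ(\surf_{\Tri})$ is realised by some generic differential.

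I expect the genuine obstacle to be the first step: matching, at the level of the explicit construction of $K$, the finite heart supporting $K(\Psi)$ with the combinatorial heart $\cl H_{\mathbb{A}}$ of the triangulation cut out by the horizontal foliation, and checking the positivity of the periods $\int_{\gamma_i}\psi$. This is exactly where the core of \cite{BS15,kq,BMQS} is used --- in particular the dictionary of Theorem~\ref{thm_iso_EG} between finite hearts/simple tilts and triangulations/flips must be shown compatible with the stability data, and one must know that a \emph{generic} foliation is genuinely a triangulation rather than a degenerate (e.g.\ ring-domain) configuration, so that the associated heart is finite. By contrast, once that identification is granted the remaining steps reduce to the $\C$-equivariance of $K$ and the countability of saddle connections, both of which are routine.
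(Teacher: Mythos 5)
Your argument is correct and follows the same route the paper sketches immediately after the corollary: the Bridgeland--Smith isomorphism sends saddle-free (generic) differentials to stability conditions supported on the finite hearts of $|\EG^\circ|$ via the induced WKB triangulation and Theorem~\ref{thm_iso_EG}, and every differential becomes saddle-free after all but countably many rotations, so the $\C$-action sweeps out the whole component. You also correctly isolate the genuinely hard input (that a saddle-free differential yields an honest triangulation whose dual heart carries $K(\Psi)$, which is the core of \cite{BS15,kq}), so nothing essential is missing.
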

In fact, the isomorphisms of Theorem \ref{thm:BS15_iso} are first constructed
on the generic
locus of meromorphic quadratic differentials with no horizontal saddle connections that correspond to generic stability conditions in $\stab^\circ(\calD_3(\surf_\Tri))$ supported on a finite heart, and
without
strictly semistable objects. Then the maps are
extended to the whole spaces by geometric
arguments, so that the unnecessity of studying
other hearts for computing the space
$\stab(\calD_3(\surf_\Tri))$ comes as a consequence of the
isomorphism.

Last, the isomorphism of Theorem~\ref{thm:BS15_iso} also implies that the sets of stability conditions supported on
non-finite hearts have real co-dimension at least $1$ in $\stab(\calD_3(\surf_\Tri))$. This is the case, for instance, of the
subset of stability conditions supported on the \mbox{$\operatorname{Coh}\PP^1$-shaped} heart of the Ginzburg category associated with the Kronecker quiver,
as we expect.

\subsubsection*{The period map}
We focus again on the generic locus of spaces of
differentials. Here, saddle connections that are dual to edges of
triangulations
correspond to simple objects in the finite heart
$\cl H$
corresponding to the triangulation. On the Riemann surface,
we restrict to the space lying between the special
trajectories connecting two zeroes and two poles.
The choice of an orientation of the surface guarantees that the angle ``measured by the differential''
between a saddle connection $\gamma$ connecting two zeroes and a generic horizontal trajectory connecting two
poles is between
$0$ and $\pi$, and hence that $\int_{\gamma}\sqrt{\psi}\in\mathbb{H}$ for $\gamma\in\Gamma$. Identifying $\widehat{H}_1(\Psi)\simeq \Gamma\simeq K(\cl H)$, the period map can be interpreted as a central charge $Z(\gamma)=\int_\gamma\sqrt{\Psi}$.

\subsubsection*{Counting semistable objects}
The proof of isomorphism \eqref{kgamma} by Bridgeland and
Smith also provides a correspondence between saddle
connections of a generic quadratic differential and (iso-classes of) stable objects of the corresponding stability condition. These,
in turn, can be encoded in moduli spaces of stable
representations of finite-dimensional algebras (in the abelian sense mentioned in \ref{sec:stab:examples}), thanks to
the work of King \cite{king}, and hence enumerated in
appropriate sense. This opens new perspectives in
classification and counting problems in the theory of flat
surfaces. See \cite[{Theorem 1.4 and Section 1.6}]{BS15} for more details, and \cite{nick} for a more recent perspective.
Note that here we specify \lq\lq generic\rq\rq\ differential, i.e., we are not admitting counting strictly semistable objects. Note also that, at a
triangulated level, the notion of enumerative
invariants, when defined, often requires Calabi--Yau
dimension 3.

\subsection{$A_2$ example}\label{sec:A2} As an example, we explicitly work out the ingredients of the correspondence in the
$A_2$ case. This is far from an exhaustive model given that all hearts of bounded \mbox{t-structures} are finite and appear in $\EG(\calD_3(A_n))$. Let $\calD_3(A_2)$ be the Ginzburg
category of Calabi--Yau dimension $3$ associated with the linear $A_2$ quiver
\[\bullet_1\rightarrow\bullet_2\,.\]
The standard heart $\cl H_0=\rep(A_2)$ has two non-isomorphic simple objects
denoted by $S_1,S_2$ that are generators of the category, finitely many iso-classes of indecomposables, and five torsion classes. Let $E$ be an indecomposable in $\cl H_0$ fitting into the sort exact sequence $S_2\to E \to S_1$. The procedure of simple tilts gives rise to the following (partial) exchange graph \eqref{pentagon_hearts}, which is also the fundamental domain of $\EG(\calD_3(A_2))$ with respect to the action of the spherical twist group $\operatorname{sph}(\calD_3(A_2))$. Any $\cl H_i$, for $i=1,\dots, 4$, still has finitely many torsion pairs and two simple
generators, so
that two arrows should emanate from any $\cl H_i$ in
the full exchange graph.
\begin{equation}\label{pentagon_hearts}
    \xymatrix{
        &\cl H_1=\bra S_1[1],E\ket \ar[r]^{\mu^\sharp_E} & \cl H_2=\bra S_2,E[1]\ket \ar[dd]^{\mu^\sharp_{S_2}}\\
        \cl H_0=\bra S_1, S_2\ket \ar[ur]^{\mu^\sharp_{S_1}}\ar[dr]_{\mu^\sharp_{S_2}} \\
        &\cl H_3=\bra S_1, S_2[1]\ket \ar[r]_{\mu^\sharp_{S_1}} & \cl H_4=\bra S_1[1], S_2[1]\ket
    }
\end{equation}
The hearts $\cl H_i$, for $i=0,\dots, 4$, in \eqref{pentagon_hearts} are in fact with intermediate hearts with respect to $\cl H_0$, and in bijection with the classes of hearts supporting stability conditions in~$\stab^\circ(\calD_3(A_2))/\sph(\calD_3(A_2))$.

The quiver $A_2$ can be obtained, with the procedure described in Definition \ref{quiver_from_mixedangulation}, by a triangulation of the disc $\mathbf{D}$ with one
boundary component and five marked points $\MM$ on it. The interior
of the disc will contain three simply decorated points. So $\mathbf{D}_\Tri$ is specified by $\mathbf{D}=\operatorname{Bl}^\R_{\infty}\C\PP^1$, $b=1$ and $|\MM|=5$, and $\Delta=\{u_1,u_2,u_3\}$ with $\mathbf{w}=(1,1,1)$.
Figure \ref{decorated_flip} gives a pictorial explanation of
the second part of Theorem \ref{thm_iso_EG},
relating $\EG(\calD_3(A_2))/\sph(\cl H_0)$ and $\EG(\mathbf{D},\MM)$.
Compare it with the notion of forward flip from
Figure \ref{fig_decorated_triang_A2}.

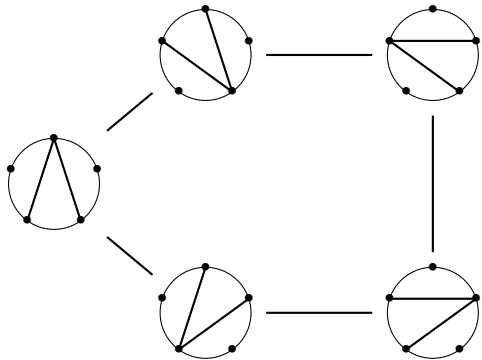
\begin{figure}[ht]
    \centering{
        \begin{tikzpicture}

            \begin{scope}
                \draw (0,0) circle (.6cm);
                \foreach \j in {0,...,4}{\draw (90+72*\j:.6cm) coordinate  (w\j);}
                \draw[thick] (w0) -- (w3);
                \draw[thick] (w0) -- (w2);
                \foreach \j in {0,...,4}{\draw($(w\j)$)\nn;}
            \end{scope}

            \begin{scope}[shift={(2,1.7)}]
                \draw (0,0) circle (.6cm);
                \foreach \j in {0,...,4}{\draw (90+72*\j:.6cm) coordinate  (w\j);}
                \draw[thick] (w0) -- (w3);
                \draw[thick] (w1) -- (w3);
                \foreach \j in {0,...,4}{\draw($(w\j)$)\nn;}
            \end{scope}

            \begin{scope}[shift={(2,-1.7)}]
                \draw (0,0) circle (.6cm);
                \foreach \j in {0,...,4}{\draw (90+72*\j:.6cm) coordinate  (w\j);}
                \draw[thick] (w4) -- (w2);
                \draw[thick] (w0) -- (w2);
                \foreach \j in {0,...,4}{\draw($(w\j)$)\nn;}
            \end{scope}

            \begin{scope}[shift={(5,1.7)}]
                \draw (0,0) circle (.6cm);
                \foreach \j in {0,...,4}{\draw (90+72*\j:.6cm) coordinate  (w\j);}
                \draw[thick] (w1) -- (w4);
                \draw[thick] (w1) -- (w3);
                \foreach \j in {0,...,4}{\draw($(w\j)$)\nn;}
            \end{scope}

            \begin{scope}[shift={(5,-1.7)}]
                \draw (0,0) circle (.6cm);
                \foreach \j in {0,...,4}{\draw (90+72*\j:.6cm) coordinate  (w\j);}
                \draw[thick] (w1) -- (w4);
                \draw[thick] (w2) -- (w4);
                \foreach \j in {0,...,4}{\draw($(w\j)$)\nn;}
            \end{scope}

            \draw[thick] (.7,.7) -- (1.3,1.2);
            \draw[thick] (.7,-.7) -- (1.3,-1.2);
            \draw[thick] (2.8,1.7) -- (4.2,1.7);
            \draw[thick] (2.8,-1.7) -- (4.2,-1.7);
            \draw[thick] (5,.9) -- (5,-.9);

    \end{tikzpicture}}
    \caption{Un-decorated triangulations and flips of the disc with five marked points.}
    \label{decorated_flip}
\end{figure}

A quadratic differential that induces the decorated marked surface $\mathbf{D}_\Tri$
with the procedure described in Subsection \ref{sec:prelim} is a quadratic differential on the Riemann sphere $\C\PP^1$ with one pole of order $3$ and three single zeroes. In a co-ordinate $z$ centred in $0$, it therefore has the  form
\[\Psi(z)=(z-u_1)(z-u_2)(z-u_3)dz\otimes dz\]
for three distinct points $u_1,u_2,u_3$ on $\C$. Here
the pole is fixed at $\infty\in\C\PP^1$. As the triple $(u_1,u_2,u_3)$ varies in $\C^3$ we get different quadratic differentials of the same form. The condition for
the zeroes to be distinct can be reformulated as
$\prod_{i<j}(u_i-u_j)\neq 0$. Of course, the result
is independent on permutations of $u_1,u_2,u_3$ so that the
parameter space will be quotiented by the symmetric group $\Sigma_3$. Last we can translate the triple and assume that the centre of mass of these points
is the origin, i.e., $u_1+u_2+u_3=0$. The meromorphic
quadratic differential $\Psi$ is equivalently specified by two
parameters $a=(u_1u_2+u_2u_3+u_3u_1)$ and $b=-u_1u_2u_3$:
\[\Psi_{a,b}(z)=(z^3+az+b)dz\otimes dz\]
for $4a^3+27b^2\neq 0$. See \cite{ikeda} and
\cite[Section 12.1]{BS15} for a precise description of
the relevant space of quadratic differentials. Theorem
\ref{thm:BS15_iso} becomes the following
statement (Theorem \ref{BSforA2}).

\begin{theorem}\label{BSforA2}The connected component $\stab^\circ(\calD_3(A_2))$ is isomorphic to the universal cover
    $\widetilde{\cl M_3}$ of the configuration space
    \[\cl M_3:=\{(a,b)\in\C^2\,|\, 4a^3+27b^2\neq 0\}\,,\]
    i.e.,
    \[\stab^\circ(\calD_3(A_2))\simeq \widetilde{\cl M_3} \simeq \FQuad^\circ\left(\surf_\Tri\right)\,.\]
\end{theorem}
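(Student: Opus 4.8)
\emph{Proof proposal.} The plan is to derive the theorem from the Bridgeland--Smith correspondence, Theorem~\ref{thm:BS15_iso}, the only non-formal point being the identification of the relevant space of framed quadratic differentials with $\widetilde{\cl M_3}$. First I would record that $\calD_3(A_2)=\calD_3(\mathbf{D}_\Tri)$, where $\mathbf{D}_\Tri$ is the simply decorated disc with $g=0$, $b=1$, $|\MM|=5$, $\Delta=\{u_1,u_2,u_3\}$ and $\w\equiv(1,1,1)$: this tuple satisfies the compatibility relation~\eqref{eq:cp1}, since $3-(5+2)=-4=4\cdot 0-4$, and the two-arc triangulation drawn in Figures~\ref{fig_decorated_triang_A2} and~\ref{decorated_flip} has, by Definition~\ref{quiver_from_mixedangulation}, dual quiver with potential $(A_2,0)$, so the associated Ginzburg $\CY_3$ category is $\calD_3(A_2)$. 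Feeding $\surf_\Tri=\mathbf{D}_\Tri$ into the commutative diagram~\eqref{BSafterKQ} then already produces the isomorphism of complex manifolds
\[
\stab^\circ(\calD_3(A_2))\;\simeq\;\FQuad^\circ(\mathbf{D}_\Tri),
\]
equivariant for the actions of $\MCG(\mathbf{D}_\Tri)$ and $\mathpzc{Aut}^\circ(\calD_3(A_2))$; this is already the second isomorphism in the statement.

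It remains to identify $\FQuad^\circ(\mathbf{D}_\Tri)$ with $\widetilde{\cl M_3}$, and here I would use the explicit picture recalled above. The pole of the induced quadratic differential has order $m_1=M_1+2=7$, and, after moving the pole to $\infty$ and centring and normalising the three zeros, every point of $\Quad_0(1^3,-7)$ is represented by $\Psi_{a,b}(z)=(z^3+az+b)\,dz^{\otimes 2}$, whose zeros are the roots of $z^3+az+b$; these are simple precisely when $4a^3+27b^2\neq 0$, and the elementary symmetric functions identify $\{(u_1,u_2,u_3)\mid \sum u_i=0,\ \text{distinct}\}/\Sigma_3$ with $\cl M_3$, so $\Quad_0(1^3,-7)\cong\cl M_3$, which is connected. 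Consequently, forgetting the Teichm\"uller framing exhibits $\FQuad^\circ(\mathbf{D}_\Tri)$ as a connected covering space of $\cl M_3$ with deck group (the image of) $\MCG(\mathbf{D}_\Tri)$; equivalently, \eqref{kbar} specialises to $\stab^\circ(\calD_3(A_2))/\mathpzc{Aut}^\circ(\calD_3(A_2))\cong\cl M_3$.

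The remaining --- and main --- point is that this covering is the \emph{universal} one, equivalently that $\FQuad^\circ(\mathbf{D}_\Tri)$, hence by the first paragraph $\stab^\circ(\calD_3(A_2))$, is simply connected; granting this, $\stab^\circ(\calD_3(A_2))\simeq\FQuad^\circ(\mathbf{D}_\Tri)\simeq\widetilde{\cl M_3}$, which (since $\surf_\Tri=\mathbf{D}_\Tri$ here) is the full chain of isomorphisms in the statement. I expect the simple connectedness to be the hard part. I would approach it intrinsically: by Corollary~\ref{cor_tame} one has $\stab^\circ(\calD_3(A_2))=\C\cdot\bigcup_{\cl H}\stab\cl H$ over the reachable finite hearts, every bounded t-structure on $\calD_3(A_2)$ is finite so each chart $\stab\cl H$ is isomorphic to $\halfplane^2$, the exchange graph $\EG^\circ(\calD_3(A_2))$ has the pentagon~\eqref{pentagon_hearts} as a fundamental domain for $\sph(\calD_3(A_2))$, and Proposition~\ref{prop:chamberStab} records how the charts meet along their codimension-one walls; combining these --- in the spirit of Woolf's analysis of $\Stab$ via tilting~\cite{woolf1} --- one reduces the simple connectedness of $\stab^\circ(\calD_3(A_2))$ to the (pentagonal) combinatorics of $\EG^\circ(\calD_3(A_2))$ together with its glueing data, which one checks assembles into a contractible space. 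Alternatively one may invoke the known contractibility of $\stab^\circ$ for $\CY_3$ categories of Dynkin type $A$, see e.g.\ \cite{ikeda,QYW}. Transporting this across $K$ (which intertwines the $\MCG(\mathbf{D}_\Tri)$- and $\mathpzc{Aut}^\circ(\calD_3(A_2))$-actions) shows $\FQuad^\circ(\mathbf{D}_\Tri)$ is simply connected, hence equals the universal cover $\widetilde{\cl M_3}$ of $\cl M_3$, completing the proof.
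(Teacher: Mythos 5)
Your overall architecture is sound, and the first two thirds of the argument (identifying $\calD_3(A_2)$ with $\calD_3(\mathbf{D}_\Tri)$, specialising Theorem \ref{thm:BS15_iso} to obtain $\stab^\circ(\calD_3(A_2))\simeq\FQuad^\circ(\mathbf{D}_\Tri)$, and identifying $\Quad_0(1^3,-7)$ with $\cl M_3$ via the normalised cubic $z^3+az+b$) agree with the text; your pole order $m_1=M_1+2=7$ is indeed the correct one. Where you genuinely diverge from the paper is in the final, and only hard, step. The paper (following Ikeda) first constructs an isomorphism from $\cl M_3$ to the quotient $\stab^\circ(\calD_3(A_2))/\sph(\calD_3(A_2))$ and then lifts it, using that the deck group $\sph(\calD_3(A_2))$ of the covering $\stab^\circ\to\stab^\circ/\sph$ and $\pi_1(\cl M_3)$ are both the braid group $B_3$, so that the covering must be the universal one. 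You instead aim to prove directly that $\stab^\circ(\calD_3(A_2))$ is simply connected and deduce that $\FQuad^\circ(\mathbf{D}_\Tri)\to\cl M_3$ is the universal covering. Given the quotient isomorphism the two are equivalent, but they place the burden in different places: the paper's route needs the identification of $\sph(\calD_3(A_2))$ with $B_3$ (faithfulness of the braid group action), yours needs simple connectedness of the stability space.

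The difficulty is that you do not actually establish that step. Your ``intrinsic'' argument --- gluing the charts $\stab(\cl H)\simeq\halfplane^2$ along the walls of Proposition \ref{prop:chamberStab} according to the pentagon \eqref{pentagon_hearts} and ``checking that this assembles into a contractible space'' --- is the hard content of the theorem rather than a reduction of it: the pentagon is only a fundamental domain for the $\sph$-action, the full exchange graph is infinite, and controlling the fundamental group of an infinite union of closed chambers is exactly what requires the machinery of \cite{QYW} or Ikeda's period-map argument. Moreover, of your two fallback citations, \cite{ikeda} is circular in the paper's own logical ordering: the text derives contractibility of $\stab^\circ(\calD_3(A_2))$ as a corollary of Theorem \ref{BSforA2}, via \cite[Theorem 7.13]{ikeda}, and Ikeda himself obtains contractibility from the isomorphism with $\widetilde{\cl M_3}$. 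Citing \cite{QYW}, whose contractibility proof is independent of the quadratic-differential picture, does close the gap and yields a valid (if different) proof; for a self-contained argument you should instead follow the paper's route and show that the surjection $\pi_1(\cl M_3)\twoheadrightarrow\sph(\calD_3(A_2))$ induced by the quotient covering is an isomorphism of braid groups.
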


The isomorphism,
specialised here to $n=2$, $N=3$ from the paper \cite{ikeda} by Ikeda,
is first constructed from the space $\cl M_3$
to the quotient~$\stab^\circ(\calD_3(A_2))/\operatorname{sph}(\calD_3(A_2))$, and then lifted using that $\pi_1(\cl M_3)\simeq\operatorname{sph}(\calD_3(A_2))$ both coincide with a braid group. In fact, there are several ways of computing
$\stab(\calD_3(A_2))$ (see \cite{toms, kleinian}
\cite{ikeda,BS15} for details), which also apply to other Calabi--Yau dimensions and other quivers, e.g., \cite{caitlin}.
A fundamental domain of $\stab^\circ(\calD_3)$ with respect
to the action of the spherical twist group consists of
stability conditions supported on the finite hearts appearing
in \eqref{pentagon_hearts}. The projection of the forgetful map from this fundamental domain to  $\R^2$ coordinatised by
the purely imaginary part of the central charge of $S_1$ and $S_2$ is given in
Figure \ref{R2pentagon}.
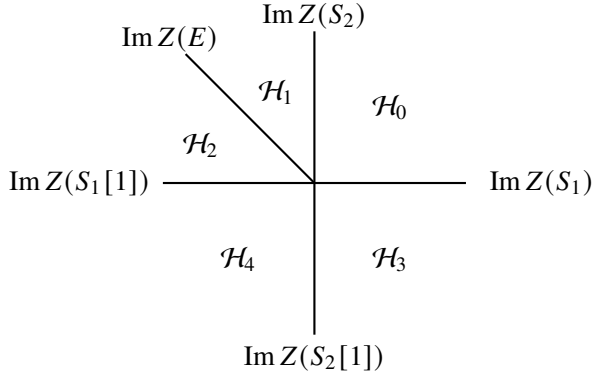
\begin{figure}[ht]
    \centering{
        \begin{tikzpicture}
            \draw[thick,] (-2,0) -- (2,0);
            \draw[thick,] (0,-2) -- (0,2);
            \draw[thick,] (0,0) -- (-1.7,1.7);
            \draw(1,1) node{$\cl H_0$};
            \draw(1,-1) node{$\cl H_3$};
            \draw(-1,-1) node{$\cl H_4$};
            \draw(-0.5,1.2) node{$\cl H_1$};
            \draw(-1.5,.5) node{$\cl H_2$};
            \draw(0,2.2) node{$\Im Z(S_2)$};
            \draw(3,0) node{$\Im Z(S_1)$};
            \draw(-1.9,1.9) node{$\Im Z(E)$};
            \draw(0,-2.3) node{$\Im Z(S_2[1])$};
            \draw(-3.1,0) node{$\Im Z(S_1[1])$};			
    \end{tikzpicture}}
    \caption{The projection of the forgetful map from $\stab(\calD_3(A_2))/\operatorname{sph}(\calD_3(A_2))$ on the $\R^2$ plane with coordinates
        the purely imaginary part of the central charge of $S_1$ and $S_2$. It represents (the projection of) five chambers of $\Stab(\calD_3(A_2))/\operatorname{sph}(\calD_3(A_2))$ and of their walls.}
    \label{R2pentagon}
\end{figure}

A straight corollary of Theorem \ref{BSforA2}
is that the connected component~
$\stab^\circ(\calD_3(A_2))$, as a topological space, is
contractible, \cite[Theorem 7.13]{ikeda}.

\subsection{Generalisations}

Some generalisations of the original Bridgeland--Smith
correspondence \eqref{kgamma} exist in the literature. They concern Ginzburg
categories of Calabi--Yau dimension greater than $3$, categories from non-simply weighted
decorated marked surfaces, and Fukaya
categories from flat surfaces.

The first is due to Ikeda \cite{ikeda} for the triangulated
categories $\pvd\Gamma_N(A_n)$ for $N\geq 3$. Similarly to
the original Bridgeland--Smith result, it is based on a correspondence
between hearts of bounded t-structures up to the action of
the $N$-spherical twist group and un-decorated $N$-angulations of a
polygon with $(N-2)(n+1)+2$ edges. Simple tilts of hearts
correspond to un-decorated flips of edges of the
$N$-angulation and to cluster mutations in the coloured
exchange graph.
Thanks to the relation between exchange graphs and coloured
exchange graphs in cluster category theory, this approach
seems to be adaptable to other Ginzburg algebras
$\Gamma_N(Q,0)$, $N\geq 3$, such that the corresponding
$N$-cluster category admits a geometric description in
terms of $N$-angulations. I am not aware of further work in
this direction.

In a similar framework, in \cite{BMQS} the hypothesis of simple weights is relaxed and an unpunctured $\surf_\w$ is associated with an
appropriate Verdier localisation $\calD(\surf_\w)$ of a Ginzburg category. A union of connected
components of $\stab(\calD(\surf_\w))$ is described in terms
of quadratic differentials with vanishing order vector $\w\geq \mathbf{1}$.

Haiden and collaborators
\cite{haiden1, haiden2} have considered quadratic differentials with exponential-type
singularities and with only simple poles and zeroes. In the
latter case a theory of counting finite-length geodesics is
initiated from a Donaldson--Thomas theory enumerating
semistable objects.
The involved categories are Fukaya
categories of surfaces with boundaries, whose objects
correspond to
a suitable collection of arcs. They are relevant in the
context of mirror symmetry, and do not come from
quivers with potential.
\medskip
\par
These constructions provide additional
examples of the relation
between central charges in the theory of stability
condition and period maps, and show that this is  not
limited to the $\CY_3$ setup.


\begin{ack}
The author thanks first and foremost the scientific committee and the organisers of ICRA 2022 for the invitation to deliver a mini-course on the subject of this article and for providing a friendly and inspiring atmosphere. Thanks are also due to Lidia Angeleri, Rosie Laking, Sergio Pavon, Matthew Pressland, Hipolito Treffinger, Jorge Vitoria for related discussions at ICRA and beyond.
\end{ack}

\begin{funding}
This work was partially supported by the
project \emph{REDCOM: Reducing complexity in algebra, logic, combinatorics}, financed by the programme \emph{Ricerca Scientifica
di Eccellenza 2018} of the Fondazione Cariverona.
\end{funding}


\end{document}